\def\th@plain{%
  \upshape 
}
\renewenvironment{proof}[1][\proofname]{\par
  \pushQED{\qed}%
  \normalfont \topsep6\p@\@plus6\p@\relax
  \trivlist
  \item[\hskip\labelsep
        \bfseries
    #1\@addpunct{.}]\ignorespaces
}{%
  \popQED\endtrivlist\@endpefalse
}
\newtheorem{theorem}{Theorem}[section]
\newtheorem{lemma}[theorem]{Lemma}
\newtheorem{corollary}[theorem]{Corollary}
\newtheorem*{conjecture*}{Conjecture}
\newtheorem{claim}{Claim}
\theoremstyle{definition}
\newcommand{\etal}{et~al.\ }
\newcommand{\ie}{i.e.,\ }
\tikzset{
  on each segment/.style={
    decorate,
    decoration={
      show path construction,
      moveto code={},
      lineto code={
        \path [#1]
        (\tikzinputsegmentfirst) -- (\tikzinputsegmentlast);
      },
      curveto code={
        \path [#1] (\tikzinputsegmentfirst)
        .. controls
        (\tikzinputsegmentsupporta) and (\tikzinputsegmentsupportb)
        ..
        (\tikzinputsegmentlast);
      },
      closepath code={
        \path [#1]
        (\tikzinputsegmentfirst) -- (\tikzinputsegmentlast);
      },
    },
  },
  mid arrow/.style={postaction={decorate,decoration={
        markings,
        mark=at position .6 with {\arrow[#1]{stealth}}
      }}},
}
\begin{document}

\title{Decomposition of planar graphs with forbidden configurations}
\author{Lingxi Li\thanks{School of Mathematics and Statistics, Henan University, Kaifeng, 475004, P. R. China}\and Huajing Lu\thanks{College of Basic Science, Ningbo University of Finance and Economics, Ningbo, 315000, P. R. China}\and Tao Wang\thanks{Center for Applied Mathematics, Henan University, Kaifeng, 475004, P. R. China. {\tt Email: wangtao@henu.edu.cn}}\and Xuding Zhu\thanks{School of Mathematical Sciences, Zhejiang Normal University, Jinhua, 321004, P. R. China. This research is supported by Grants: NSFC 11971438, U20A2068.}
}
\date{}
\maketitle

\begin{abstract}
A $(d,h)$-decomposition of a graph $G$ is an ordered pair $(D, H)$ such that $H$ is a subgraph of $G$ of maximum degree at most $h$ and $D$ is an acyclic orientation of $G-E(H)$ with maximum out-degree at most $d$. In this paper, we prove that for $l \in \{5, 6, 7, 8, 9\}$, every planar graph without $4$- and $l$-cycles is $(2,1)$-decomposable. As a consequence, for every planar graph $G$ without $4$- and $l$-cycles, there exists a matching $M$, such that $G - M$ is $3$-DP-colorable and has Alon-Tarsi number at most $3$. In particular, $G$ is $1$-defective $3$-DP-colorable, $1$-defective $3$-paintable and 1-defective 3-choosable. These strengthen the results in [Discrete Appl. Math. 157~(2) (2009) 433--436] and [Discrete Math. 343 (2020) 111797].

Keywords: decomposition; list coloring; defective coloring; Alon-Tarsi number; DP-coloring
\end{abstract}

\section{Introduction}

A \emph{proper $k$-coloring} of a graph $G$ is a mapping $\phi: V(G) \rightarrow [k]$ such that $\phi(u) \neq \phi(v)$, whenever $uv\in E(G)$, where and herein after, $[k] = \{1, 2, \dots, k\}$. The least integer $k$ such that $G$ admits a proper $k$-coloring is the \emph{chromatic number} $\chi(G)$ of $G$. Let $h$ be a non-negative integer. An \emph{$h$-defective $k$-coloring} of $G$ is a mapping $\phi: V(G)\rightarrow [k]$ such that each color class induces a subgraph of maximum degree at most $h$. In particular, a $0$-defective coloring is a proper coloring of $G$.

A \emph{$k$-list assignment} of $G$ is a mapping $L$ that assigns a list $L(v)$ of $k$ colors to each vertex $v$ in $G$. An \emph{$h$-defective $L$-coloring} of $G$ is an $h$-defective coloring $\psi$ of $G$ such that $\psi(v)\in L(v)$ for all $v\in V(G)$. A graph $G$ is \emph{$h$-defective $k$-choosable} if $G$ admits an $h$-defective $L$-coloring for each $k$-list assignment $L$. In particular, if $G$ is $0$-defective $k$-choosable, then we call it \emph{$k$-choosable}. The \emph{choice number} $ch(G)$ is the smallest integer $k$ such that $G$ is $k$-choosable.

Cowen, Cowen, and Woodall~\cite{MR890224} proved that every outerplanar graph is $2$-defective $2$-colorable, and every planar graph is $2$-defective $3$-colorable. Eaton and Hull~\cite{MR1668108}, and independently, \v{S}krekovski~\cite{MR1702609} proved that every outerplanar graph is $2$-defective $2$-choosable, and every planar graph is $2$-defective $3$-choosable. Cushing and Kierstead~\cite{MR2644426} proved that every planar graph is $1$-defective $4$-choosable. 
Let $\mathcal{G}_{4, l}$ be the family of planar graphs which contain no $4$-cycles and no $l$-cycles. Lih \etal~\cite{MR1820611} proved that for each $l \in \{5, 6, 7\}$, every graph $G \in \mathcal{G}_{4, l}$ is $1$-defective $3$-choosable. Dong and Xu~\cite{MR2479819} proved that for each $l \in \{8, 9\}$, every graph $G \in \mathcal{G}_{4, l}$ is $1$-defective $3$-choosable.

Note that a graph being $h$-defective $k$-choosable means that for every $k$-list assignment $L$ of $G$, there exists a subgraph $H$ (depending on $L$) of $G$ with $\Delta(H) \le h$ such that $G-E(H)$ is $L$-colorable. The subgraph $H$ may be different for different $L$. 
As a strengthening of the above results, the following problem is studied in the literature: For $(h,k) \in \{(2,3), (1,4)\}$, is it true that every planar graph $G$ has a subgraph of maximum degree $h$ such that $G-E(H)$ is $k$-choosable? For $l \in \{5,6,7,8,9\}$, is it true that every graph $G \in \mathcal{G}_{4, l}$ has a matching $M$ such that $G-M$ is $3$-choosable?

It turns out that for the first question, the answer is negative for $(h,k)=(2,3)$, and positive for $(h,k)=(1,4)$. It was proved in~\cite{Kim2019a} that there exists a planar graph $G$ such that for any subgraph $H$ of $G$ of maximum degree $3$, $G - E(H)$ is not $3$-choosable, and proved in~\cite{MR4152773} that every planar graph $G$ has a matching $M$ such that $G-M$ is $4$-choosable. For the second question, for $l \in \{5,6,7\}$, it was shown in~\cite{MR4051856} every graph $G \in \mathcal{G}_{4, l}$ has a matching $M$ such that $G-M$ is $3$-choosable. 

Indeed, stronger results were proved in~\cite{MR4152773,MR4051856}. The results concern two other graph parameters: The \emph{Alon-Tarsi number} $AT(G)$ of $G$ and the \emph{paint number} $\chi_P(G)$ of $G$. The reader is referred to~\cite{MR4152773} for the definitions. We just note here that for any graph $G$, $ch(G) \le \chi_P(G) \le AT(G)$, and the differences $\chi_P(G)-ch(G)$ and $AT(G) - \chi_P(G)$ can be arbitrarily large. It was proved in~\cite{MR4152773} that every planar graph $G$ has a matching $M$ such that $AT(G-M) \le 4$, and proved in~\cite{MR4051856} that for $l \in \{5,6,7\}$, every graph $G \in \mathcal{G}_{4, l}$ has a matching $M$ such that $AT(G-M) \le 3$. 

In this paper, we consider further strengthening of the results concerning graphs in $\mathcal{G}_{4, l}$ for $l \in \{5,6,7,8,9\}$. (Note that the result in~\cite{MR4051856} does not cover the cases for $l=8$ and $9$). We strengthen the above results in two aspects: a larger class of graphs with a stronger property.

Given two non-negative integers $d, h$ and a graph $G$, a 
\emph{$(d,h)$-decomposition} of $G$ is a pair $(D, H)$ such that $H$ is a subgraph of $G$ of maximum degree at most $h$ and $D$ is an acyclic orientation of $G-E(H)$ with maximum out-degree at most $d$. We say $G$ is \emph{$(d,h)$-decomposable} if $G$ has a $(d,h)$-decomposition. Cho \etal~\cite{MR4472765} proved that every planar graph is $(4, 1)$-decomposable, $(3, 2)$-decomposable and $(2, 6)$-decomposable. Note that a graph $H$ which has an acyclic orientation of maximum out-degree at most $d$ if and only if $H$ is \emph{$d$-degenerate}, i.e., the vertices of $H$ can be linearly ordered so that each vertex has at most $d$ backward neighbors.
It is well-known and easy to see that $d$-degenerate graphs not only have choice number, paint number, Alon-Tarsi number and DP-chromatic number at most $d+1$, there is a linear time algorithm that creates the above mentioned linear ordering and the corresponding coloring is easily obtained by using a greedy coloring algorithm. 
 The reader is referred to~\cite{MR3758240} for the definition of DP-chromatic number $\chi_{DP}(G)$ of a graph $G$. We just mention here that $ch(G) \le \chi_{DP}(G)$, and there are graphs $G$ for which $\chi_{DP}(G)$ are larger than each of $AT(G)$ and $\chi_P(G)$, there are also graphs $G$ for which $\chi_{DP}(G)$ are smaller than each of $AT(G)$ and $\chi_P(G)$~\cite{MR4117373}. This paper proves the following result:
 
  \begin{theorem}\label{thm-main}
  	Assume $G$ is a plane graph. Then $G$ is $(2,1)$-decomposable if one of the following holds:
  	\begin{enumerate}
  		\item[(1)] $G$ has no subgraph isomorphic to any configuration in \autoref{COMMONFIGURE} and \autoref{FIGURE-AT567}.
  		\item[(2)] $G$ has no subgraph isomorphic to any configuration in \autoref{COMMONFIGURE} and \autoref{FIGURE-AT48}.
  		\item[(3)] $G \in \mathcal{G}_{4, 9}$.
  	\end{enumerate}
  \end{theorem}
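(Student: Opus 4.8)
The plan is to prove all three parts of \autoref{thm-main} via a single discharging argument, treating the three families of forbidden configurations in a unified way so that the reducibility analysis is shared. Suppose for contradiction that $G$ is a counterexample with $|V(G)|+|E(G)|$ minimum, where in case (3) we additionally forbid $4$- and $9$-cycles. The first step is to establish structural lemmas characterizing reducible configurations: a configuration $C$ is reducible if, whenever $G$ contains $C$, one can find a smaller plane graph $G'$ (obtained by deleting a vertex or contracting/deleting edges, and still in the relevant class) so that a $(2,1)$-decomposition $(D',H')$ of $G'$ extends to a $(2,1)$-decomposition of $G$. The key technical point for the extension is that adding back a vertex $v$ of degree at most $5$: if $v$ has at most two neighbors we can orient all edges out of $v$; if $v$ has three neighbors we orient two out and the third becomes part of $H$ (legal since $H$ has max degree $1$, so we need the endpoint to be unsaturated in $H'$); if $v$ has four or five neighbors we need one or two of them to already have out-edges we can "reverse" or we need specific neighbors to be $H'$-unsaturated, which is exactly what the forbidden configurations guarantee. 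Acyclicity is maintained by always inserting $v$ last in the topological order (or by a careful local argument when contracting).

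The second step is the discharging setup. Assign to each vertex $v$ the charge $d(v)-4$ and to each face $f$ the charge $d(f)-4$; by Euler's formula the total charge is $-8$. Because $G$ has no $4$-cycles, it has no $4$-faces, and (for the relevant girth-type restrictions coming from the forbidden $l$-cycle or the configuration lists) $3$-faces are sparse and $5$-faces are controlled. The discharging rules will send charge from faces of size $\ge 6$ and from high-degree vertices toward $3$-vertices, $3$-faces, and (in the $l \in \{5,\dots,9\}$ cases) toward the bad short faces. One then shows that after discharging every vertex and face has nonnegative charge unless $G$ contains one of the forbidden configurations — contradicting the total being $-8$. Because the three parts of the theorem correspond to three different "forbidden lists," the discharging rules are calibrated slightly differently in each case, but the reducible configurations (hence the targets of discharging) are largely the same; the role of \autoref{COMMONFIGURE} is precisely to be the common core, while \autoref{FIGURE-AT567}, \autoref{FIGURE-AT48}, and the structural consequences of $\mathcal{G}_{4,9}$ handle the residual positive-charge faces specific to each case.

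The main obstacle I expect is the extension argument for vertices of degree $4$ and $5$ together with maintaining acyclicity under edge contractions. Unlike degeneracy-type arguments, here we must simultaneously control out-degree ($\le 2$) \emph{and} the matching $H$ ($\Delta \le 1$) \emph{and} acyclicity. When we contract a configuration (for instance identifying two vertices across a short path, a standard trick to force a needed non-edge or to reduce a separating structure), the induced orientation on $G'$ may create a directed cycle when lifted back, so the configurations must be chosen so that the contracted vertices lie on a common face and the reinserted edges can be oriented consistently with a fixed topological order. Equivalently, one proves an auxiliary lemma: if $(D',H')$ is a $(2,1)$-decomposition of $G'$ and $v$ is a vertex reinserted with its at most five incident edges, then one can orient/assign these edges so that $v$ is a source in the new acyclic order, provided the "budget" ($5 - $ (number of forced out-edges) $-$ (number of $H$-usable neighbors)) is nonnegative — and then the discharging is designed exactly so that a vertex or face violating this budget constitutes one of the listed configurations.

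Secondary obstacles are (i) handling $3$-faces and adjacent $3$-faces, since $4$-cycle-freeness forbids two $3$-faces sharing an edge but not two $3$-faces sharing a vertex, so "triangle clusters" must be analyzed; and (ii) verifying that the reductions stay inside the class — deleting a vertex or edge cannot create a $4$- or $l$-cycle, but contraction can, so in case (3) and in the configuration cases the contractions must be restricted to situations where no new short cycle appears, which again feeds back into the precise shape of the forbidden configurations.
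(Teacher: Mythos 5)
Your high-level template (minimal counterexample, reducible configurations, discharging) is the same genre as the paper's argument, but the proposal misses the paper's central device and substitutes a reduction mechanism that does not work. The paper does not induct on the bare statement of \autoref{thm-main}: it proves the stronger \autoref{HP}, that for any boundary edge $e=xy$ the pair $(G,e)$ has a \emph{nice decomposition} $(D,M)$ with $M$ a matching containing $xy$, $D$ an acyclic orientation of $G-M$ with $\Delta^{+}(D)\le 2$ and $d_{D}^{+}(x)=d_{D}^{+}(y)=0$. This strengthening is what makes everything else go: the cut-vertex reduction glues two nice decompositions using the sink condition at the boundary edge; all structural lemmas (minimum degree, no adjacent normal $3$-vertices, bad $5$-cycles, triangle chains, the $6$-face lemma) hold only away from the special vertices $x,y$; and the discharging is set up so that the outer face gets charge $d(f_{0})+4$, making the initial total $0$ rather than $-8$, with $x$, $y$ allowed to finish negative (about $-7/3$ each) and compensated by the boosted outer face, the contradiction being that the final total is strictly positive. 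Your plan — minimal counterexample to the plain statement, initial total $-8$, and a claim that every vertex and face can be pushed to nonnegative final charge — has no mechanism for absorbing the global deficit and no special boundary structure; nothing in the sketch indicates how the discharging would be closed, and the paper's choice to carry a distinguished boundary edge through the induction is exactly the missing idea.

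The reducibility mechanism you describe is also not adequate. The paper never contracts and never reverses edges of the inherited orientation: each reduction deletes an entire configuration (a $2^{-}$-vertex, two adjacent $3$-vertices, a bad $5$-cycle together with its triangle, a minor triangle plus a triangle chain and a nearby $3$-vertex, a $6$-face surrounded by five triangles), places a perfect matching of the deleted set into $M$, and orients every remaining deleted edge out of the deleted vertices, so no surviving vertex gains out-degree and acyclicity is immediate. Your ``budget'' lemma for reinserting a single vertex $v$ of degree $4$ or $5$ fails as stated: since $\Delta(H)\le 1$, at most one edge at $v$ can be put into the matching, so a degree-$4$ vertex forces an edge oriented into $v$ from an old vertex (raising that vertex's out-degree and threatening a directed cycle), and ``reversing'' existing out-edges of an acyclic orientation can itself create directed cycles — you only gesture at this. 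This is precisely why the paper's reducible configurations are multi-vertex structures made of $3$- and $4$-vertices, and why the forbidden configurations of \autoref{COMMONFIGURE}, \autoref{FIGURE-AT567} and \autoref{FIGURE-AT48} enter through the charge analysis of internal $5$-, $6$- and $7$-faces (controlling how many $3$-faces and minor $3$-vertices such faces can see), not through per-vertex degree budgets; the contraction difficulties you anticipate simply never arise.
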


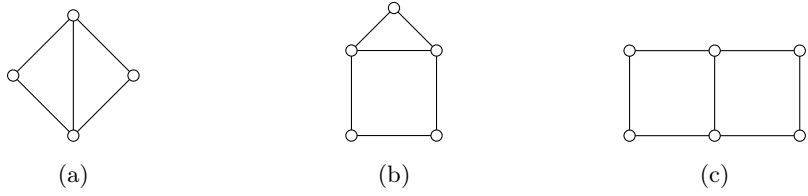
\begin{figure}
\centering
\subcaptionbox{\label{CommonA}}[0.25\linewidth]{\begin{tikzpicture}[scale=0.8]
\def\s{1}
\coordinate (E) at (\s, 0);
\coordinate (N) at (0,\s);
\coordinate (W) at (-\s,0);
\coordinate (S) at (0,-\s);
\draw (E)--(N)--(W)--(S)--cycle;
\draw (N)--(S);
\node[circle, inner sep = 1.5, fill = white, draw] () at (E) {};
\node[circle, inner sep = 1.5, fill = white, draw] () at (N) {};
\node[circle, inner sep = 1.5, fill = white, draw] () at (W) {};
\node[circle, inner sep = 1.5, fill = white, draw] () at (S) {};
\end{tikzpicture}}
\subcaptionbox{\label{CommonB}}[0.25\linewidth]{\begin{tikzpicture}[scale=0.8]
\def\s{1}
\coordinate (NE) at (45:\s);
\coordinate (NW) at (135:\s);
\coordinate (SW) at (225:\s);
\coordinate (SE) at (-45:\s);
\coordinate (H) at (90:1.414*\s);
\draw (NE)--(H)--(NW)--(SW)--(SE)--cycle;
\draw (NE)--(NW);
\node[circle, inner sep = 1.5, fill = white, draw] () at (NE) {};
\node[circle, inner sep = 1.5, fill = white, draw] () at (NW) {};
\node[circle, inner sep = 1.5, fill = white, draw] () at (SW) {};
\node[circle, inner sep = 1.5, fill = white, draw] () at (SE) {};
\node[circle, inner sep = 1.5, fill = white, draw] () at (H) {};
\end{tikzpicture}}
\subcaptionbox{\label{CommonC}}[0.25\linewidth]{\begin{tikzpicture}[scale=0.8]
\def\s{1.414}
\coordinate (NE) at (\s, \s);
\coordinate (N) at (0,\s);
\coordinate (NW) at (-\s,\s);
\coordinate (SW) at (-\s, 0);
\coordinate (S) at (0, 0);
\coordinate (SE) at (\s, 0);
\draw (NE)--(NW)--(SW)--(SE)--cycle;
\draw (N)--(S);
\node[circle, inner sep = 1.5, fill = white, draw] () at (NE) {};
\node[circle, inner sep = 1.5, fill = white, draw] () at (N) {};
\node[circle, inner sep = 1.5, fill = white, draw] () at (NW) {};
\node[circle, inner sep = 1.5, fill = white, draw] () at (SW) {};
\node[circle, inner sep = 1.5, fill = white, draw] () at (S) {};
\node[circle, inner sep = 1.5, fill = white, draw] () at (SE) {};
\end{tikzpicture}}
\caption{Forbidden configurations in (1) and (2) of \autoref{thm-main}.}
\label{COMMONFIGURE}
\end{figure}

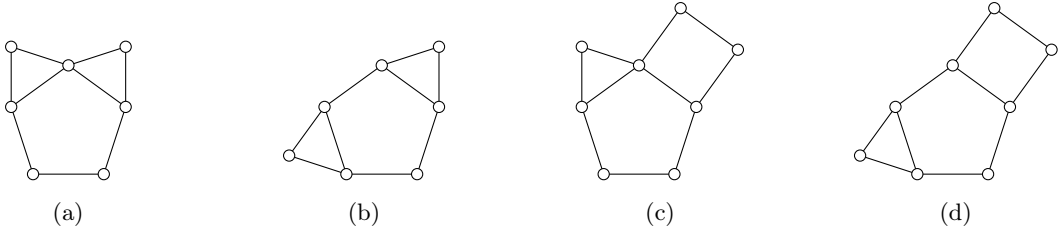
\begin{figure}
\centering
\subcaptionbox{\label{AT345A}}[0.23\linewidth]{\begin{tikzpicture}[scale=0.8]
\def\s{1}
\foreach \ang in {1, 2, 3, 4, 5}
{
\def\pointname{v\ang}
\coordinate (\pointname) at ($(\ang*360/5-54:\s)$);
}
\coordinate (NE) at ($(v1)+(v2)$);
\coordinate (NW) at ($(v2)+(v3)$);
\draw (v1)--(v2)--(v3)--(v4)--(v5)--cycle;
\draw (v1)--(NE)--(v2);
\draw (v2)--(NW)--(v3);
\foreach \ang in {1, 2, 3, 4, 5}
{
\node[circle, inner sep = 1.5, fill = white, draw] () at (v\ang) {};
}
\node[circle, inner sep = 1.5, fill = white, draw] () at (NE) {};
\node[circle, inner sep = 1.5, fill = white, draw] () at (NW) {};
\end{tikzpicture}}
\subcaptionbox{\label{AT345B}}[0.23\linewidth]{\begin{tikzpicture}[scale=0.8]
\def\s{1}
\foreach \ang in {1, 2, 3, 4, 5}
{
\def\pointname{v\ang}
\coordinate (\pointname) at ($(\ang*360/5-54:\s)$);
}
\coordinate (NE) at ($(v1)+(v2)$);
\coordinate (SW) at ($(v3)+(v4)$);
\draw (v1)--(v2)--(v3)--(v4)--(v5)--cycle;
\draw (v1)--(NE)--(v2);
\draw (v3)--(SW)--(v4);
\foreach \ang in {1, 2, 3, 4, 5}
{
\node[circle, inner sep = 1.5, fill = white, draw] () at (v\ang) {};
}
\node[circle, inner sep = 1.5, fill = white, draw] () at (NE) {};
\node[circle, inner sep = 1.5, fill = white, draw] () at (SW) {};
\end{tikzpicture}}
\subcaptionbox{\label{AT345C}}[0.23\linewidth]{\begin{tikzpicture}[scale=0.8]
\def\s{1}
\foreach \ang in {1, 2, 3, 4, 5}
{
\def\pointname{v\ang}
\coordinate (\pointname) at ($(\ang*360/5-54:\s)$);
}
\coordinate (NE2) at ($(v2)!1!90:(v1)$);
\coordinate (NE1) at ($(v1)!1!-90:(v2)$);
\coordinate (NW) at ($(v2)+(v3)$);
\draw (v1)--(v2)--(v3)--(v4)--(v5)--cycle;
\draw (v2)--(NW)--(v3);
\draw (v2)--(NE2)--(NE1)--(v1);
\foreach \ang in {1, 2, 3, 4, 5}
{
\node[circle, inner sep = 1.5, fill = white, draw] () at (v\ang) {};
}
\node[circle, inner sep = 1.5, fill = white, draw] () at (NE1) {};
\node[circle, inner sep = 1.5, fill = white, draw] () at (NE2) {};
\node[circle, inner sep = 1.5, fill = white, draw] () at (NW) {};
\end{tikzpicture}}
\subcaptionbox{\label{AT345D}}[0.23\linewidth]{\begin{tikzpicture}[scale=0.8]
\def\s{1}
\foreach \ang in {1, 2, 3, 4, 5}
{
\def\pointname{v\ang}
\coordinate (\pointname) at ($(\ang*360/5-54:\s)$);
}
\coordinate (NE2) at ($(v2)!1!90:(v1)$);
\coordinate (NE1) at ($(v1)!1!-90:(v2)$);
\coordinate (SW) at ($(v3)+(v4)$);
\draw (v1)--(v2)--(v3)--(v4)--(v5)--cycle;
\draw (v3)--(SW)--(v4);
\draw (v2)--(NE2)--(NE1)--(v1);
\foreach \ang in {1, 2, 3, 4, 5}
{
\node[circle, inner sep = 1.5, fill = white, draw] () at (v\ang) {};
}
\node[circle, inner sep = 1.5, fill = white, draw] () at (NE1) {};
\node[circle, inner sep = 1.5, fill = white, draw] () at (NE2) {};
\node[circle, inner sep = 1.5, fill = white, draw] () at (SW) {};
\end{tikzpicture}}
\caption{Forbidden configurations in (1) of \autoref{thm-main}.}
\label{FIGURE-AT567}
\end{figure}

\begin{figure}
\centering
\subcaptionbox{\label{AT48A}}[0.19\linewidth]{\begin{tikzpicture}[scale=0.8]
\def\s{1.414}
\coordinate (A) at (\s, \s);
\coordinate (B) at (0,\s);
\coordinate (C) at (-\s,\s);
\coordinate (D) at (-\s, 0);
\coordinate (E) at (0, 0);
\coordinate (F) at (\s, 0);
\coordinate (H1) at (0.5*\s, 1.5*\s);
\coordinate (H2) at (-0.5*\s, 1.5*\s);
\draw (A)--(H1)--(B)--(H2)--(C)--(D)--(E)--(E)--(F)--cycle;
\draw (B)--(E);
\node[circle, inner sep = 1.5, fill = white, draw] () at (A) {};
\node[circle, inner sep = 1.5, fill = white, draw] () at (B) {};
\node[circle, inner sep = 1.5, fill = white, draw] () at (C) {};
\node[circle, inner sep = 1.5, fill = white, draw] () at (D) {};
\node[circle, inner sep = 1.5, fill = white, draw] () at (E) {};
\node[circle, inner sep = 1.5, fill = white, draw] () at (F) {};
\node[circle, inner sep = 1.5, fill = white, draw] () at (H1) {};
\node[circle, inner sep = 1.5, fill = white, draw] () at (H2) {};
\end{tikzpicture}}
\subcaptionbox{\label{AT48B}}[0.19\linewidth]{\begin{tikzpicture}[scale=0.8]
\def\s{1.414}
\coordinate (NE) at (0.5*\s, 0.5*\s);
\coordinate (NW) at (-0.5*\s, 0.5*\s);
\coordinate (SW) at (-0.5*\s, -0.5*\s);
\coordinate (SE) at (0.5*\s, -0.5*\s);
\coordinate (H1) at (0, \s);
\coordinate (H2) at (0, 0.6*\s);
\draw (NE)--(H1)--(NW)--(SW)--(SE)--cycle;
\draw (NE)--(H2)--(NW);
\node[circle, inner sep = 1.5, fill = white, draw] () at (NE) {};
\node[circle, inner sep = 1.5, fill = white, draw] () at (NW) {};
\node[circle, inner sep = 1.5, fill = white, draw] () at (SE) {};
\node[circle, inner sep = 1.5, fill = white, draw] () at (SW) {};
\node[circle, inner sep = 1.5, fill = white, draw] () at (H1) {};
\node[circle, inner sep = 1.5, fill = white, draw] () at (H2) {};
\end{tikzpicture}}
\subcaptionbox{\label{AT48C}}[0.19\linewidth]{\begin{tikzpicture}[scale=0.8]
\def\s{1}
\foreach \ang in {1, 2, 3, 4, 5, 6, 7}
{
\def\pointname{v\ang}
\coordinate (\pointname) at ($(\ang*360/7-90:\s)$);
}
\coordinate (W) at ($(v3)!1!-60:(v4)$);
\draw (v1)--(v2)--(v3)--(v4)--(v5)--(v6)--(v7)--cycle;
\draw (v3)--(W)--(v4);
\foreach \ang in {1, 2, 3, 4, 5, 6, 7}
{
\node[circle, inner sep = 1.5, fill = white, draw] () at (v\ang) {};
}
\node[circle, inner sep = 1.5, fill = white, draw] () at (W) {};
\end{tikzpicture}}
\subcaptionbox{\label{AT48D}}[0.19\linewidth]{\begin{tikzpicture}[scale=0.8]
\def\s{1}
\foreach \ang in {1, 2, 3, 4, 5, 6}
{
\def\pointname{v\ang}
\coordinate (\pointname) at ($(\ang*360/6:\s)$);
}
\coordinate (N) at ($(v1)+(v2)$);
\coordinate (SW1) at ($(v3)!1!-90:(v4)$);
\coordinate (SW2) at ($(v4)!1!90:(v3)$);
\draw (v1)--(v2)--(v3)--(v4)--(v5)--(v6)--cycle;
\draw (v1)--(N)--(v2);
\draw (v3)--(SW1)--(SW2)--(v4);
\foreach \ang in {1, 2, 3, 4, 5, 6}
{
\node[circle, inner sep = 1.5, fill = white, draw] () at (v\ang) {};
}
\node[circle, inner sep = 1.5, fill = white, draw] () at (N) {};
\node[circle, inner sep = 1.5, fill = white, draw] () at (SW1) {};
\node[circle, inner sep = 1.5, fill = white, draw] () at (SW2) {};
\end{tikzpicture}}
\subcaptionbox{\label{AT48E}}[0.19\linewidth]{\begin{tikzpicture}[scale=0.8]
\def\s{1}
\foreach \ang in {1, 2, 3, 4, 5, 6}
{
\def\pointname{v\ang}
\coordinate (\pointname) at ($(\ang*360/6:\s)$);
}
\coordinate (W) at ($(v1)+(v2)$);
\coordinate (S1) at ($(v4)!1!-90:(v5)$);
\coordinate (S2) at ($(v5)!1!90:(v4)$);
\draw (v1)--(v2)--(v3)--(v4)--(v5)--(v6)--cycle;
\draw (v1)--(W)--(v2);
\draw (v4)--(S1)--(S2)--(v5);
\foreach \ang in {1, 2, 3, 4, 5, 6}
{
\node[circle, inner sep = 1.5, fill = white, draw] () at (v\ang) {};
}
\node[circle, inner sep = 1.5, fill = white, draw] () at (W) {};
\node[circle, inner sep = 1.5, fill = white, draw] () at (S1) {};
\node[circle, inner sep = 1.5, fill = white, draw] () at (S2) {};
\end{tikzpicture}}
\caption{Forbidden configurations in (2) of \autoref{thm-main}.}
\label{FIGURE-AT48}
\end{figure}

Note that if $G \in \mathcal{G}_{4,l}$ for some $l \in \{5,6,7\}$, then $G$ has no subgraph isomorphic to any configuration in \autoref{COMMONFIGURE} and \autoref{FIGURE-AT567}, and if $G \in \mathcal{G}_{4,8}$, then $G$ has no subgraph isomorphic to any configuration in \autoref{COMMONFIGURE} and \autoref{FIGURE-AT48}. Consequently, for $l \in \{5,6,7,8,9\}$, all graphs $G \in \mathcal{G}_{4,l}$ are $(2,1)$-decomposable.

 
All graphs in this paper are finite and simple. For a plane graph $G$, we use $V(G)$, $E(G)$ and $F(G)$ to denote the vertex set, edge set and face set of $G$, respectively. For any element $x \in V(G)\cup F(G)$, the degree of $x$ is denoted by $d(x)$. A vertex $v$ in $G$ is called a $k$-vertex, or $k^{+}$-vertex, or $k^{-}$-vertex, if $d(v) = k$, or $d(v) \geq k$, or $d(v) \leq k$, respectively. Analogously, one can define $k$-face, $k^{+}$-face, and $k^{-}$-face. An $n$-face $[x_{1}x_{2}\dots x_{n}]$ is a $(d_{1}, d_{2}, \dots, d_{n})$-face if $d(x_{i}) = d_{i}$ for $1 \leq i \leq n$. Let $D$ be an orientation of a graph $G$, we use $d_{D}^{+}(v)$ and $d_{D}^{-}(v)$ to denote the out-degree and in-degree of a vertex $v$ in $D$, respectively. Let $\Delta^{+}(D)$ denote the maximum out-degree of vertices in $D$. Two cycles (or faces) are \emph{adjacent} if they have at least one common edge. Two cycles (or faces) are \emph{normally adjacent} if they intersect in exactly two vertices. Let $G$ be a plane graph and $xy$ be a given boundary edge of $G$. A vertex $v \ne x, y$ is called a \emph{normal vertex}. A vertex $v$ is \emph{special} if $v$ is a $5^{+}$-vertex or $v\in \{x, y\}$. A face is \emph{internal} if it is not the outer face $f_{0}$. A face is \emph{special} if it is an internal $7^{+}$-face or the outer face $f_{0}$. A normal vertex $v$ is \emph{minor} if $d(v)=3$ and it is incident with an internal $4^{-}$-face. A \emph{good $5$-face} is an internal $5$-face adjacent to at least one internal $3$-face. An edge contained in a triangle is a \emph{triangular edge}. Note that in all three cases, there are no adjacent triangles. So every triangular edge is contained in a unique triangle.

\section{Proof of \autoref{thm-main}}

For the purpose of using induction, we prove the following result. Assume $G$ is a plane graph and $e=xy$ is a boundary edge of $G$. A \emph{nice decomposition} of $(G,e)$ is a pair $(D,M)$ such that $M$ is a matching and $D$ is an acyclic orientation of $G - M$ with $d_{D}^{+}(x) = d_{D}^{+}(y) = 0$ and $\Delta^{+}(D) \le 2$. Note that in a nice decomposition $(D,M)$ of $(G,e)$, since $d_{D}^{+}(x) = d_{D}^{+}(y) = 0$, we conclude that $e=xy \in M$. 

\begin{theorem}\label{HP}
If $G$ is a plane graph satisfying the condition of \autoref{thm-main} and $e$ is a boundary edge of $G$, then $(G,e)$ has a nice decomposition. 
\end{theorem}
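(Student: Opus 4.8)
The plan is to prove \autoref{HP} by induction on $|V(G)| + |E(G)|$. The base case is immediate: if $G$ has at most two vertices, then $G \in \{K_1, K_2\}$, and with $e = xy$ the edge of $K_2$ we take $M = \{e\}$ and the empty orientation, which is a nice decomposition. For the inductive step I would fix a counterexample $(G, e)$ with $|V(G)| + |E(G)|$ minimum and derive a contradiction by combining a list of \emph{reducible configurations} with a \emph{discharging argument} on the plane graph $G$, whose outer face I denote $f_0$ and whose distinguished boundary edge is $e = xy$.

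\textbf{Reducibility.} I would first show that the minimal counterexample $G$ avoids a number of local configurations, each handled by the same template: assuming $G$ contains the configuration, delete a small vertex set (occasionally deleting an edge or identifying two vertices instead) to obtain a smaller plane graph $G'$, verify that $G'$ still satisfies the same hypothesis (1), (2), or (3) of \autoref{thm-main} that $G$ does — crucially that the surgery creates no $4$-cycle, no $9$-cycle in case (3), and none of the forbidden configurations of \autoref{COMMONFIGURE}, \autoref{FIGURE-AT567}, \autoref{FIGURE-AT48} — and that $e$ (or a suitable substitute $e'$, when the surgery touches $\{x, y\}$) is still a boundary edge of $G'$. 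Applying the induction hypothesis to $(G', e')$ gives a nice decomposition $(D', M')$, which I would extend to $(G, e)$: the removed vertices are prepended to the acyclic order of $D'$ (so each becomes a source), their edges are oriented outward, and whenever such a vertex would acquire out-degree $3$ one of its edges is instead placed into $M$ — legal precisely because the chosen configuration guarantees that edge's other endpoint is $M'$-unmatched. One then checks that $D$ is acyclic, $\Delta^{+}(D) \le 2$, $M$ is a matching, and $x, y$ remain sinks. This rules out, among others: $G$ being disconnected or having a cut vertex; the existence of a normal $2^{-}$-vertex; and a range of configurations built from $3$-vertices, minor vertices, good $5$-faces, and small faces. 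The requirement that $G'$ stay in the class is exactly why the configurations of the three figures are imposed by hypothesis rather than proved reducible here.

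\textbf{Discharging.} On $G$ I would assign initial charges $\mu(v) = d(v) - 4$ for $v \in V(G)$ and $\mu(f) = d(f) - 4$ for $f \in F(G)$, with a bounded extra amount moved onto $f_0$ and the special vertices on its boundary. Since $|V(G)| - |E(G)| + |F(G)| = 2$ and $\sum_v d(v) = \sum_f d(f) = 2|E(G)|$, the total charge is a fixed negative constant. I would then design discharging rules that move charge from $5^{+}$-faces, special faces, and $5^{+}$-vertices toward the deficient elements (the $3$-faces, $4$-faces, $3$-vertices, minor vertices, and good $5$-faces): e.g.\ each triangular edge carries charge from its — necessarily $5^{+}$- — neighbouring face into the incident $3$-face; each $5$- and $6$-face sends fixed amounts to incident minor/$3$-vertices; and $f_0$ together with the $7^{+}$-faces absorb the deficit near the boundary. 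Finally I would verify, by a case analysis on $d(v)$ and $d(f)$, that every element ends non-negative — a contradiction. The verification leans on: no two $3$-faces, and no $3$-face and $4$-face, are adjacent (\autoref{COMMONFIGURE}), so a small face is only ever taxed across $5^{+}$-faces; a $3$-vertex, being excluded from the pentagon/hexagon patterns of \autoref{FIGURE-AT567} (case (1)) or \autoref{FIGURE-AT48} (case (2)), is incident with few enough small faces that its neighbours and incident large faces can cover it; and the reducibility results supply enough $4^{+}$-vertices and $5^{+}$-faces around every deficient element.

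The main obstacle I anticipate is the global calibration between the reducible configurations and the discharging rules: the figures list many configurations precisely because the charge bookkeeping is tight, and making every vertex and face non-negative typically needs several passes, with the outer face, the edge $e$, and the special vertices $x, y$ requiring separate and delicate treatment. A secondary difficulty is that the same framework must be carried through for all three hypotheses of \autoref{thm-main}: cases (1) and (2) share \autoref{COMMONFIGURE} but then use disjoint families of forbidden configurations in the discharging, and case (3) ($G \in \mathcal{G}_{4,9}$) must be checked to exclude every configuration the argument uses — in particular all of those appearing in the other two cases.
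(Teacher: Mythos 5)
Your overall strategy---minimal counterexample, reducibility of local configurations proved by deleting a small vertex set and extending a nice decomposition of the smaller graph (deleted vertices become sources, a few of their edges go into the matching), followed by discharging with charges $d-4$ in which the outer face and the endpoints of $e$ absorb the deficit---is exactly the approach the paper takes. But as written the proposal has a genuine gap: everything that actually carries the proof is left unspecified. The substance of the paper is a concrete, short list of reducible configurations beyond the trivial ones (the bad $5$-cycle of \autoref{S}, the triangle-chain lemmas \autoref{TC1} and \autoref{TC2}, the $6$-face configuration of \autoref{TC3}, and, for $\mathcal{G}_{4,9}$, the structural lemmas \autoref{TRIANGULAREDGE}--\autoref{GOODFACE} restricting how $5$-, $6$- and $7^{+}$-faces can meet), together with the explicit rules \ref{R1}--\ref{RRR7} and a face-by-face verification that every internal $5^{+}$-face ends non-negative in each of the three cases. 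None of these configurations or rules is produced, or even identified, in your sketch; the ``global calibration'' you flag as an anticipated obstacle is not a technicality to be smoothed out later but is precisely the content of the theorem, so the proposal cannot be credited as a proof.

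Two smaller corrections to the framework itself. First, the hypotheses of \autoref{thm-main} forbid certain subgraphs (and, in case (3), $4$- and $9$-cycles), hence they are closed under taking subgraphs; the paper therefore only ever deletes vertices, and the smaller graph is automatically in the class with $e$ still a boundary edge---vertex identification, which you contemplate, is never needed and is exactly the kind of surgery that could create a forbidden $4$- or $9$-cycle, so building it into the template would create extra obligations you have no means to discharge. Second, the charge bookkeeping: the paper sets $\mu(f_0)=d(f_0)+4$ so that the total is exactly zero, and then derives the contradiction by showing $\mu'(x)+\mu'(y)+\mu'(f_0)>0$ while all other vertices and faces end non-negative; your version, with a negative total and \emph{every} element non-negative, is workable only if the extra charge you ``move onto $f_0$'' is injected from outside the system, which should be stated as part of the initial charge assignment rather than as a discharging move.
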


Assume \autoref{HP} is not true and $G$ is a counterexample with minimum number of vertices. We shall derive a sequence of properties of $G$ that lead to a contradiction. It is obvious that $G$ is connected, for otherwise we can consider each component of $G$ separately.

\begin{lemma}\label{TWOCONNECTED}
$G$ is $2$-connected. 
\end{lemma}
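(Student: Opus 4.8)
The plan is a minimum-counterexample argument: assuming \autoref{TWOCONNECTED} fails, $G$ is not $2$-connected, so we split $G$ at a cut vertex, apply minimality to the pieces, and glue the resulting nice decompositions. Two preliminary reductions make the gluing clean. First I would check $G \ne K_2$, since $(D,M)$ with $M=\{e\}$ and $D$ the empty orientation is a nice decomposition of $(K_2,e)$. Next I would show every vertex of $G$ has degree at least $2$, as a pendant vertex already obstructs $2$-connectivity. If $v\notin\{x,y\}$ has degree $1$ with neighbour $u$, apply minimality to $(G-v,e)$ (the edge $e$ is still a boundary edge and the hypotheses of \autoref{thm-main} are hereditary) and reinstate $v$ by orienting $vu$ from $v$ to $u$; this keeps $\Delta^+\le 2$, preserves acyclicity, and leaves $d^+(x),d^+(y)$ equal to $0$. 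If the degree-$1$ vertex is $x$ (its only neighbour is then $y$, and $d_G(y)\ge 2$ since $G\ne K_2$), pick a boundary edge $e'=yz'$ of $G-x$ incident to $y$, apply minimality to $(G-x,e')$, then \emph{demote} the matching edge $yz'$: delete it from the matching, orient it $z'\to y$ (possible while keeping acyclicity with $d^+(y)=0$, $d^+(z')\le 2$, by placing $z'$ then $y$ last in a topological order), and finally put $xy$ into the matching. Each subcase contradicts the choice of $G$, so $\delta(G)\ge 2$.

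Now suppose $\delta(G)\ge 2$ but $G$ has a cut vertex. Then $G$ has at least two blocks, so the block--cut tree has at least two leaves, each of which is an \emph{end block} (a block meeting exactly one cut vertex). Since every edge of $G$ lies in a unique block, some end block $B$ satisfies $e\notin E(B)$; let $w$ be its cut vertex. Because $\delta(G)\ge 2$, $B$ is $2$-connected on at least three vertices. Set $G_1=B$ and $G_2=G-(V(B)\setminus\{w\})$, so $V(G_1)\cap V(G_2)=\{w\}$ and $E(G_1),E(G_2)$ partition $E(G)$; both pieces have fewer vertices than $G$, both inherit plane embeddings, and both satisfy the hypothesis of \autoref{thm-main}. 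The edge $e$ remains a boundary edge of $G_2$. Since the hypothesis of \autoref{thm-main} is embedding-independent, re-embed $B$ so that $w$ lies on its outer boundary, and let $e_1=wz$ be a boundary edge of this embedding of $B$.

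By minimality, $(G_1,e_1)$ has a nice decomposition $(D_1,M_1)$ and $(G_2,e)$ has a nice decomposition $(D_2,M_2)$; note $wz\in M_1$ and $w,z$ are sinks of $D_1$. To glue, the only obstruction is that $w$, matched to $z$ in $M_1$, may also be matched in $M_2$. Resolve this by demoting $wz$ exactly as above: delete $wz$ from $M_1$ and add the arc $z\to w$ to $D_1$, which stays acyclic with $d^+(w)=0$ and $d^+(z)\le 2$. Then $D:=D_1\cup D_2$ (with this arc) and $M:=(M_1\setminus\{wz\})\cup M_2$ form a nice decomposition of $(G,e)$: $M$ is a matching since $G_1,G_2$ meet only in $w$, which is now unmatched in the $G_1$-part; $D$ is acyclic because a directed cycle would have to lie wholly inside $G_1$ or wholly inside $G_2$ (a cycle through $w$ would force a $G_1$--$G_2$ path avoiding $w$); $\Delta^+(D)\le 2$ since $d^+_D(w)=d^+_{D_2}(w)\le 2$ and no other out-degree changes; and $d^+_D(x)=d^+_D(y)=0$ because $x,y\in V(G_2)$ and $d^+_{D_1}(w)=0$. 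This contradicts the choice of $G$, so $G$ is $2$-connected.

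The delicate point is the gluing at the cut vertex: the endpoints of a boundary edge are always matched, so both $(D_1,M_1)$ and $(D_2,M_2)$ are forced to match $w$, and one of these matchings must be locally rewritten as an arc while preserving acyclicity and the out-degree bound; the same ``demotion'' trick also disposes of the case where a degree-$1$ vertex is an endpoint of $e$. Everything else is routine bookkeeping about block structure and plane embeddings (that $e$ stays a boundary edge, that $E(G_1)$ and $E(G_2)$ partition $E(G)$, and that the glued orientation is acyclic).
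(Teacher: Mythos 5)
Your proof is correct and takes essentially the same route as the paper: split $G$ at the cut vertex, apply minimality to both pieces with the boundary edge of the piece avoiding $e$ chosen incident to the cut vertex, and then ``demote'' that matching edge to an arc oriented into the cut vertex -- exactly the paper's construction $M=(M_1\cup M_2)\setminus\{x'y'\}$, $D=D_1\cup D_2\cup\{\overleftarrow{x'y'}\}$. Your extra preprocessing (excluding $K_2$, handling degree-$1$ vertices, passing to an end block and re-embedding) is harmless but unnecessary, since the paper just splits into $H_1\ni e$ and $H_2$ and uses that the cut vertex already lies on the outer face of $H_2$ in the induced embedding.
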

\begin{proof}
Assume to the contrary that $G$ has a cut-vertex $x'$. Let $G = H_{1} \cup H_{2}$, $V(H_{1} \cap H_{2}) = \{x'\}$ and $e=xy \in E(H_{1})$. Let $e'=x'y'$ be a boundary edge of $H_{2}$. By the minimality of $G$, there is a nice decomposition $(D_1,M_1)$ of $(H_1, e)$ and a nice decomposition $(D_2, M_2)$ of $(H_2, e')$. Let $M = (M_{1} \cup M_{2}) \setminus \{x'y'\}$ and $D = D_{1} \cup D_{2} \cup \{\overleftarrow{x'y'}\}$. It is straightforward to verify that $(D, M)$ is a nice decomposition of $(G, e)$. 
\end{proof}

\begin{lemma}\label{delta}
For any $v \in V(G)\setminus \{x, y\}$, $d(v) \geq 3$. 
\end{lemma}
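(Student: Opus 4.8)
The plan is to proceed by the standard minimal-counterexample discharging argument, and for this particular lemma the key is a small reducibility argument: a vertex $v \ne x, y$ of degree at most $2$ can be deleted, a nice decomposition obtained for the smaller graph by minimality, and then $v$ re-inserted while respecting the out-degree and acyclicity constraints. So first I would assume for contradiction that some $v \in V(G) \setminus \{x,y\}$ has $d(v) \le 2$; by \autoref{TWOCONNECTED} we know $G$ is $2$-connected, so in fact $d(v) = 2$ (a $1$-vertex would be a cut-vertex or make $G$ disconnected unless $G = K_2$, which is handled trivially since then $v \in \{x,y\}$). Let $u_1, u_2$ be the two neighbours of $v$.

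Next I would choose a suitable boundary edge of $G - v$ to apply induction. Here one must be a little careful: deleting $v$ might destroy $2$-connectivity, but that is fine — we only need $G-v$ to be a plane graph (it satisfies the same forbidden-configuration hypothesis, since deleting a vertex only destroys subgraphs) with a boundary edge. If $\{x,y\} \cap \{u_1,u_2\} = \emptyset$ or more generally whenever $e = xy$ survives in $G-v$, take $e' = e = xy$; since $v \notin \{x,y\}$, the edge $xy$ is still present, and it is still on the boundary of $G-v$ (removing a vertex from the interior of a face only enlarges that face). By minimality there is a nice decomposition $(D', M')$ of $(G-v, e)$. Now extend it to $G$: put both edges $vu_1, vu_2$ into $M$? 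No — $M$ must be a matching, and $u_1$ or $u_2$ may already be matched in $M'$. Instead, add $v$ to the orientation by orienting at most two of its incident edges \emph{out} of $v$: since $d(v) = 2$, orient $vu_1$ and $vu_2$ both away from $v$, giving $d_D^+(v) = 2 \le 2$; place $v$ last in the acyclic order so no cycle is created; and since we added no out-edges at $u_1$ or $u_2$ and no edges at $x$ or $y$ (as $v \ne x,y$), the conditions $d_D^+(x) = d_D^+(y) = 0$ and $\Delta^+(D) \le 2$ are preserved. Set $M = M'$. Then $(D, M)$ is a nice decomposition of $(G, e)$, contradicting the choice of $G$.

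The one case needing attention is when orienting both edges out of $v$ is impossible because $v$ is incident with $x$ or $y$ — but that cannot happen, since $v \notin \{x,y\}$ only constrains $v$ itself, not its neighbours, and the out-degree-zero requirement is on $x$ and $y$, which receive no new out-edges when we only orient edges \emph{out of} $v$. So there is genuinely no obstacle here: the two edges at $v$ can always be oriented out of $v$. The only truly delicate point, which I would state explicitly, is that $e = xy$ remains a boundary edge of $G - v$; this is immediate because $v$ is an interior vertex of whichever face(s) it would need to be for $xy$ to leave the boundary, and in any plane graph deleting a vertex not on $e$ keeps $e$ on the outer boundary walk. Hence the main (and only) step is the re-insertion of $v$ with the right orientation, which is routine.

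I do not expect any real obstacle in this proof; it is the easiest of the reducible-configuration lemmas. The subtlety that the author will likely address — and that I would address too — is simply confirming that the induction hypothesis applies to $(G-v, e)$, i.e. that $G-v$ still satisfies the forbidden-configuration condition of \autoref{thm-main} (clear, as configurations are subgraphs) and that $e$ is still a boundary edge of $G-v$ (clear, as argued above). Everything else — matchings stay matchings, acyclicity is preserved by appending $v$ at the end, out-degree bounds hold since $d(v) = 2$ — is immediate.
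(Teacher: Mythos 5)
Your proposal is correct and is essentially the paper's own argument: delete $v$, obtain a nice decomposition of $(G-v,e)$ by minimality, and extend it by orienting the (at most two) edges at $v$ out of $v$, keeping $M$ unchanged. The extra discussion (using $2$-connectivity to force $d(v)=2$, checking that $xy$ stays on the boundary) is harmless but unnecessary, since the paper's one-line extension already works for any $v\ne x,y$ with $d(v)\le 2$.
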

\begin{proof}
Assume $v \in V(G)\setminus \{x, y\}$ and $d(v) \leq 2$. By the minimality of $G$, there exists a nice decomposition $(D,M)$ of $(G-v, e)$. Let $D'$ be obtained from $D$ by orienting edges incident with $v$ as out-going edges from $v$. Then $(D',M)$ is a nice decomposition of $(G,e)$. 
\end{proof}

\begin{lemma}\label{a3}
If $u$ and $v$ are two adjacent $3$-vertices, then $\{u, v\} \cap \{x, y\} \neq \emptyset$. 
\end{lemma}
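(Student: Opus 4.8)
The plan is a standard minimality/reducibility argument: assume for contradiction that $u,v$ are adjacent $3$-vertices with $\{u,v\}\cap\{x,y\}=\emptyset$, pass to the smaller plane graph $G'=G-\{u,v\}$, invoke minimality of $G$ to obtain a nice decomposition of $(G',e)$, and then reinsert $u$ and $v$ essentially for free.

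First I would check that $(G',e)$ is a legitimate smaller instance. Since $G$ is simple and $u,v\notin\{x,y\}$, the edge $e=xy$ is not incident to $u$ or $v$, so it survives in $G'$; and since deleting vertices only enlarges the outer face, every edge of the old outer boundary that is not deleted — in particular $e$ — still lies on the outer boundary of $G'$. All three hypotheses of \autoref{thm-main} (avoidance of the configurations in \autoref{COMMONFIGURE} together with \autoref{FIGURE-AT567}, or with \autoref{FIGURE-AT48}, or membership in $\mathcal{G}_{4,9}$) are preserved under taking subgraphs, so $G'$ satisfies whichever one $G$ does. As $|V(G')|<|V(G)|$, minimality of $G$ yields a nice decomposition $(D',M')$ of $(G',e)$.

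Next I would reinsert $u$ and $v$. Writing $N(u)=\{v,u_1,u_2\}$ and $N(v)=\{u,v_1,v_2\}$, I put $uv$ into the matching, $M:=M'\cup\{uv\}$ (still a matching, since $M'$ covers no vertex outside $V(G')$), and I extend $D'$ to $D$ by orienting $uu_1,uu_2$ out of $u$ and $vv_1,vv_2$ out of $v$. Then $u$ and $v$ have in-degree $0$ in $D$, so $D$ is acyclic; moreover $d^{+}_{D}(u)=d^{+}_{D}(v)=2$ while no other vertex gains an out-edge, so $\Delta^{+}(D)\le 2$ and $d^{+}_{D}(x)=d^{+}_{D}(y)=0$ persist. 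Thus $(D,M)$ is a nice decomposition of $(G,e)$, contradicting the choice of $G$.

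I do not expect a genuine obstacle in this lemma — what makes the reinsertion trivial is precisely that a $3$-vertex has only two neighbors besides its mate, so it can absorb all of its remaining incident edges as out-edges without breaking the out-degree bound of $2$. The only point that needs an explicit line of justification is that $e$ remains a boundary edge of $G'$ after possibly deleting vertices of the outer cycle, which follows from the monotonicity of the outer face under vertex deletion.
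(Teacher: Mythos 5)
Your proposal is correct and follows exactly the paper's argument: delete $u$ and $v$, obtain a nice decomposition of $(G-\{u,v\},e)$ by minimality, add $uv$ to the matching, and orient the remaining edges at $u$ and $v$ outward, which preserves acyclicity, the out-degree bound, and $d^+_D(x)=d^+_D(y)=0$. The extra checks you include (that $e$ stays a boundary edge and that the hypotheses are hereditary) are fine and merely make explicit what the paper leaves implicit.
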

\begin{proof}
Suppose that $u$ and $v$ are two adjacent $3$-vertices with $\{u, v\} \cap \{x, y\} = \emptyset$. By the minimality of $G$, there is a nice decomposition $(D,M)$ of $(G-\{u,v\}, e)$. Let $M' = M \cup \{uv\}$, and $D'$ be obtained from $D$ by orienting the other edges incident with $u,v$ as out-going edges from $u,v$. Then $(D',M')$ is a nice decomposition of $(G,e)$. 
\end{proof}

For an internal face $f$, let $t_f$ be the number of incident normal $3$-vertices and let $s_f$ be the number of adjacent internal $3$-faces. Note that each $3$-vertex of $f$ is incident with at most one $3$-face adjacent to $f$. Thus we have the following corollary.

\begin{corollary}
	 \label{cor-1}
	 For any internal face $f$, $t_f \le d(f)/2$ and $t_f+s_f \le d(f)$. 
\end{corollary}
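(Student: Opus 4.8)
The plan is to argue purely combinatorially on the boundary of the internal face $f$. By \autoref{TWOCONNECTED}, $G$ is $2$-connected, so this boundary is a cycle $C=v_1v_2\cdots v_{d(f)}$ (indices modulo $d(f)$) on which each vertex occurs exactly once, and $C$ has exactly $d(f)$ edges $e_i=v_iv_{i+1}$. \autoref{a3} tells us that no two normal $3$-vertices are adjacent, so the normal $3$-vertices incident with $f$ form an independent set of $C$; since an independent set in a cycle of length $d(f)$ has at most $\lfloor d(f)/2\rfloor$ vertices, this already gives $t_f\le d(f)/2$.

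For the bound $t_f+s_f\le d(f)$ I would set up an injection into $E(C)$. Call an edge $e$ of $C$ \emph{heavy} if the face lying on the side of $e$ opposite to $f$ is an internal $3$-face. Each internal $3$-face $g$ adjacent to $f$ shares an edge $e$ with $f$, and then (since $e$ lies on exactly two faces, $G$ having no bridges) $g$ is the face on the far side of $e$, so $e$ is heavy; hence mapping each heavy edge to the internal $3$-face on its far side is a surjection onto the set of internal $3$-faces adjacent to $f$, whence $s_f$ is at most the number of heavy edges of $C$. It therefore suffices to construct an injection $\tau$ from the set of normal $3$-vertices incident with $f$ into the set of non-heavy edges of $C$: this yields $t_f\le d(f)-(\#\text{heavy edges})\le d(f)-s_f$.

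To define $\tau$, fix a normal $3$-vertex $v=v_i$ incident with $f$; its two incident edges on $C$ are $e_{i-1}$ and $e_i$. The key claim is that these are not both heavy. Indeed, suppose they were, and let $g'$ and $g''$ be the internal $3$-faces on the far sides of $e_{i-1}$ and $e_i$; both contain $v$. Since $v$ is a $3$-vertex it has exactly one further incident edge $vw$ (with $w\notin\{v_{i-1},v_{i+1}\}$), and $2$-connectedness prevents $vw$ from being a bridge, so $g'\ne g''$; but then $g'=[v_{i-1}vw]$ and $g''=[vv_{i+1}w]$ are two triangles sharing the edge $vw$, contradicting that $G$ has no adjacent triangles. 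Hence at least one of $e_{i-1},e_i$ is non-heavy, and we set $\tau(v)$ to be such an edge. Because distinct normal $3$-vertices are pairwise non-adjacent on $C$, the edge-pairs $\{e_{i-1},e_i\}$ attached to distinct normal $3$-vertices are disjoint (an edge lying in two such pairs would join the two vertices), so every such choice of $\tau$ is automatically injective, and by construction its image consists of non-heavy edges.

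The two counting arguments are routine; the step I expect to require the most care is the key claim in the last paragraph, where one must first exclude the degenerate possibilities — $g'=g''$, or the third edge $vw$ coinciding with $e_{i-1}$ or $e_i$ — using $2$-connectedness and the fact that $v$ has degree exactly $3$, before the no-adjacent-triangle hypothesis can be applied.
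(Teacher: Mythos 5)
Your proposal is correct and takes essentially the same route as the paper: the paper obtains the corollary from \autoref{a3} together with the observation that each $3$-vertex of $f$ is incident with at most one $3$-face adjacent to $f$ (no adjacent triangles), which is exactly your key claim about the two boundary edges at a normal $3$-vertex not both being heavy; you simply make the resulting edge-counting injection explicit.
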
 

The following four lemmas first appeared in~\cite{MR4051856}, although the hypotheses and some definitions are slightly different. For the completeness of this paper, we include the short proofs with illustration figures. 

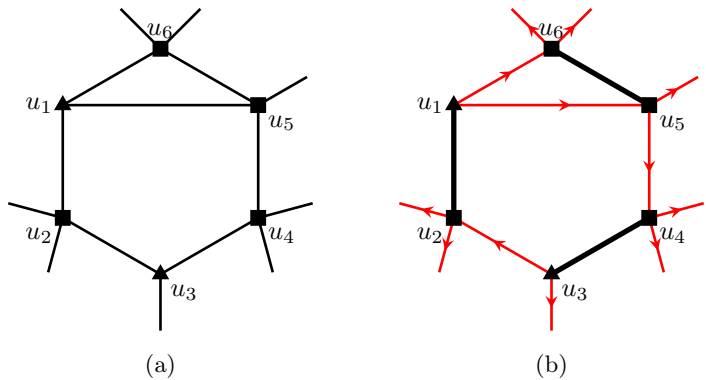
\begin{figure}[htbp]%
\centering
\subcaptionbox{\label{fig4a}}{\begin{tikzpicture}[line width = 1pt]
\def\s{1.5};
\coordinate (u1) at (150:\s);
\coordinate (u2) at (210:\s);
\coordinate (u3) at (270:\s);
\coordinate (u4) at (330:\s);
\coordinate (u5) at (30:\s);
\coordinate (u6) at (90:\s);
\draw (u1)node[left]{$u_{1}$}--(u2)node[below left]{$u_{2}$}--(u3)node[below right]{$u_{3}$}--(u4)node[below right]{$u_{4}$}--(u5)node[below right]{$u_{5}$}--(u6)node[above]{$u_{6}$}--cycle;
\draw (u1)--(u5);
\draw (u6)--($(u6)+(135:0.5*\s)$);
\draw (u6)--($(u6)+(45:0.5*\s)$);
\draw (u5)--($(u5)+(30:0.5*\s)$);
\draw (u4)--($(u4)+(15:0.5*\s)$);
\draw (u4)--($(u4)+(-75:0.5*\s)$);
\draw (u3)--($(u3)+(0, -0.5*\s)$);
\draw (u2)--($(u2)+(165:0.5*\s)$);
\draw (u2)--($(u2)+(255:0.5*\s)$);
\node[regular polygon, regular polygon sides=3, inner sep = 1, fill, draw] () at (u1) {};
\node[rectangle, inner sep = 2.5, fill, draw] () at (u2) {};
\node[regular polygon, regular polygon sides=3, inner sep = 1, fill, draw] () at (u3) {};
\node[rectangle, inner sep = 2.5, fill, draw] () at (u4) {};
\node[rectangle, inner sep = 2.5, fill, draw] () at (u5) {};
\node[rectangle, inner sep = 2.5, fill, draw] () at (u6) {};
\end{tikzpicture}}\hspace{1cm}
\subcaptionbox{\label{fig4b}}{\begin{tikzpicture}[line width = 1pt]
\def\s{1.5};
\coordinate (u1) at (150:\s);
\coordinate (u2) at (210:\s);
\coordinate (u3) at (270:\s);
\coordinate (u4) at (330:\s);
\coordinate (u5) at (30:\s);
\coordinate (u6) at (90:\s);

\draw[line width = 2pt] 
(u1)--(u2) 
(u3)--(u4) 
(u5)--(u6)
;

\path [draw=red, postaction={on each segment={mid arrow=red}}]
(u1)node[left]{$u_{1}$}--(u5)
(u1)--(u6)node[above]{$u_{6}$}
(u5)node[below right]{$u_{5}$}--(u4)node[below right]{$u_{4}$}
(u3)node[below right]{$u_{3}$}--(u2)node[below left]{$u_{2}$}
(u6)--($(u6)+(135:0.5*\s)$)
(u6)--($(u6)+(45:0.5*\s)$)
(u5)--($(u5)+(30:0.5*\s)$)
(u4)--($(u4)+(15:0.5*\s)$)
(u4)--($(u4)+(-75:0.5*\s)$)
(u3)--($(u3)+(0, -0.5*\s)$)
(u2)--($(u2)+(165:0.5*\s)$)
(u2)--($(u2)+(255:0.5*\s)$)
;
\node[regular polygon, regular polygon sides=3, inner sep = 1, fill, draw] () at (u1) {};
\node[rectangle, inner sep = 2.5, fill, draw] () at (u2) {};
\node[regular polygon, regular polygon sides=3, inner sep = 1, fill, draw] () at (u3) {};
\node[rectangle, inner sep = 2.5, fill, draw] () at (u4) {};
\node[rectangle, inner sep = 2.5, fill, draw] () at (u5) {};
\node[rectangle, inner sep = 2.5, fill, draw] () at (u6) {};
\end{tikzpicture}}
\caption{(a) A bad 5-cycle and an adjacent triangle. (b) For the proof of \autoref{S}. Here and in figures below,  a solid triangle represents a 3-vertex, a solid square represents a 4-vertex,  a thick line represents an edge in the matching $M$.}
\end{figure}

A $5$-cycle $[u_{1}u_{2}u_{3}u_{4}u_{5}]$ is a \emph{bad $5$-cycle} if it is adjacent to a triangle $[u_{1}u_{5}u_{6}]$ with $u_{i} \notin \{x, y\}$, where $1 \leq i \leq 6$, and $d(u_{1}) = d(u_{3}) = 3$, and $d(u_{2}) = d(u_{4}) = d(u_{5}) = d(u_{6}) = 4$, as depicted in \autoref{fig4a}. 

\begin{lemma}[Lemma 5.2 in~\cite{MR4051856}]\label{S}
There are no bad $5$-cycles in $G$. 
\end{lemma}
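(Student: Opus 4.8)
The plan is to argue by minimality of $G$, deleting the low-degree interior of the bad configuration and extending a nice decomposition of the smaller graph back to $G$. Suppose for contradiction that $G$ contains a bad $5$-cycle $[u_1u_2u_3u_4u_5]$ adjacent to a triangle $[u_1u_5u_6]$ with all $u_i\notin\{x,y\}$, $d(u_1)=d(u_3)=3$ and $d(u_2)=d(u_4)=d(u_5)=d(u_6)=4$. The vertices $u_1,u_3$ have degree $3$ and $u_2,u_4,u_5,u_6$ have degree $4$, so after removing the edges internal to the configuration, each of these vertices has only a bounded number of remaining edges to the rest of $G$. First I would apply the minimality of $G$ to the graph $G':=G-\{u_1,u_3\}$ together with the same boundary edge $e=xy$ (note $e$ is still a boundary edge since $u_1,u_3\notin\{x,y\}$ and $G'$ still satisfies the hypothesis, as deleting vertices does not create forbidden configurations), obtaining a nice decomposition $(D',M')$ of $(G',e)$.

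Next I would rebuild a nice decomposition $(D,M)$ of $(G,e)$. The idea is to reintroduce $u_1$ and $u_3$, orient the edges of the configuration and, where necessary, throw some edges into the matching, respecting the out-degree bound $\Delta^+\le 2$ and keeping the orientation acyclic. Concretely: $u_3$ has degree $3$, with two neighbors $u_2,u_4$ on the $5$-cycle and one neighbor $w$ outside; $u_1$ has degree $3$, with neighbors $u_2,u_5$ (on the $5$-cycle, and $u_5$ shared with the triangle) and $u_6$, but wait --- looking at the figure, $u_1$ is adjacent to $u_2$, $u_5$, and $u_6$ along the drawn edges, and similarly $u_3$ is adjacent to $u_2$, $u_4$ and one pendant edge. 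The key point is that $u_1,u_3$ each see at least one vertex among the $4$-vertices $u_2,u_4,u_5,u_6$ which, in $D'$, may already have out-degree $2$. The plan is: put one carefully chosen configuration edge at each of $u_1,u_3$ into $M$ (ensuring $M$ stays a matching --- here one uses that $u_1u_3\notin E(G)$ and that the two chosen edges are not incident to a common already-matched vertex, which is where the detailed edge accounting in \autoref{fig4b} comes in), and orient all remaining edges at $u_1,u_3$ out of $u_1,u_3$. Since $u_1,u_3$ become sources, no new cycle is created and the orientation stays acyclic; and since each of $u_1,u_3$ had degree $3$ with one incident edge removed to $M$, each gets out-degree at most $2$.

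The step I expect to be the real obstacle is the matching condition: we must choose the matched edges so that $M$ remains a matching while also not raising the out-degree of any $4$-vertex $u_2,u_4,u_5,u_6$ above $2$. This is exactly what forces the precise degree conditions in the definition of a bad $5$-cycle and the particular matching indicated in \autoref{fig4b} (the thick edges $u_1u_2$, $u_3u_4$, $u_5u_6$ and the red orientation of the rest). I would verify that with the matching $M=M'\cup\{u_1u_2,u_3u_4\}$ (adjusting $M'$ if it already uses an edge at $u_2,u_4,u_5$ or $u_6$ that conflicts), orienting $u_1u_5,u_1u_6,u_5u_4,u_3u_2$ and the pendant edges as shown, every vertex retains out-degree $\le 2$: the $4$-vertices each lose exactly one incident edge to $M$, so they can absorb at most $3$ oriented edges but in fact the local orientation in \autoref{fig4b} sends edges toward them rather than away, so their out-degree within the configuration is $0$ and hence bounded by whatever they had in $D'$ minus contributions, which stays $\le 2$. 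Finally I would check acyclicity: the only edges whose orientation changed relative to $D'$ are incident to $u_1,u_3,u_5,u_6$ and all newly oriented edges either emanate from the new sources $u_1,u_3$ or are arranged (as the arrows in \autoref{fig4b} show) to avoid closing a directed cycle, so $D$ is acyclic. This yields a nice decomposition of $(G,e)$, contradicting the choice of $G$.
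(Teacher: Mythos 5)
There is a genuine gap, and it lies exactly where you placed your bets: the graph you delete is too small. The paper's proof removes \emph{all six} vertices $u_1,\dots,u_6$ and applies minimality to $G-\{u_1,\dots,u_6\}$; then the three matching edges $u_1u_2$, $u_3u_4$, $u_5u_6$ lie entirely inside the deleted set (so they can never conflict with the matching $M'$ of the smaller graph), every edge joining the configuration to the rest of $G$ is oriented \emph{away} from the configuration (so no old vertex, in particular neither $x$ nor $y$, gains out-degree), each of $u_1,\dots,u_6$ ends up with out-degree exactly $2$, and acyclicity is immediate because no arc enters the configuration and the arcs inside it ($u_1\to u_5,\ u_1\to u_6,\ u_5\to u_4,\ u_3\to u_2$) are acyclic. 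Your proposal instead deletes only $\{u_1,u_3\}$, so $u_2,u_4,u_5,u_6$ survive into $G'$ and the nice decomposition $(D',M')$ given by minimality may already match them and saturate their out-degrees; you have no control over this. Your suggested matching $M'\cup\{u_1u_2,u_3u_4\}$ then need not be a matching, and the parenthetical ``adjusting $M'$ if it already uses an edge at $u_2,u_4,u_5$ or $u_6$'' is precisely the step that would have to be proved -- it is not a routine adjustment, since re-matching or re-orienting inside $G'$ can propagate.

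Concretely, your extension can be blocked: suppose $M'$ matches $u_2$ to an external neighbour with $d^+_{D'}(u_2)=1$, matches $u_5$ to $u_6$ with $d^+_{D'}(u_5)=2$ (so $u_5\to u_4$ in $D'$) and $d^+_{D'}(u_6)=2$, and matches $u_4$ and the external neighbour $w$ of $u_3$ as well. Then no edge at $u_1$ or $u_3$ can enter the matching; the out-degree bound forces $u_2\to u_1$, $u_1\to u_5$, $u_1\to u_6$, and at $u_3$ the only neighbour with spare out-degree is $u_4$, forcing $u_4\to u_3$, $u_3\to u_2$, $u_3\to w$ -- which closes the directed cycle $u_2\to u_1\to u_5\to u_4\to u_3\to u_2$. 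So neither the matching condition, nor the out-degree condition, nor acyclicity (which is \emph{not} automatic once some new edges must point into $u_1$ or $u_3$) can be guaranteed under your deletion scheme; also your reading of \autoref{fig4b} (``their out-degree within the configuration is $0$'') is inaccurate even for the paper's construction, where each $4$-vertex sends arcs outward and has out-degree exactly $2$. The fix is simply to delete the whole configuration, as the paper does, and use the internal matching $\{u_1u_2,u_3u_4,u_5u_6\}$ together with the outward orientation.
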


\begin{proof}[Proof of \autoref{S}]
Assume $C = [u_{1}u_{2}u_{3}u_{4}u_{5}]$ is a bad $5$-cycle and $T = [u_{1}u_{5}u_{6}]$ is a  triangle adjacent to $C$, where $d(u_{1}) = d(u_{3}) = 3$ and $d(u_{i}) = 4$ for $i \in \{2, 4, 5, 6\}$, as depicted in \autoref{fig4a}. 
A nice decomposition of $G - \{u_{1}, u_{2}, \dots, u_{6}\}$ is extended to a nice decomposition as in \autoref{fig4b}. 
\end{proof}

A triangle $T$ is \emph{minor} if $T$ is a $(3, 4, 4)$-triangle and $T \cap \{x, y\} = \emptyset$. A \emph{triangle chain} in $G$ is a subgraph of $G - \{x, y\}$ consisting of vertices $w_{1}, w_{2}, \dots, w_{k+1}, u_{1}, u_{2}, \dots, u_{k}$ in which $[w_{i}w_{i+1}u_{i}]$ is a $(4, 4, 4)$-cycle for $1 \leq i \leq k$, as depicted in \autoref{TRIANGLE-CHAIN}. We denote $T_{i}$ the triangle $[w_{i}w_{i+1}u_{i}]$ and denote such a triangle chain by $T_{1}T_{2}\dots T_{k}$. If a triangle $T$ has exactly one common vertex with a triangle chain $T_{1}T_{2}\dots T_{k}$ and the common vertex is in $T_{1}$, then we say $T$ \emph{intersects} the triangle chain $T_{1}T_{2}\dots T_{k}$. 

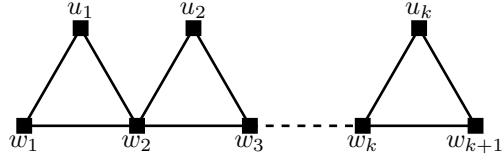
\begin{figure}[htbp]%
\centering
\begin{tikzpicture}[line width = 1pt]
\def\s{1.5};
\coordinate (W1) at (0, 0);
\coordinate (W2) at (\s, 0);
\coordinate (U1) at ($(W1)!1!60:(W2)$);
\draw (U1)node[above]{$u_{1}$}--(W1)node[below]{$w_{1}$}--(W2)node[below]{$w_{2}$}--cycle;
\coordinate (U2) at ($(U1) + (\s, 0)$);
\coordinate (W3) at ($(W2) + (\s, 0)$);
\draw (U2)node[above]{$u_{2}$}--(W2)--(W3)node[below]{$w_{3}$}--cycle;
\coordinate (U4) at ($(U2) + (2*\s, 0)$);
\coordinate (W4) at ($(W2) + (2*\s, 0)$);
\coordinate (W5) at ($(W3) + (2*\s, 0)$);
\draw[dashed] (W3)--(W4);
\draw (U4)node[above]{$u_{k}$}--(W4)node[below]{$w_{k}$}--(W5)node[below]{$w_{k+1}$}--cycle;
\node[rectangle, inner sep = 2.5, fill, draw] () at (U1) {};
\node[rectangle, inner sep = 2.5, fill, draw] () at (U2) {};
\node[rectangle, inner sep = 2.5, fill, draw] () at (U4) {};
\node[rectangle, inner sep = 2.5, fill, draw] () at (W1) {};
\node[rectangle, inner sep = 2.5, fill, draw] () at (W2) {};
\node[rectangle, inner sep = 2.5, fill, draw] () at (W3) {};
\node[rectangle, inner sep = 2.5, fill, draw] () at (W4) {};
\node[rectangle, inner sep = 2.5, fill, draw] () at (W5) {};
\end{tikzpicture}
\caption{A triangle chain.}
\label{TRIANGLE-CHAIN}
\end{figure}

\begin{figure}[htbp]%
\centering
\subcaptionbox{\label{fig6a}}{\begin{tikzpicture}[line width = 1pt]
\def\s{1.5};
\coordinate (W0) at (-\s, 0);
\coordinate (W1) at (0, 0);
\coordinate (U0) at ($(W0)!1!60:(W1)$);
\coordinate (W2) at (\s, 0);
\coordinate (U1) at ($(W1)!1!60:(W2)$);
\coordinate (W3) at (2*\s, 0);
\coordinate (U2) at ($(U1) + (\s, 0)$);
\coordinate (U4) at ($(U2) + (2*\s, 0)$);
\coordinate (W4) at (3*\s, 0);
\coordinate (W5) at (4*\s, 0);
\draw[dashed] (W3)--(W4);
\coordinate (z) at (5*\s, 0);

\draw
(W0)node[below]{$w_{0}$}--(U0)node[right]{$u_{0}$}
(W1)node[below]{$w_{1}$}--(U1)node[right]{$u_{1}$}
(W2)node[below]{$w_{2}$}--(U2)node[right]{$u_{2}$}
(W3)node[below]{$w_{3}$}--($(W3)!0.7!60:(W4)$)
(W4)node[below]{$w_{k}$}--(U4)node[right]{$u_{k}$}
(W5)node[below]{$w_{k+1}$}--(z)node[below]{$z$}
;

\path [draw]
(W0)--($(W0) + (-0.7*\s, 0)$)
(W0)--(W1)--(U0)
(W1)--(W2)--(U1)
(W2)--(W3)--(U2)
(W4)--(W5)--(U4)
(U0)--($(U0)!0.7!-120:(W0)$)
(U0)--($(U0)!0.7!120:(W1)$)
(U1)--($(U1)!0.7!-120:(W1)$)
(U1)--($(U1)!0.7!120:(W2)$)
(U2)--($(U2)!0.7!-120:(W2)$)
(U2)--($(U2)!0.7!120:(W3)$)
(W4)--($(W4)!0.7!-60:(W3)$)
(U4)--($(U4)!0.7!-120:(W4)$)
(U4)--($(U4)!0.7!120:(W5)$)
(W5)--($(W5)!0.7!60:(z)$)
(z)--($(z)!0.7!-120:(W5)$)
(z)--($(z) + (0.7*\s, 0)$)
;

\node[rectangle, inner sep = 2.5, fill, draw] () at (U0) {};
\node[rectangle, inner sep = 2.5, fill, draw] () at (U1) {};
\node[rectangle, inner sep = 2.5, fill, draw] () at (U2) {};
\node[rectangle, inner sep = 2.5, fill, draw] () at (U4) {};
\node[regular polygon, regular polygon sides=3, inner sep = 1, fill, draw] () at (W0) {};
\node[rectangle, inner sep = 2.5, fill, draw] () at (W1) {};
\node[rectangle, inner sep = 2.5, fill, draw] () at (W2) {};
\node[rectangle, inner sep = 2.5, fill, draw] () at (W3) {};
\node[rectangle, inner sep = 2.5, fill, draw] () at (W4) {};
\node[rectangle, inner sep = 2.5, fill, draw] () at (W5) {};
\node[regular polygon, regular polygon sides=3, inner sep = 1, fill, draw] () at (z) {};
\end{tikzpicture}}\vspace{0.5cm}

\subcaptionbox{\label{fig6b}}{\begin{tikzpicture}[line width = 1pt]
\def\s{1.5};
\coordinate (W0) at (-\s, 0);
\coordinate (W1) at (0, 0);
\coordinate (U0) at ($(W0)!1!60:(W1)$);
\coordinate (W2) at (\s, 0);
\coordinate (U1) at ($(W1)!1!60:(W2)$);
\coordinate (W3) at (2*\s, 0);
\coordinate (U2) at ($(U1) + (\s, 0)$);
\coordinate (U4) at ($(U2) + (2*\s, 0)$);
\coordinate (W4) at (3*\s, 0);
\coordinate (W5) at (4*\s, 0);
\draw[dashed] (W3)--(W4);
\coordinate (z) at (5*\s, 0);

\draw[line width = 2pt]
(W0)node[below]{$w_{0}$}--(U0)node[right]{$u_{0}$}
(W1)node[below]{$w_{1}$}--(U1)node[right]{$u_{1}$}
(W2)node[below]{$w_{2}$}--(U2)node[right]{$u_{2}$}
(W3)node[below]{$w_{3}$}--($(W3)!0.7!60:(W4)$)
(W4)node[below]{$w_{k}$}--(U4)node[right]{$u_{k}$}
(W5)node[below]{$w_{k+1}$}--(z)node[below]{$z$}
;

\path [draw=red,postaction={on each segment={mid arrow=red}}]
(W0)--($(W0) + (-0.7*\s, 0)$)
(W0)--(W1)--(U0)
(W1)--(W2)--(U1)
(W2)--(W3)--(U2)
(W4)--(W5)--(U4)
(U0)--($(U0)!0.7!-120:(W0)$)
(U0)--($(U0)!0.7!120:(W1)$)
(U1)--($(U1)!0.7!-120:(W1)$)
(U1)--($(U1)!0.7!120:(W2)$)
(U2)--($(U2)!0.7!-120:(W2)$)
(U2)--($(U2)!0.7!120:(W3)$)
(W4)--($(W4)!0.7!-60:(W3)$)
(U4)--($(U4)!0.7!-120:(W4)$)
(U4)--($(U4)!0.7!120:(W5)$)
(W5)--($(W5)!0.7!60:(z)$)
(z)--($(z)!0.7!-120:(W5)$)
(z)--($(z) + (0.7*\s, 0)$)
;

\node[rectangle, inner sep = 2.5, fill, draw] () at (U0) {};
\node[rectangle, inner sep = 2.5, fill, draw] () at (U1) {};
\node[rectangle, inner sep = 2.5, fill, draw] () at (U2) {};
\node[rectangle, inner sep = 2.5, fill, draw] () at (U4) {};
\node[regular polygon, regular polygon sides=3, inner sep = 1, fill, draw] () at (W0) {};
\node[rectangle, inner sep = 2.5, fill, draw] () at (W1) {};
\node[rectangle, inner sep = 2.5, fill, draw] () at (W2) {};
\node[rectangle, inner sep = 2.5, fill, draw] () at (W3) {};
\node[rectangle, inner sep = 2.5, fill, draw] () at (W4) {};
\node[rectangle, inner sep = 2.5, fill, draw] () at (W5) {};
\node[regular polygon, regular polygon sides=3, inner sep = 1, fill, draw] () at (z) {};
\end{tikzpicture}}
\caption{(a) The configuration in \autoref{TC1}. (b) For the proof of \autoref{TC1}.}
\end{figure}
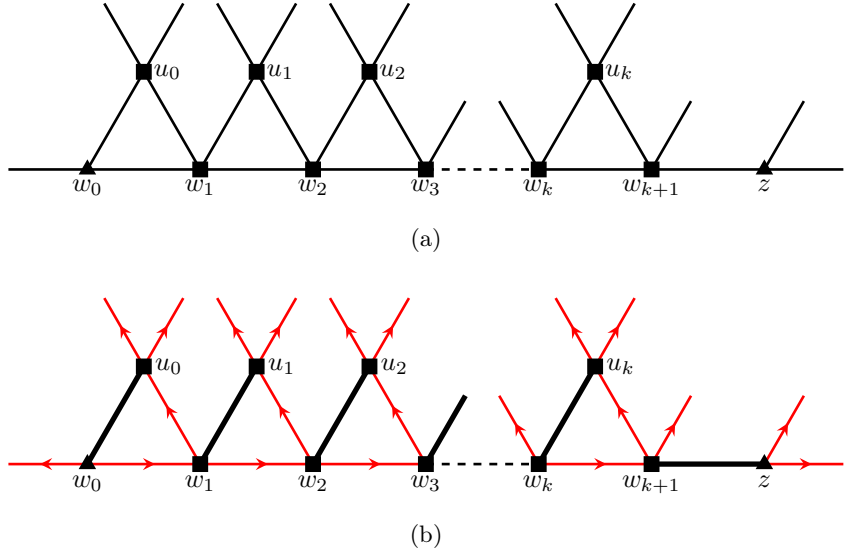

\begin{lemma}[Lemma 2.10 in~\cite{MR4051856}]\label{TC1}
If a minor triangle $T_{0}$ intersects a triangle chain $T_{1}T_{2}\dots T_{k}$, then every $3$-vertex adjacent to a vertex in $T_{k}$ belongs to $\{x, y\} \cup V(T_{0})$. 
\end{lemma}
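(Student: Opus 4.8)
The plan is to contradict the minimality of $G$. Suppose the conclusion fails, so there is a $3$-vertex $z\notin\{x,y\}\cup V(T_0)$ adjacent to a vertex of $T_k=[w_kw_{k+1}u_k]$; write $T_0=[w_0w_1u_0]$, where $w_1$ is the vertex shared with the chain, $w_0$ is the unique $3$-vertex of the minor triangle $T_0$, and $u_0$ its remaining $4$-vertex. The first task is to locate $z$. Each $w_i$ with $1\le i\le k$, and also $u_0$, is a $4$-vertex, so $z$ is none of them; and $w_k$ already has its four neighbours within $V(T_0)$ together with the chain vertices, none of which equals $z$, so $z\not\sim w_k$. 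Hence $z$ is adjacent to $w_{k+1}$ or to $u_k$, and not both, since otherwise $[zw_{k+1}u_k]$ would be a triangle sharing the edge $w_{k+1}u_k$ with $T_k$. I would also record that $z\not\sim w_0$ — otherwise $z$ and $w_0$ are adjacent $3$-vertices avoiding $\{x,y\}$, contradicting \autoref{a3} — and that the $2k+4$ vertices $w_0,u_0,w_1,\dots,w_{k+1},u_1,\dots,u_k,z$ are pairwise distinct, again from the degree conditions and from $T_0$ meeting the chain only in $w_1$.

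Next I would delete $S=\{w_0,u_0\}\cup\{w_1,\dots,w_{k+1}\}\cup\{u_1,\dots,u_k\}\cup\{z\}$. Since $S\cap\{x,y\}=\emptyset$, the edge $e$ is still a boundary edge of $G-S$ (deleting other vertices only enlarges the outer face), and $G-S$ is a smaller plane graph satisfying the hypothesis of \autoref{thm-main}, so by minimality $(G-S,e)$ has a nice decomposition $(D',M')$. If $z\sim w_{k+1}$, I would extend it by taking $M=M'\cup\{w_0u_0,w_1u_1,\dots,w_ku_k,zw_{k+1}\}$ (still a matching, the new edges lying inside $S$ and pairwise disjoint), orienting the path $w_0\to w_1\to\cdots\to w_{k+1}$, orienting $w_{i+1}\to u_i$ for $0\le i\le k$, orienting every non-matching edge with exactly one end in $S$ away from $S$, and, if $z$ happens to be adjacent to some $u_i$, orienting that edge $u_i\to z$. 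One then checks directly that every vertex of $S$ acquires out-degree at most $2$ (exactly $2$, except possibly $z$), that no vertex outside $S$ gains an out-edge, and that $x$ and $y$ keep out-degree $0$. The remaining case $z\sim u_k$ is handled the same way after re-routing the matching at the end of the chain: pair $z$ with $u_k$ and $w_k$ with $w_{k+1}$, and orient $w_k\to u_{k-1}$, $w_k\to u_k$, $u_k\to w_{k+1}$ in place of the corresponding arcs.

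Acyclicity then comes for free: every non-matching edge with exactly one end in $S$ points out of $S$, so no directed path can enter $S$ from outside; hence a directed cycle meeting $S$ would have to lie entirely within $S$, but there the orientation is merely the path $w_0\to\cdots\to w_{k+1}$ together with pendant arcs into the $u_i$'s (and at most one arc into $z$), which is acyclic. So $(D,M)$ is a nice decomposition of $(G,e)$, contradicting the choice of $G$. The only delicate step is the preliminary one — deciding whether $z$ sees $w_{k+1}$ or $u_k$, verifying that the listed vertices are genuinely distinct, and excluding $z\sim w_0$ via \autoref{a3} — together with getting the terminal matching right in the $u_k$ case; once $S$ and the matching are chosen correctly, the orientation and the out-degree count are routine, and acyclicity requires nothing beyond the remark that $S$ receives no arcs from outside.
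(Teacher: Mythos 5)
Your proposal is correct and is essentially the paper's own argument: the paper likewise deletes $\bigcup_{i=0}^{k}V(T_i)\cup\{z\}$, invokes minimality, and extends with exactly the matching $\{w_iu_i\}\cup\{zw_{k+1}\}$ and the orientation $w_0\to\cdots\to w_{k+1}$, $w_{i+1}\to u_i$, all other edges directed out of the deleted set (shown only as a figure, with the $z\sim u_k$ case left to symmetry, which your re-routed matching handles explicitly). Your additional checks (distinctness, $z\not\sim w_0$ via Lemma \ref{a3}, possible edges $zu_i$, acyclicity) only make explicit what the paper's figure leaves implicit.
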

The $k=0$ case of the above lemma asserts that every 3-vertex adjacent to a vertex in $T_0$ belongs to $\{x, y\}$.

\begin{proof}[Proof of \autoref{TC1}]
Assume $G$ has a minor triangle
$T_{0} = [w_{0}w_{1}u_{0}]$ intersecting a triangle chain $T_{1}T_{2}\dots T_{k}$, and $z \notin \{x, y\} \cup V(T_{0})$ is a 3-vertex adjacent to a vertex in $T_k$,
as depicted in \autoref{fig6a}.
%
A nice decomposition of $G- (\bigcup_{i=0}^{k}V(T_{i}) \cup \{z\})$ is extended to a nice decomposition of $G$ as in \autoref{fig6b}.
%
%
%
\end{proof}

\begin{figure}[htbp]%
\centering
\subcaptionbox{\label{fig7a}}{\begin{tikzpicture}[line width = 1pt]
\def\s{1.5};
\coordinate (W0) at (-\s, 0);
\coordinate (W1) at (0, 0);
\coordinate (U0) at ($(W0)!1!60:(W1)$);
\coordinate (W2) at (\s, 0);
\coordinate (U1) at ($(W1)!1!60:(W2)$);
\coordinate (W3) at (2*\s, 0);
\coordinate (U2) at ($(U1) + (\s, 0)$);
\coordinate (U4) at ($(U2) + (2*\s, 0)$);
\coordinate (W4) at (3*\s, 0);
\coordinate (W5) at (4*\s, 0);
\draw[dashed] (W3)--(W4);
\coordinate (z) at (5*\s, 0);
\coordinate (z1) at (6*\s, 0);
\coordinate (z2) at ($(U4) + (2*\s, 0)$);

\draw
(W0)node[below]{$w_{0}$}--(U0)node[right]{$u_{0}$}
(W1)node[below]{$w_{1}$}--(U1)node[right]{$u_{1}$}
(W2)node[below]{$w_{2}$}--(U2)node[right]{$u_{2}$}
(W3)node[below]{$w_{3}$}--($(W3)!0.7!60:(W4)$)
(W4)node[below]{$w_{k}$}--(U4)node[right]{$u_{k}$}
(W5)node[below]{$w_{k+1}$}--(z)node[below]{$z$}
(z1)node[below]{$z_{1}$}--(z2)node[right]{$z_{2}$}
;

\path [draw]
(W0)--($(W0) + (-0.7*\s, 0)$)
(W0)--(W1)--(U0)
(W1)--(W2)--(U1)
(W2)--(W3)--(U2)
(W4)--(W5)--(U4)
(U0)--($(U0)!0.7!-120:(W0)$)
(U0)--($(U0)!0.7!120:(W1)$)
(U1)--($(U1)!0.7!-120:(W1)$)
(U1)--($(U1)!0.7!120:(W2)$)
(U2)--($(U2)!0.7!-120:(W2)$)
(U2)--($(U2)!0.7!120:(W3)$)
(W4)--($(W4)!0.7!-60:(W3)$)
(U4)--($(U4)!0.7!-120:(W4)$)
(U4)--($(U4)!0.7!120:(W5)$)
(W5)--($(W5)!0.7!60:(z)$)
(z2)--($(z2)!0.7!-120:(z)$)
(z2)--($(z2)!0.7!120:(z1)$)
(z)--($(z)!0.7!-60:(W5)$)
(z)--(z2)
(z1)--(z)
(z1)--($(z1) + (0.7*\s, 0)$)
;

\node[rectangle, inner sep = 2.5, fill, draw] () at (U0) {};
\node[rectangle, inner sep = 2.5, fill, draw] () at (U1) {};
\node[rectangle, inner sep = 2.5, fill, draw] () at (U2) {};
\node[rectangle, inner sep = 2.5, fill, draw] () at (U4) {};
\node[regular polygon, regular polygon sides=3, inner sep = 1, fill, draw] () at (W0) {};
\node[rectangle, inner sep = 2.5, fill, draw] () at (W1) {};
\node[rectangle, inner sep = 2.5, fill, draw] () at (W2) {};
\node[rectangle, inner sep = 2.5, fill, draw] () at (W3) {};
\node[rectangle, inner sep = 2.5, fill, draw] () at (W4) {};
\node[rectangle, inner sep = 2.5, fill, draw] () at (W5) {};
\node[rectangle, inner sep = 2.5, fill, draw] () at (z) {};
\node[regular polygon, regular polygon sides=3, inner sep = 1, fill, draw] () at (z1) {};
\node[rectangle, inner sep = 2.5, fill, draw] () at (z2) {};
\end{tikzpicture}}\vspace{0.5cm}

\subcaptionbox{\label{fig7b}}{\begin{tikzpicture}[line width = 1pt]
\def\s{1.5};
\coordinate (W0) at (-\s, 0);
\coordinate (W1) at (0, 0);
\coordinate (U0) at ($(W0)!1!60:(W1)$);
\coordinate (W2) at (\s, 0);
\coordinate (U1) at ($(W1)!1!60:(W2)$);
\coordinate (W3) at (2*\s, 0);
\coordinate (U2) at ($(U1) + (\s, 0)$);
\coordinate (U4) at ($(U2) + (2*\s, 0)$);
\coordinate (W4) at (3*\s, 0);
\coordinate (W5) at (4*\s, 0);
\draw[dashed] (W3)--(W4);
\coordinate (z) at (5*\s, 0);
\coordinate (z1) at (6*\s, 0);
\coordinate (z2) at ($(U4) + (2*\s, 0)$);

\draw[line width = 2pt]
(W0)node[below]{$w_{0}$}--(U0)node[right]{$u_{0}$}
(W1)node[below]{$w_{1}$}--(U1)node[right]{$u_{1}$}
(W2)node[below]{$w_{2}$}--(U2)node[right]{$u_{2}$}
(W3)node[below]{$w_{3}$}--($(W3)!0.7!60:(W4)$)
(W4)node[below]{$w_{k}$}--(U4)node[right]{$u_{k}$}
(W5)node[below]{$w_{k+1}$}--(z)node[below]{$z$}
(z1)node[below]{$z_{1}$}--(z2)node[right]{$z_{2}$}
;

\path [draw=red,postaction={on each segment={mid arrow=red}}]
(W0)--($(W0) + (-0.7*\s, 0)$)
(W0)--(W1)--(U0)
(W1)--(W2)--(U1)
(W2)--(W3)--(U2)
(W4)--(W5)--(U4)
(U0)--($(U0)!0.7!-120:(W0)$)
(U0)--($(U0)!0.7!120:(W1)$)
(U1)--($(U1)!0.7!-120:(W1)$)
(U1)--($(U1)!0.7!120:(W2)$)
(U2)--($(U2)!0.7!-120:(W2)$)
(U2)--($(U2)!0.7!120:(W3)$)
(W4)--($(W4)!0.7!-60:(W3)$)
(U4)--($(U4)!0.7!-120:(W4)$)
(U4)--($(U4)!0.7!120:(W5)$)
(W5)--($(W5)!0.7!60:(z)$)
(z2)--($(z2)!0.7!-120:(z)$)
(z2)--($(z2)!0.7!120:(z1)$)
(z)--($(z)!0.7!-60:(W5)$)
(z)--(z2)
(z1)--(z)
(z1)--($(z1) + (0.7*\s, 0)$)
;

\node[rectangle, inner sep = 2.5, fill, draw] () at (U0) {};
\node[rectangle, inner sep = 2.5, fill, draw] () at (U1) {};
\node[rectangle, inner sep = 2.5, fill, draw] () at (U2) {};
\node[rectangle, inner sep = 2.5, fill, draw] () at (U4) {};
\node[regular polygon, regular polygon sides=3, inner sep = 1, fill, draw] () at (W0) {};
\node[rectangle, inner sep = 2.5, fill, draw] () at (W1) {};
\node[rectangle, inner sep = 2.5, fill, draw] () at (W2) {};
\node[rectangle, inner sep = 2.5, fill, draw] () at (W3) {};
\node[rectangle, inner sep = 2.5, fill, draw] () at (W4) {};
\node[rectangle, inner sep = 2.5, fill, draw] () at (W5) {};
\node[rectangle, inner sep = 2.5, fill, draw] () at (z) {};
\node[regular polygon, regular polygon sides=3, inner sep = 1, fill, draw] () at (z1) {};
\node[rectangle, inner sep = 2.5, fill, draw] () at (z2) {};
\end{tikzpicture}}
\caption{(a) The configuration in \autoref{TC2}. (b) For the proof of \autoref{TC2}.}
\end{figure}
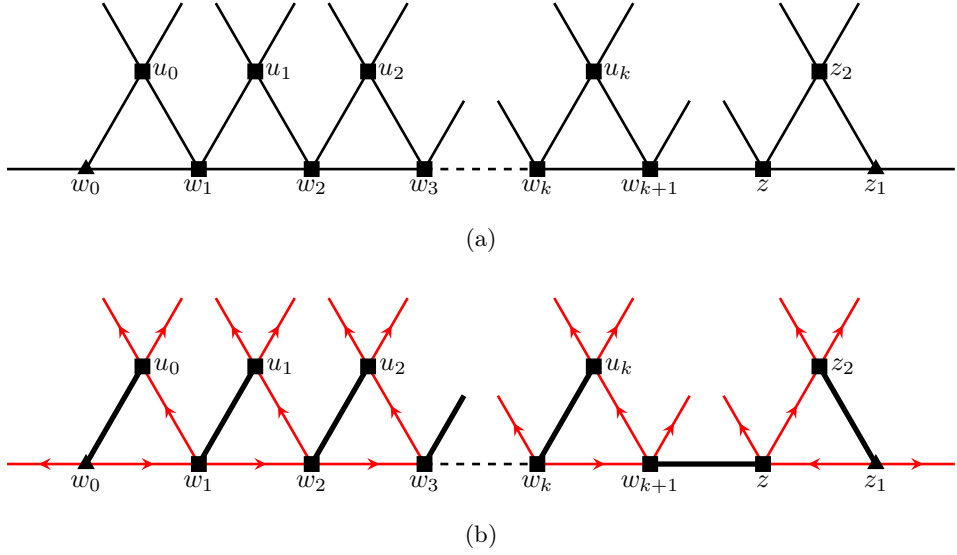

\begin{lemma}[Lemma 2.11 in~\cite{MR4051856}]\label{TC2}
If a minor triangle $T_{0}$ intersects a triangle chain $T_{1}T_{2}\dots T_{k}$, then the distance between $T_{k}$ and another minor triangle is at least two. 
\end{lemma}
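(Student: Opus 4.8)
The plan is to argue by contradiction inside the minimal counterexample $G$, in the same style as \autoref{S} and \autoref{TC1}: assume some minor triangle $T'\neq T_0$ lies at distance at most one from $T_k$, isolate the resulting local configuration, delete a subgraph $S$ containing $T_0,\dots,T_k,T'$, apply minimality to $(G-S,e)$, and extend the decomposition over $S$. Two facts are available throughout: $G$ has no two adjacent triangles (noted just before \autoref{S}), and by \autoref{TC1} every $3$-vertex adjacent to a vertex of $T_k$ lies in $\{x,y\}\cup V(T_0)$.

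The first step is to locate the $3$-vertex of $T'$. Write $T'=[pqr]$ with $p$ the $3$-vertex and $q,r$ the $4$-vertices; since $T'$ is minor, $\{p,q,r\}\cap\{x,y\}=\emptyset$. If $p$ is adjacent to a vertex of $T_k$ --- in particular if $T'$ shares a vertex with $T_k$, which is then one of the $4$-vertices $w_k,w_{k+1},u_k$ and hence a neighbour of $p$ in $T'$ --- then \autoref{TC1} gives $p\in V(T_0)$, and since $w_0$ is the unique $3$-vertex of $T_0=[w_0w_1u_0]$, we get $p=w_0$. Now $\deg(w_0)=3$, so $N(w_0)=\{w_1,u_0,s\}$ for a third vertex $s$, and the two $4$-vertices of $T'$ satisfy $\{q,r\}\subseteq\{w_1,u_0,s\}$: if $\{q,r\}=\{w_1,u_0\}$ then $T'=T_0$, and otherwise $T'$ shares the edge $w_0w_1$ or $w_0u_0$ with $T_0$, making them adjacent triangles. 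Both are impossible, so $T'$ and $T_k$ are vertex-disjoint and $p$ is not adjacent to $T_k$.

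Consequently the distance-one relation is witnessed by an edge $vz$ with $v\in V(T_k)$ and $z$ one of the $4$-vertices $q,r$ of $T'$. For $k\ge 1$ the vertex $w_k$ already has the four chain-neighbours $w_{k-1},u_{k-1},w_{k+1},u_k$, so either $v\in\{w_{k+1},u_k\}$, or $v=w_k$ and $z\in V(T_{k-1})$; the latter gives only a degenerate form of the configuration below, handled with a slightly smaller $S$ and the analogous orientation. As $w_{k+1}$ and $u_k$ play interchangeable roles in $T_k$, assume $v=w_{k+1}$ (for $k=0$ one also allows $v=w_0$, with the same argument after renaming). This is exactly \autoref{fig7a}: the triangle chain $T_0T_1\cdots T_k$, the edge $w_{k+1}z$, and the minor triangle $T'=[zz_1z_2]$ with $z,z_2$ its $4$-vertices and $z_1$ its $3$-vertex. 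Set $S=\bigl(\bigcup_{i=0}^{k}V(T_i)\bigr)\cup V(T')$. Since $x,y\notin S$, the edge $xy$ is still a boundary edge of $G-S$, so by minimality $(G-S,e)$ has a nice decomposition, which I extend over $S$ as in \autoref{fig7b}: place the edges $w_0u_0,\dots,w_ku_k$ together with $w_{k+1}z$ and $z_1z_2$ into the matching --- this is a perfect matching of $G[S]$ --- orient the directed path $w_0\to w_1\to\cdots\to w_{k+1}$, the edges $w_{i+1}\to u_i$ for $0\le i\le k$ and $z_1\to z\to z_2$ inside $S$, and orient every edge from $S$ to $G-S$ away from $S$. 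One checks that every vertex of $S$ gets out-degree exactly $2$, that no directed cycle arises (all $S$-to-outside edges point outward and the induced orientation on $S$ is acyclic), and that $d^{+}(x)=d^{+}(y)=0$ persists; this nice decomposition of $(G,e)$ contradicts the choice of $G$.

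The real work, and the main obstacle, is the middle step: showing that the $3$-vertex of any minor triangle lying near $T_k$ must be $w_0$, and then checking that every way the $4$-vertex $z$ and its neighbours could coincide with chain vertices collapses either to two adjacent triangles, to $T'=T_0$, or to the ``$p$ adjacent to $T_k$'' situation already dispatched by \autoref{TC1}. Once the local picture is reduced to \autoref{fig7a} (or one of its small degenerations), extending the decomposition is the same routine figure-checking used for \autoref{S} and \autoref{TC1}.
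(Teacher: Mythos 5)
Your proposal is correct and follows essentially the same route as the paper: \autoref{TC1} pins the offending minor triangle down so that it is joined to $T_{k}$ by a $(4,4)$-edge $w_{k+1}z$ as in \autoref{fig7a}, and your matching $\{w_{i}u_{i}\}\cup\{w_{k+1}z, z_{1}z_{2}\}$ with the directed path $w_{0}\to\cdots\to w_{k+1}$, the edges $w_{i+1}\to u_{i}$, $z_{1}\to z\to z_{2}$ and all remaining edges oriented out of the deleted set is exactly the extension of \autoref{fig7b}. One small quibble: the subcase $v=w_{k}$ with $z\in V(T_{k-1})$ is not a ``degenerate configuration'' requiring a smaller $S$ --- it is vacuous, since applying \autoref{TC1} to the shorter chain $T_{1}\dots T_{k-1}$ (or its $k=0$ form) forces the $3$-vertex of $T'$ to be $w_{0}$, whence $T'$ either equals $T_{0}$ or shares an edge with it, contradicting the absence of adjacent triangles.
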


\begin{proof}[Proof of \autoref{TC2}]
Assume to the contrary that $T_{1}T_{2} \dots T_{k}$ with  $T_{i} = [w_{i}w_{i+1}u_{i}]$, $1 \le i \le k$, is a triangle chain that intersects a minor triangle $T_{0} = [w_{0}w_{1}u_{0}]$, and the distance between $T_{k}$ and another minor triangle $T_{0}'=[zz_{1}z_{2}]$ with $d(z_{1}) = 3$ is less than 2. By \autoref{TC1}, we may assume $w_{k+1}z$ is a $(4, 4)$-edge connecting $T_{k}$ and $T_{0}'$, as depicted in \autoref{fig7a}.
A nice decomposition of $G-(\bigcup_{i=0}^{k} V(T_{i}) \cup V(T'_{0}))$ is extended to a nice decomposition of $G$ as in  \autoref{fig7b}.
%
%
%
\end{proof}

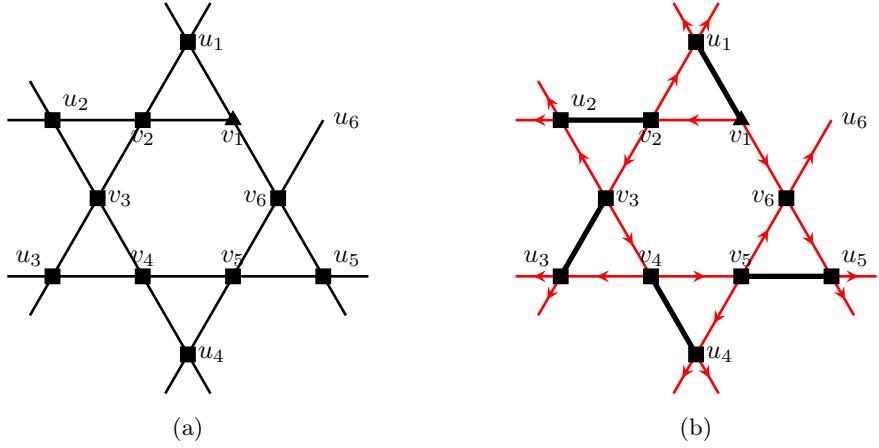
\begin{figure}
\centering
\subcaptionbox{\label{fig8a}}[0.4\linewidth]{\begin{tikzpicture}[line width = 1pt]
\def\s{1.2}
\foreach \ang in {1, 2, 3, 4, 5, 6}
{
\def\pointnamev{v\ang}
\coordinate (\pointnamev) at ($(\ang*360/6:\s)$);
\def\pointnameu{u\ang}
\coordinate (\pointnameu) at ($(\ang*360/6+30:1.732*\s)$);
}
\path [draw]
(v1)node[below]{$v_{1}$}--(v2)node[below]{$v_{2}$}--(v3)node[right]{$v_{3}$}--(v4)node[above]{$v_{4}$}--(v5)node[above]{$v_{5}$}--(v6)node[left]{$v_{6}$}
(v1)--(v6)
(v2)--(u1)
(v3)--(u2)
(v4)--(u3)
(v5)--(u4)
(v6)--(u5)
;
\draw
(v1)--(u1)
(v2)--(u2)
(v3)--(u3)
(v4)--(u4)
(v5)--(u5)
;

\path [draw]
(u1)node[right]{$u_{1}$}--($(u1)+ (60:0.5*\s)$)
(u1)--($(u1)+ (120:0.5*\s)$)
(u2)node[above right]{$u_{2}$}--($(u2)+ (120:0.5*\s)$)
(u2)--($(u2)+ (180:0.5*\s)$)
(u3)node[above left]{$u_{3}$}--($(u3)+ (180:0.5*\s)$)
(u3)--($(u3)+ (240:0.5*\s)$)
(u4)node[right]{$u_{4}$}--($(u4)+ (240:0.5*\s)$)
(u4)--($(u4)+ (300:0.5*\s)$)
(u5)node[above right]{$u_{5}$}--($(u5)+ (300:0.5*\s)$)
(u5)--($(u5)+ (0:0.5*\s)$)
(v6)--(u6)node[right]{$u_{6}$}
;
\foreach \ang in {2, 3, 4, 5, 6}
{
\node[rectangle, inner sep = 2.5, fill, draw] () at (v\ang) {};
}
\foreach \ang in {1, 2, 3, 4, 5}
{
\node[rectangle, inner sep = 2.5, fill, draw] () at (u\ang) {};
}
\node[regular polygon, regular polygon sides=3, inner sep = 1, fill, draw] () at (v1) {};
\end{tikzpicture}}
\subcaptionbox{\label{fig8b}}[0.4\linewidth]{\begin{tikzpicture}[line width = 1pt]
\def\s{1.2}
\foreach \ang in {1, 2, 3, 4, 5, 6}
{
\def\pointnamev{v\ang}
\coordinate (\pointnamev) at ($(\ang*360/6:\s)$);
\def\pointnameu{u\ang}
\coordinate (\pointnameu) at ($(\ang*360/6+30:1.732*\s)$);
}
\path [draw=red,postaction={on each segment={mid arrow=red}}]
(v1)node[below]{$v_{1}$}--(v2)node[below]{$v_{2}$}--(v3)node[right]{$v_{3}$}--(v4)node[above]{$v_{4}$}--(v5)node[above]{$v_{5}$}--(v6)node[left]{$v_{6}$}
(v1)--(v6)
(v2)--(u1)
(v3)--(u2)
(v4)--(u3)
(v5)--(u4)
(v6)--(u5)
;
\draw[line width = 2pt] 
(v1)--(u1)
(v2)--(u2)
(v3)--(u3)
(v4)--(u4)
(v5)--(u5)
;

\path [draw=red,postaction={on each segment={mid arrow=red}}]
(u1)node[right]{$u_{1}$}--($(u1)+ (60:0.5*\s)$)
(u1)--($(u1)+ (120:0.5*\s)$)
(u2)node[above right]{$u_{2}$}--($(u2)+ (120:0.5*\s)$)
(u2)--($(u2)+ (180:0.5*\s)$)
(u3)node[above left]{$u_{3}$}--($(u3)+ (180:0.5*\s)$)
(u3)--($(u3)+ (240:0.5*\s)$)
(u4)node[right]{$u_{4}$}--($(u4)+ (240:0.5*\s)$)
(u4)--($(u4)+ (300:0.5*\s)$)
(u5)node[above right]{$u_{5}$}--($(u5)+ (300:0.5*\s)$)
(u5)--($(u5)+ (0:0.5*\s)$)
(v6)--(u6)node[right]{$u_{6}$}
;
\foreach \ang in {2, 3, 4, 5, 6}
{
\node[rectangle, inner sep = 2.5, fill, draw] () at (v\ang) {};
}
\foreach \ang in {1, 2, 3, 4, 5}
{
\node[rectangle, inner sep = 2.5, fill, draw] () at (u\ang) {};
}
\node[regular polygon, regular polygon sides=3, inner sep = 1, fill, draw] () at (v1) {};
\end{tikzpicture}}
\caption{(a) The configuration in \autoref{TC3}. (b) For the proof of \autoref{TC3}.}
\end{figure}

\begin{lemma}[Lemma 3.1 in~\cite{MR4051856}]\label{TC3}
Assume that $f$ is a $6$-face adjacent to five $3$-faces, and none of the vertices on these $3$-faces is in $\{x, y\}$. If $f$ is incident with a $3$-vertex, then there is at least one $5^{+}$-vertex on these five $3$-faces. 
\end{lemma}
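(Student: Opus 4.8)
The plan is to run the reducibility argument used for \autoref{S}, \autoref{TC1} and \autoref{TC2}. I assume the conclusion fails, so $G$ has a $6$-face $f$ that is incident with a $3$-vertex, is adjacent to five $3$-faces, has none of the vertices of those $3$-faces in $\{x,y\}$, and has every vertex of those $3$-faces of degree at most $4$. First I would determine the exact shape of this configuration; then I would delete its vertices, apply the minimality of $G$ to the smaller graph, and put the deleted part back as in \autoref{fig8b}, producing a nice decomposition of $(G,e)$ and hence a contradiction.

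\emph{Determining the configuration.} Since $G$ is $2$-connected (\autoref{TWOCONNECTED}), the boundary of $f$ is a genuine $6$-cycle $v_1v_2\cdots v_6$, and no $3$-face can share two edges of $f$, so the five adjacent $3$-faces lie on five consecutive edges; write $T_i=[v_iv_{i+1}u_i]$ for the one on $v_iv_{i+1}$, $1\le i\le 5$, so that $v_6v_1$ is the edge carrying no $3$-face. By \autoref{delta} each of $v_1,\dots,v_6,u_1,\dots,u_5$ is a $3$- or a $4$-vertex. Each of $v_2,\dots,v_5$ lies on two of the $T_i$, and $u_{i-1}\ne u_i$ because adjacent triangles are forbidden (\autoref{CommonA}); hence $d(v_2)=\cdots=d(v_5)=4$, and the $3$-vertex incident with $f$ must be $v_1$ or $v_6$. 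Using the reflective symmetry of the configuration I may assume it is $v_1$, and then \autoref{a3} forces $v_6$ to be a $4$-vertex. So $d(v_1)=3$ with $N(v_1)=\{v_2,v_6,u_1\}$, while $v_2,\dots,v_6,u_1,\dots,u_5$ are all $4$-vertices — exactly the configuration in \autoref{fig8a}. I would then argue that the eleven vertices $v_1,\dots,v_6,u_1,\dots,u_5$ are pairwise distinct and that no $u_i$ is adjacent to a hexagon vertex other than $v_i$ and $v_{i+1}$ (so in particular the fourth neighbour $u_6$ of $v_6$ lies outside $S:=\{v_1,\dots,v_6,u_1,\dots,u_5\}$): any violation produces either two adjacent triangles (\autoref{CommonA}) or a chord of the $6$-cycle $f$, and such a chord splits $f$ either into two quadrilaterals (\autoref{CommonC}) or into a triangle and a pentagon whose triangle is adjacent to some $T_i$ (again \autoref{CommonA}), or, in case~(3), simply yields a forbidden $4$-cycle; the few remaining coincidences of two non-consecutive $u_i$'s I would rule out by planarity.

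\emph{Reducing and extending.} The induced subgraph $G-S$ still satisfies the hypothesis of \autoref{thm-main} (that property is subgraph-closed, and deleting vertices creates no new $4$- or $9$-cycles), it has fewer vertices than $G$, and $e=xy$ is still a boundary edge of it since deleting vertices of $S$ (none of which is $x$ or $y$) only enlarges the outer face; so by the minimality of $G$, $(G-S,e)$ has a nice decomposition $(D',M')$ (obtained componentwise if $G-S$ is disconnected). I then set $M=M'\cup\{v_1u_1,v_2u_2,v_3u_3,v_4u_4,v_5u_5\}$ and let $D$ agree with $D'$ on $G-S$ while adding the arcs $v_1\to v_2\to v_3\to v_4\to v_5\to v_6$, $v_1\to v_6$, $v_{i+1}\to u_i$ for $1\le i\le 5$, $v_6\to u_6$, and both remaining edges at each $u_i$ directed away from $u_i$ (orienting any surviving edge $u_iu_j$ from the smaller to the larger index), exactly as in \autoref{fig8b}. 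A direct check then shows that $M$ is a matching containing $e$; that each of $v_1,\dots,v_6,u_1,\dots,u_5$ has out-degree exactly $2$ in $D$ whereas each vertex of $G-S$ only acquires new in-arcs, so $\Delta^{+}(D)\le 2$ and $d^{+}_D(x)=d^{+}_D(y)=0$; and that the ordering $v_1<v_2<\cdots<v_6<u_1<\cdots<u_5<(\text{vertices of }G-S\text{ in their }D'\text{-order})$ is a topological ordering of $D$, so $D$ is acyclic. Thus $(D,M)$ is a nice decomposition of $(G,e)$, contradicting the choice of $G$.

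The step I expect to be the real obstacle is the rigidity claim in the second paragraph: one must be certain that the degree bounds force precisely the configuration of \autoref{fig8a}, and in particular that no identification $u_i=u_j$ or $u_i=v_k$ can occur, since such a coincidence would destroy the matching $\{v_iu_i\}_{i=1}^{5}$ or the out-degree count. Everything after that is mechanical bookkeeping, entirely parallel to the proofs of \autoref{S}--\autoref{TC2}.
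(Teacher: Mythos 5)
Your proposal is correct and follows essentially the same route as the paper: delete the vertex set of the five triangles together with the hexagon and extend a nice decomposition of the smaller graph exactly as in \autoref{fig8b}, using the matching $\{v_iu_i\}_{i=1}^{5}$ and the orientation you describe. The only divergence is cosmetic: the paper invokes \autoref{TC1} to force $u_2,\dots,u_5$ to be $4$-vertices, a fact your degree argument via \autoref{delta} and \autoref{a3} alone does not justify, but this is immaterial because your extension only needs out-degree at most $2$ (not exactly $2$) at each $u_i$.
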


\begin{proof}[Proof of \autoref{TC3}]
Let $f=[v_{1}v_{2}v_{3}v_{4}v_{5}v_{6}]$ be a 6-face, $v_{1}$ be a 3-vertex and $T_{i} = [v_{i}v_{i+1}u_{i}]$, $1 \leq i \leq 5$, be the five $3$-faces. Assume to the contrary that there is no $5^+$-vertex on $T_{i}$. By \autoref{TC1}, we may assume all $v_{i+1}$ and $u_{i}$ are 4-vertices for $1 \leq i \leq 5$, as depicted in \autoref{fig8a}. 
A nice decomposition of $G-(\bigcup_{i=1}^{5} V(T_i))$ is extended to a nice decomposition of $G$ as in  \autoref{fig8b}.
%
%
%
\end{proof}

The above lemmas present some reducible configurations. We use standard discharging method to prove that there must be some reducible configurations in a minimum counterexample,  which leads to a contradiction. 

First, we define an initial charge function by $\mu(x) = d(x) - 4$, $\mu(y) = d(y) - 4$, $\mu(f_{0}) = d(f_{0}) + 4$, and $\mu(v) = d(v) - 4$ for each vertex $v \in V(G) \setminus \{x, y\}$, $\mu(f) = d(f) - 4$ for each face $f$ other than $f_{0}$. By Euler's formula and handshaking theorem, we obtain that the sum of all the initial charges is zero, \ie 
\[
(d(x) - 4) + (d(y) - 4) + (d(f_{0}) + 4) + \sum_{v \neq x, y} (d(v) - 4) + \sum_{f \neq f_{0}} (d(f) - 4) = 0. 
\]
Next, we design some discharging rules to redistribute the charges, such that the sum of the final charges is not zero, which leads to a contradiction.

\paragraph{Discharging Rules}

\begin{enumerate}[label=\textbf{R\arabic*.}, ref=R\arabic*]
\item\label{R1} Every internal $3$-face $f$ receives $\frac{1}{3}$ from each adjacent face. 
\item\label{R2} Assume $v$ is a normal $3$-vertex. If $v$ is incident with an internal $4^-$-face, then it receives $\frac{1}{2}$ from each of the other two incident faces. Otherwise it receives $\frac{1}{3}$ from each incident face. 
\item\label{R3} Let $v$ be a normal $5$-vertex. Then $v$ sends $\frac{1}{6}$ to each incident $4^{+}$-face. If $v$ is incident with a $3$-face $g=[uvw]$, then $v$ sends $\frac{1}{6}$ to the other face $g'$ incident with $uw$. Moreover, if $v$ is incident with three consecutive faces $f_{1}, f_{2}, f_{3}$ and $f_{1}, f_{3}$ are $3$-faces, then $v$ sends an extra $\frac{1}{6}$ to $f_{2}$. 
\item\label{R4} Let $v $ be a normal $6^{+}$-vertex. Then $v$ sends $\frac{1}{3}$ to each incident $4^{+}$-face. If $v$ is incident with a $3$-face $g=[uvw]$, then $v$ sends $\frac{1}{3}$ to the other face $g'$ incident with $uw$. 
\item\label{R5} Let $v$ be a vertex in $\{x, y\}$. Then it sends $\frac{1}{3}$ to every incident internal $4^{+}$-face. If $v$ is incident with a $3$-face $g=[uvw]$, then $v$ sends $\frac{1}{3}$ to the other face $g'$ incident with $uw$. 
\item\label{R6} $f_{0}$ sends $\frac{1}{3}$ to each adjacent $4^{+}$-face. 
\item\label{RR7} In Case 2 (i.e., $G$ has no subgraph isomorphic to any configuration in \autoref{COMMONFIGURE} and \autoref{FIGURE-AT48}), every internal $5$-face receives $\frac{1}{6}$ from adjacent internal $6^{+}$-faces via each common edge.  
\item\label{RRR7} In Case 3 (i.e., $G \in \mathcal{G}_{4,9}$), every good $5$-face receives $\frac{1}{3}$ from adjacent internal $7^{+}$-faces via each common edge.
\end{enumerate}

For $z \in V(G) \cup F(G)$, let $\mu'(z)$ be the final charge of $z$. In the remainder of this paper, we prove that $\sum_{z \in V(G) \cup F(G)} \mu'(z) > 0$, which contradicts the fact that $\sum _{z \in V(G) \cup F(G)} \mu'(z) = \sum_{z \in V(G) \cup F(G)} \mu(z) = 0$.

Note that \ref{RR7} only applies to Case 2 and \ref{RRR7} only applies to Case 3. Moreover, \ref{RR7} and \ref{RRR7} only involve $5^+$-faces.

It follows from \ref{R5} that for $v\in\{x, y\}$
\[
\mu'(v) \geq \mu(v) - (d(v) - 1) \times \frac{1}{3} = \frac{2d(v) - 11}{3} \geq -\frac{7}{3}. 
\]
Note that $f_{0}$ sends $\frac{1}{3}$ to each adjacent internal face by \ref{R1} and \ref{R6}, and sends at most $\frac{1}{2}$ to each incident normal $3$-vertex by \ref{R2}. It follows from \autoref{a3} that $f_{0}$ is incident with at most $\frac{d(f_{0})}{2}$ normal $3$-vertices. Then
\[
\mu'(f_{0}) \geq \mu(f_{0}) - \frac{d(f_{0})}{2} \times \frac{1}{2} - d(f_{0}) \times \frac{1}{3} \geq \frac{5d(f_{0})}{12} + 4 \geq \frac{21}{4}. 
\]

Hence, $\mu'(x) + \mu'(y) + \mu'(f_{0}) > 0$.

Assume $v$ is a normal $3$-vertex. If $v$ is incident with an internal $4^-$-face, then the other two incident faces are $5^{+}$-faces or the outer face $f_{0}$. Hence $\mu'(v) = \mu(v) + 2 \times \frac{1}{2} = 0$. Otherwise each face incident with $v$ is a $5^{+}$-face or $f_{0}$, and $\mu'(v) = \mu(v) + 3 \times \frac{1}{3} = 0$ by \ref{R2}. 

If $v$ is a normal $4$-vertex, then $\mu'(v) = \mu(v) = 0$. If $v$ is a normal $5$-vertex, then it is incident with at most two $3$-faces, and then $\mu'(v) \geq \mu(v) - 5 \times \frac{1}{6} - \frac{1}{6} = 0$ by \ref{R3}. If $v$ is a normal $6^{+}$-vertex, then $\mu'(v) = \mu(v) - d(v) \times \frac{1}{3} = \frac{2(d(v) - 6)}{3} \geq 0$ by \ref{R4}. 

If $f$ is an internal $3$-face, then it receives $\frac{1}{3}$ via each incident edge, and $\mu'(f) = \mu(f) + 3 \times \frac{1}{3} = 0$ by \ref{R1}.

If $f$ is an internal $4$-face, then $\mu'(f) \geq \mu(f) = 0$. 

It remains to show that $\mu'(f) \geq 0$ for internal $5^+$-faces $f$. 

In the remainder of the paper, we consider the three cases separately in three subsections.

\subsection{\texorpdfstring{$G$}{G} has no subgraph isomorphic to any configuration in \autoref{COMMONFIGURE} and \autoref{FIGURE-AT567}}


Assume that $f = [v_{1}v_{2}v_{3}v_{4}v_{5}]$ is an internal $5$-face. By \autoref{cor-1}, $t_f \leq 2$. If $f$ is not adjacent to any internal $3$-face, then $\mu'(f) \geq \mu(f) - 2 \times \frac{1}{2} = 0$ by \ref{R2}. So we may assume that $f$ is adjacent to at least one internal $3$-face. Since the configurations \autoref{AT345A}--\ref{AT345D} are forbidden, $f$ is adjacent to exactly one internal $3$-face $f^{*}$ and no $4$-faces. If $t_f \leq 1$, then $\mu'(f) \geq \mu(f) - \frac{1}{3} - \frac{1}{2} > 0$ by \ref{R1} and \ref{R2}. Assume $t_f = 2$ and $f^{*} = [uv_{1}v_{2}]$ is an internal $3$-face. If there are some special vertices in $\{u, v_{1}, v_{2}, \dots, v_{5}\}$, then $f$ receives at least $\frac{1}{6}$ from special vertices, and then $\mu'(f) \geq \mu(f) - \frac{1}{3} - (\frac{1}{3} + \frac{1}{2}) + \frac{1}{6} = 0$ by \ref{R1}, \ref{R2}, \ref{R3}, \ref{R4} and \ref{R5}. So we may assume that none of $\{u, v_{1}, v_{2}, \dots, v_{5}\}$ is a special vertex. It follows that $f$ is incident with two $3$-vertices and three $4$-vertices. If neither $v_{1}$ nor $v_{2}$ is a $3$-vertex, then $\mu'(f) \geq \mu(f) - \frac{1}{3} - 2 \times \frac{1}{3} = 0$ by \ref{R1} and \ref{R2}. Without loss of generality, assume that $d(v_{2}) = 3$ and $d(v_{1}) = d(v_{3}) = d(u) = 4$. If $d(v_{4}) = 3$ and $d(v_{5}) = 4$, then it contradicts \autoref{S}. If $d(v_{4}) = 4$ and $d(v_{5}) = 3$, then it contradicts \autoref{TC1}. 

Assume that $f = [v_{1}v_{2}v_{3}v_{4}v_{5}v_{6}]$ is an internal $6$-face. By \autoref{cor-1}, $t_f \leq 3$. 

$\bullet$ $t_f = 3$. Without loss of generality, assume that $v_{1}, v_{3}$ and $v_{5}$ are normal $3$-vertices.

By \autoref{cor-1}, $s_f \le 3$. If $s_f \le 1$, then $\mu'(f) \geq \mu(f) - \frac{1}{3} - 3 \times \frac{1}{2} > 0$ by \ref{R1} and \ref{R2}. 

Assume that $s_f=2$. By symmetry, assume that one of the adjacent internal $3$-face is $[v_{1}v_{2}u]$. By \autoref{TC1}, one vertex in $\{u, v_{2}\}$ is a special vertex. Thus, $\mu'(f) \geq \mu(f) - 2 \times \frac{1}{3} - 3 \times \frac{1}{2} + \frac{1}{6} = 0$ by \ref{R1}, \ref{R2}, \ref{R3}, \ref{R4} and \ref{R5}. 

Assume that $s_f=3$. 

(i) $v_{i}v_{i+1}$ is incident with an internal $3$-face $[v_{i}v_{i+1}u_{i}]$ for $i \in \{1, 3, 5\}$. For each $i\in\{1, 3, 5\}$, by \autoref{TC1}, there is a special vertex in $\{u_{i}, v_{i+1}\}$. Thus $f$ receives at least $\frac{1}{6}$ from $\{u_{i}, v_{i+1}\}$ by \ref{R3}, \ref{R4} and \ref{R5}. Hence, $\mu'(f) \geq \mu(f) - 3 \times \frac{1}{3} - 3 \times \frac{1}{2} + 3 \times \frac{1}{6} = 0$ by \ref{R1}, \ref{R2}, \ref{R3}, \ref{R4} and \ref{R5}. 

(ii) $v_{i}v_{i+1}$ is incident with an internal $3$-face $[v_{i}v_{i+1}u_{i}]$ for $i \in \{1, 2, 5\}$. If $v_{2}$ is a special vertex, then $f$ receives $\frac{1}{3}$ from $v_{2}$. Otherwise, $v_{2}$ is a normal $4$-vertex. By \autoref{TC1}, both $u_{1}$ and $u_{2}$ are special vertices. Then $f$ receives at least $2 \times \frac{1}{6} = \frac{1}{3}$ from $u_{1}$ and $u_{2}$ by \ref{R3}, \ref{R4} and \ref{R5}. In any way, $f$ receives at least $\frac{1}{3}$ from $\{u_1, u_2, v_2\}$. On the other hand, one of $u_{5}$ and $v_{6}$ is also a special vertex, and $f$ receives at least $\frac{1}{6}$ from $\{u_{5}, v_{6}\}$ by \ref{R3}, \ref{R4} and \ref{R5}. Thus, $\mu'(f) \geq \mu(f) - 3 \times \frac{1}{3} - 3 \times \frac{1}{2} + \frac{1}{3} + \frac{1}{6} = 0$. 

$\bullet$ $t_f = 2$. By \autoref{cor-1}, $s_f \le 4$. If $s_f \le 3$, then $\mu'(f) \geq \mu(f) - 3 \times \frac{1}{3} - 2 \times \frac{1}{2} = 0$ by \ref{R1} and \ref{R2}. 

Assume $s_f=4$. We claim that $f$ will receive at least $\frac{1}{3}$ from vertices. If $f$ is incident with a $2$-vertex, then the $2$-vertex must be in $\{x, y\}$, and $f$ receives at least $\frac{1}{3}$ from incident $2$-vertices by \ref{R5}. So we may assume that $f$ is not incident with any $2$-vertex. By symmetry, it suffices to consider five cases. 

(1) The four adjacent internal $3$-faces are $[v_{i}v_{i+1}u_{i}]$ for $1 \leq i \leq 4$. Thus, the two normal $3$-vertices must be $v_{1}$ and $v_{5}$. If one of $v_{2}, v_{3}$ and $v_{4}$ is a special vertex, then $f$ receives $\frac{1}{3}$ from it by \ref{R3}, \ref{R4} and \ref{R5}. So we may assume that $v_{2}, v_{3}$ and $v_{4}$ are normal $4$-vertices. By \autoref{TC1} and \autoref{TC2}, there are at least two special vertices in $\{u_{1}, u_{2}, u_{3}, u_{4}\}$, thus $f$ receives at least $2 \times \frac{1}{6} = \frac{1}{3}$ from these vertices by \ref{R3}, \ref{R4} and \ref{R5}. 

(2) The four adjacent internal $3$-faces are $[v_{i}v_{i+1}u_{i}]$ for $i \in \{1, 2, 3, 5\}$, while $v_{1}$ and $v_{4}$ are normal $3$-vertices. Similarly, if $v_{2}$ or $v_{3}$ is a special vertex, then $f$ receives at least $\frac{1}{3}$ from it. So we may assume that $v_{2}$ and $v_{3}$ are normal $4$-vertices. By \autoref{TC1} and \autoref{TC2}, there are at least two special vertices in $\{u_{1}, u_{2}, u_{3}\}$, thus $f$ receives at least $2 \times \frac{1}{6} = \frac{1}{3}$ from these vertices by \ref{R3}, \ref{R4} and \ref{R5}. 

(3) The four adjacent internal $3$-faces are $[v_{i}v_{i+1}u_{i}]$ for $i \in \{1, 2, 3, 5\}$, while $v_{1}$ and $v_{5}$ are normal $3$-vertices. By \autoref{TC1}, $u_{5}$ or $v_{6}$ is a special vertex; one of $\{v_{2}, v_{3}, v_{4}, u_{1}, u_{2}, u_{3}\}$ is a special vertex. Thus, $f$ receives at least $2 \times \frac{1}{6} = \frac{1}{3}$ from these vertices by \ref{R3}, \ref{R4} and \ref{R5}. 

(4) The four adjacent internal $3$-faces are $[v_{i}v_{i+1}u_{i}]$ for $i \in \{1, 2, 4, 5\}$, while $v_{1}$ and $v_{3}$ are normal $3$-vertices. If $v_{2}$ is a special vertex, then $f$ receives $\frac{1}{3}$ from it by \ref{R3}, \ref{R4} and \ref{R5}. Otherwise, $v_{2}$ is a normal $4$-vertex. By \autoref{TC1}, each of $u_{1}$ and $u_{2}$ is a special vertex, thus $f$ receives at least $2 \times \frac{1}{6} = \frac{1}{3}$ from $\{u_{1}, u_{2}\}$ by \ref{R3}, \ref{R4} and \ref{R5}. 

(5) The four adjacent internal $3$-faces are $[v_{i}v_{i+1}u_{i}]$ for $i \in \{1, 2, 4, 5\}$, while $v_{1}$ and $v_{4}$ are normal $3$-vertices. By \autoref{TC1}, there is at least one special vertex in $\{u_{1}, u_{2}, v_{2}, v_{3}\}$, and there is at least one special vertex in $\{u_{4}, u_{5}, v_{5}, v_{6}\}$. Thus, $f$ receives at least $2 \times \frac{1}{6} = \frac{1}{3}$ from these vertices by \ref{R3}, \ref{R4} and \ref{R5}. 

To sum up, $f$ always receives at least $\frac{1}{3}$ from some vertices in the above five cases. Therefore, $\mu'(f) \geq \mu(f) - 4 \times \frac{1}{3} - 2 \times \frac{1}{2} + \frac{1}{3} = 0$ by \ref{R1} and \ref{R2}. 

$\bullet$ $t_f = 1$. By \autoref{cor-1}, $s_f \le 5$. If $s_f \le 4$, then $\mu'(f) \geq \mu(f) - \frac{1}{2} - 4 \times \frac{1}{3} > 0$. Assume that $s_f=5$ and for $1 \leq i \leq 5$, $[v_{i}v_{i+1}u_{i}]$ is an internal 3-face. Let $X = \{v_{1}, \dots, v_{6}, u_{1}, \dots, u_{5}\}$. By \autoref{TC3}, there is a special vertex in $X$. Therefore, $f$ receives at least $\frac{1}{6}$ from the special vertices in $X$, and $\mu'(f) \geq \mu(f) - \frac{1}{2} - 5 \times \frac{1}{3} + \frac{1}{6} = 0$ by \ref{R3}, \ref{R4} and \ref{R5}. 

$\bullet$ $t_f = 0$. Then $f$ sends nothing to incident vertices, and $\mu'(f) \geq \mu(f) - 6 \times \frac{1}{3} = 0$.

If $f$ is an internal $7^{+}$-face, then $f$ sends out charges by \ref{R1} and \ref{R2}. As $t_f+s_f \le d(f)$, we have 
\[
\mu'(f) \geq \mu(f) - \frac{s_f}{3} - \frac{t_f}{2} \ge \frac{2}{3}d(f) - 4 - \frac{t_f}{6} \geq \frac{7}{12}d(f) - 4 > 0.
\]

This completes the proof of Case 1 of \autoref{thm-main}. 

\subsection{\texorpdfstring{$G$}{G} has no subgraph isomorphic to any configuration in \autoref{COMMONFIGURE} and \autoref{FIGURE-AT48}}

Lemma \ref{LEM-ADJACENTFIVEFACE} below follows easily from the fact that configurations in \autoref{COMMONFIGURE} and \autoref{FIGURE-AT48} are forbidden.

\begin{figure}
\centering
\begin{tikzpicture}
\def\s{1};
\coordinate (V0) at (90:\s);
\coordinate (V1) at (162:\s);
\coordinate (V2) at (234:\s);
\coordinate (V3) at (306:\s);
\coordinate (V4) at (18:\s);
\coordinate (A) at (220:2.5*\s);
\coordinate (B) at (320:2.5*\s);
\draw (V0)--(V1)--(V2)--(V3)--(V4)--cycle;
\draw (V1)--(A)--(B)--(V4);
\node () at (0, 0) {$5$-face};
\node () at (0, 1.5*\s) {$5$-face};
\draw (V2)--($(V2)!0.3!(A)$);
\draw (V2)--($(V2)!0.3!25:(A)$);
\draw (V3)--($(V3)!0.3!(B)$);
\draw (V3)--($(V3)!0.3!-25:(B)$);
\node[circle, inner sep = 1.5, fill, draw] () at (V0) {};
\node[circle, inner sep = 1.5, fill = white, draw] () at (V1) {};
\node[circle, inner sep = 1.5, fill = white, draw] () at (V2) {};
\node[circle, inner sep = 1.5, fill = white, draw] () at (V3) {};
\node[circle, inner sep = 1.5, fill = white, draw] () at (V4) {};
\node[circle, inner sep = 1.5, fill = white, draw] () at (A) {};
\node[circle, inner sep = 1.5, fill = white, draw] () at (B) {};
\end{tikzpicture}
\caption{Two adjacent $5$-faces. The solid vertex is a $2$-vertex in $G$.}
\label{ADJACENTFIVEFACE}
\end{figure}
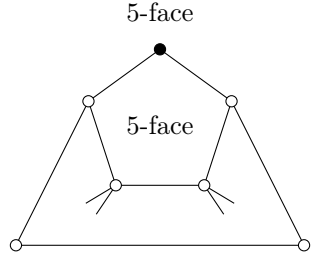

\begin{lemma}\label{LEM-ADJACENTFIVEFACE}
If two $5$-faces have two consecutive common edges on their boundaries, then one of the $5$-face is the outer face $f_{0}$ (see \autoref{ADJACENTFIVEFACE}). 
\end{lemma}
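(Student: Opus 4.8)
The plan is to show that the vertex common to the two shared edges is forced to be a $2$-vertex, and then to invoke \autoref{delta}. Write $f_1,f_2$ for the two $5$-faces, and let $uv$ and $vw$ be their two consecutive common edges, so that $uv$ and $vw$ are edges of both $\partial f_1$ and $\partial f_2$. Since $G$ is $2$-connected by \autoref{TWOCONNECTED}, the boundary of every face is a cycle; in particular $\partial f_1$ and $\partial f_2$ are $5$-cycles, and on each of them the two neighbours of $v$ are exactly $u$ and $w$.

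The key local observation is the following. Because $f_1\neq f_2$ and both faces contain the edges $uv$ and $vw$, the faces $f_1$ and $f_2$ lie on opposite sides of each of $uv$ and $vw$; hence the angular sector at $v$ occupied by $f_1$ together with the angular sector at $v$ occupied by $f_2$ fill the entire rotation around $v$. Consequently $v$ is incident with no edge other than $uv$ and $vw$, that is, $d(v)=2$. This is exactly the picture in \autoref{ADJACENTFIVEFACE}, with $v$ the marked $2$-vertex. (If $f_1$ and $f_2$ happen to share more common edges besides $uv$ and $vw$, this changes nothing in the count at $v$, so the argument is unaffected.)

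To finish, apply \autoref{delta}: a vertex of degree at most $2$ must lie in $\{x,y\}$, so $v\in\{x,y\}$. Since $e=xy$ is a boundary edge, both $x$ and $y$ are incident with the outer face $f_0$, so in particular $v$ is incident with $f_0$. But $d(v)=2$, so $v$ is incident with exactly two faces; these must be $f_1$ and $f_2$, because $uv$ and $vw$ lie on the boundary of both. Therefore $f_0\in\{f_1,f_2\}$, which is precisely the assertion.

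I do not anticipate a real obstacle: once planarity is used to pin down $d(v)=2$, the statement is immediate from \autoref{delta}. The one subtlety worth flagging is that one should not try to derive an outright contradiction here — a $2$-vertex is permitted in $G$, but only at $x$ or $y$ — which is exactly why the conclusion must be phrased as ``one of the $5$-faces is $f_0$'' rather than ``this configuration cannot occur''.
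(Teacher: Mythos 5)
Your proof is correct, and it is in substance the argument the paper intends: the paper gives no written proof of \autoref{LEM-ADJACENTFIVEFACE} (it is only asserted to follow from the exclusion of the configurations in \autoref{COMMONFIGURE} and \autoref{FIGURE-AT48}), but \autoref{ADJACENTFIVEFACE} marks the middle vertex of the shared path as a $2$-vertex, which is exactly what you establish before invoking \autoref{delta}. Your chain of reasoning is sound: by \autoref{TWOCONNECTED} every face boundary is a cycle, so each of $f_1,f_2$ occupies a single corner at $v$ delimited by $uv$ and $vw$; since $f_1\neq f_2$ are the two faces of each of these edges, no further edge can leave $v$, so $d(v)=2$, whence $v\in\{x,y\}$, $v$ lies on $f_0$ (as $xy$ is a boundary edge), and the only two faces at $v$ are $f_1$ and $f_2$. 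One point of comparison worth recording: your argument uses only planarity, $2$-connectedness and \autoref{delta}, and not the forbidden configurations at all — indeed the union of two $5$-cycles sharing a $2$-path is a theta graph with branch paths of lengths $2,3,3$, which is not among (and does not contain) the excluded subgraphs, so the paper's attribution of this lemma to the forbidden configurations is loose. The forbidden configurations (\autoref{AT48A}, \autoref{AT48B}) are what rule out the other ways two internal $5$-faces could share an edge, and are thus needed for the stronger conclusion drawn later (``there are no adjacent internal $5$-faces''), but not for the statement of this lemma itself. You also correctly flag that one should not aim for a contradiction here: the configuration is realizable with $v\in\{x,y\}$, which is why the conclusion is that one of the two faces is $f_0$.
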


Now we calculate the final charge of internal $5^+$-faces.

Assume $f$ is an internal $d$-face. If $f$ is incident with a $2$-vertex, then the $2$-vertex belongs to $\{x, y\}$, and $f$ is adjacent to at most $d - 2$ internal faces. By \ref{R5}, $f$ receives $\frac{1}{3}$ from each of $x$ and $y$. By \ref{R6}, $f$ receives $\frac{1}{3}$ via each common edge with the outer face $f_{0}$. By \ref{R1} and \ref{RR7}, $f$ sends at most $\frac{1}{3}$ to each adjacent internal face. By \ref{R2}, $f$ sends at most $\frac{1}{2}$ to each incident normal $3$-vertex. Thus, $\mu'(f) \geq d - 4 + 2 \times \frac{1}{3} + 2 \times \frac{1}{3} - (d - 2) \times \frac{1}{3} - \lfloor \frac{d}{2} \rfloor \times \frac{1}{2} \geq \frac{5d - 24}{12} > 0$. 

Assume that $f$ is not incident with any $2$-vertex. By \autoref{LEM-ADJACENTFIVEFACE}, there are no adjacent internal $5$-faces. By \autoref{a3}, $f$ is adjacent to at most $d - t_f$ internal $5$-faces.

$\blacksquare$\quad$ \bm{d = 5}$. Assume that $f = [v_{1}v_{2}v_{3}v_{4}v_{5}]$. Since adjacent triangles and a triangle normally adjacent to a $7$-cycle are forbidden, $s_f \le 2$. By \autoref{cor-1}, $t_f \leq 2$. It follows that $f$ is incident with at most two minor $3$-vertices.

If $s_f=0$, then $\mu'(f) \geq \mu(f) - 2 \times \frac{1}{2} = 0$ by \ref{R2}. 

Assume $s_f \ge 1$. Since \autoref{COMMONFIGURE} and \autoref{AT48C} are forbidden, $f$ is not adjacent to any $4$-face. It follows that every face adjacent to $f$ is a $3$-face or a $6^{+}$-face. Thus, $f$ is adjacent to at least three $6^{+}$-faces (the number of adjacent $6^{+}$-faces is counted by the number of common edges). If $f$ is incident with at most one minor $3$-vertex, then $\mu'(f) \geq 5 - 4 - 2 \times \frac{1}{3} - (\frac{1}{2} + \frac{1}{3}) + 3 \times \frac{1}{6} = 0$ by \ref{R1}, \ref{R2} and \ref{RR7}. Assume $f$ is incident with exactly two minor $3$-vertices. That is $t_f=2$ and $s_f=2$. By symmetry, we have three subcases to consider: 

$\bullet$ $f$ is adjacent to two internal $3$-faces $[v_{1}v_{2}u_{1}]$, $[v_{3}v_{4}u_{3}]$, and $v_{1}, v_{3}$ are minor $3$-vertices.

$\bullet$ $f$ is adjacent to two internal $3$-faces $[v_{1}v_{2}u_{1}]$, $[v_{3}v_{4}u_{3}]$, and $v_{1}, v_{4}$ are minor $3$-vertices.

$\bullet$ $f$ is adjacent to two internal $3$-faces $[v_{1}v_{2}u_{1}]$, $[v_{2}v_{3}u_{2}]$, and $v_{1}, v_{3}$ are minor $3$-vertices. 

\noindent By \autoref{TC1} and \autoref{TC2}, the two $3$-faces are incident with at least one special vertex. By \ref{R3}, \ref{R4} and \ref{R5}, $f$ receives at least $\frac{1}{6}$ from these special vertices. Hence, $\mu'(f) \geq 5 - 4 + \frac{1}{6} + 3 \times \frac{1}{6} - 2 \times \frac{1}{3} - 2 \times \frac{1}{2} = 0$.

$\blacksquare$\quad $\bm{d = 6}$. Assume that $f = [v_{1}v_{2}v_{3}v_{4}v_{5}v_{6}]$. If $s_f=0$, then it sends at most $\frac{1}{2}$ to each incident normal $3$-vertex, and sends $\frac{1}{6}$ to each adjacent $5$-face, thus $\mu'(f) \geq 6 - 4 - t_f \times \frac{1}{2} - (6 - t_f) \times \frac{1}{6} = 1 - \frac{t_f}{3} \geq 0$ by \ref{R2} and \ref{RR7}. 

Suppose that $f$ is adjacent to an internal $3$-face. Then they are normally adjacent. Since the configurations in \autoref{COMMONFIGURE} and \autoref{AT48C} are forbidden, $s_f=1$. By \autoref{cor-1}, $t_f \leq 3$. If $t_f\le 2$, then $\mu'(f) \geq 6 - 4 - \frac{1}{3} - t_f \times \frac{1}{2} - (6 - t_f) \times \frac{1}{6} = \frac{2 - t_f}{3} \geq 0$ by \ref{R1}, \ref{R2} and \ref{RR7}. 

Assume $t_f=3$ and the $3$-face is $[uv_{1}v_{2}]$. By \autoref{a3}, we may assume $v_{1}, v_{3}$ and $v_{5}$ are the three normal $3$-vertices. By \autoref{TC1}, there is a special vertex in $\{u, v_{2}\}$, thus $f$ receives at least $\frac{1}{6}$ from $\{u, v_{2}\}$. Since the configurations in \autoref{COMMONFIGURE} and \autoref{FIGURE-AT48} are all forbidden, $v_{5}$ cannot be incident with an internal $4^{-}$-face. Thus, $f$ is incident with at most two minor $3$-vertices, which implies that $\mu'(f) \geq 6 - 4 - (2 \times \frac{1}{2} + \frac{1}{3}) - \frac{1}{3} - (6 - 3) \times \frac{1}{6} + \frac{1}{6} = 0$.

$\blacksquare$\quad $\bm{d = 7}$. Let $f$ be a $7$-face. As \autoref{AT48C} is forbidden, $s_f = 0$. By \autoref{cor-1}, $t_f \le 3$. By \ref{R2}, $f$ sends at most $\frac{1}{2}$ to each incident normal  $3$-vertex. By \ref{RR7}, $f$ sends $\frac{1}{6}$ to each adjacent internal $5$-face. Hence, $\mu'(f) \geq 7 - 4 - t_f \times \frac{1}{2} - (7 - t_f) \times \frac{1}{6} = \frac{11 - 2t_f}{6} > 0$. 

$\blacksquare$\quad $\bm{d \ge 8}$. Let $f$ be a $8^+$-face. Then  $f$ sends at most $\frac{1}{2}$ to each incident normal $3$-vertex, and $\frac{1}{3}$ to each adjacent internal $3$-face, and $\frac{1}{6}$ to each adjacent internal $5$-face. Combining with \autoref{cor-1}, we have that

\[
\mu'(f) \geq d - 4 - t_f\times\frac{1}{2} - s_f\times\frac{1}{3}  -(d-s_f)\times\frac{1}{6} =\frac{5}{6}d -\frac{1}{2}t_f-\frac{1}{6}s_f-4\geq \frac{d}{2}-4 \geq 0.
\]


This completes the proof of Case 2. 

\subsection{\texorpdfstring{$G \in \mathcal{G}_{4,9}$}{G4,9}}

\begin{lemma}\label{TRIANGULAREDGE}
A $5$-cycle contains at most three triangular edges. 
\end{lemma}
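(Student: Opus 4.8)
The plan is to argue by contradiction: assume a $5$-cycle $C=[v_1v_2v_3v_4v_5]$ has at least four triangular edges. Since $C$ has only five edges, at most one of them is non-triangular, so deleting that edge (if it exists) leaves a contiguous arc of four triangular edges; after relabelling we may assume $v_1v_2$, $v_2v_3$, $v_3v_4$, $v_4v_5$ are triangular. As noted in the excerpt, $\mathcal{G}_{4,9}$ contains no adjacent triangles, hence each triangular edge lies in a unique triangle; let $u_i$ be the third vertex of the triangle on $v_iv_{i+1}$ for $i\in\{1,2,3,4\}$. The target contradiction will be that $v_1u_1v_2u_2v_3u_3v_4u_4v_5v_1$ is a $9$-cycle, which is forbidden since $G\in\mathcal{G}_{4,9}$.

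For this I first need that the nine vertices $v_1,\dots,v_5,u_1,u_2,u_3,u_4$ are pairwise distinct. That no $u_i$ lies on $C$ follows because $u_i=v_j$ with $j\notin\{i,i+1\}$ would force $v_j$ to be adjacent in $G$ to both $v_i$ and $v_{i+1}$; since two consecutive vertices of a $5$-cycle have no common neighbour on the cycle, at least one of $v_iv_j,\,v_{i+1}v_j$ is then a chord of $C$, and a chord of a $5$-cycle always splits it into a triangle and a $4$-cycle — contradiction. That the $u_i$ are distinct splits into two flavours. A coincidence $u_i=u_{i+1}$ would yield two triangles sharing the edge at $v_{i+1}$, i.e.\ adjacent triangles, contradiction. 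A coincidence $u_i=u_j$ with $j\ge i+2$ makes that common vertex $u$ adjacent to a suitable quadruple among $v_1,\dots,v_5$, so that $u$ closes a length-two subpath of $C$ into a $4$-cycle: $u_1=u_3$ gives the $4$-cycle on $u,v_1,v_5,v_4$; $u_1=u_4$ gives the $4$-cycle on $u,v_2,v_3,v_4$; $u_2=u_4$ gives the $4$-cycle on $u,v_2,v_1,v_5$ — again contradicting the absence of $4$-cycles.

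Once distinctness is established, the closing step is immediate: each $v_iu_i$ and $u_iv_{i+1}$ is an edge of the $i$-th triangle, and $v_5v_1$ is an edge of $C$, so $v_1u_1v_2u_2v_3u_3v_4u_4v_5v_1$ is a cycle on nine distinct vertices, contradicting $G\in\mathcal{G}_{4,9}$. Hence $C$ contains at most three triangular edges.

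I expect the only genuine bookkeeping — and hence the main (though minor) obstacle — to be the distinctness argument for the $u_i$: one must keep separate the ``adjacent'' coincidence $u_i=u_{i+1}$, which produces adjacent triangles rather than a short cycle, from the ``non-adjacent'' coincidences, which each produce a $4$-cycle, and in the latter case check in each of the three sub-cases that the relevant two-edge subpath of $C$ together with two triangle-edges really closes into a $4$-cycle on four distinct vertices. Spotting the $9$-cycle $v_1u_1v_2u_2v_3u_3v_4u_4v_5$ is the crucial idea, and it is precisely what makes the exclusion of $9$-cycles indispensable in this case.
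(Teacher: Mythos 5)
Your proof is correct and follows essentially the same route as the paper: take the four (consecutive, after relabelling) triangular edges, use the absence of $4$-cycles to show the five cycle vertices and four triangle apexes are nine distinct vertices, and then exhibit the forbidden $9$-cycle $v_1u_1v_2u_2v_3u_3v_4u_4v_5$. The only difference is that you spell out the distinctness case analysis, which the paper compresses into one sentence.
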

\begin{proof}
Assume $[x_{1}x_{2}x_{3}x_{4}x_{5}]$ is a $5$-cycle, and $[x_{1}x_{2}x_{6}], [x_{2}x_{3}x_{7}], [x_{3}x_{4}x_{8}]$ and $[x_{4}x_{5}x_{9}]$ are four triangles. Since there is no $4$-cycle in $G$, $x_{1}, x_{2}, \dots, x_{9}$ are nine distinct vertices. Thus, $[x_{1}x_{6}x_{2}x_{7}x_{3}x_{8}x_{4}x_{9}x_{5}]$ is a $9$-cycle, a contradiction. 
\end{proof}

\begin{lemma}\label{FIVEFACE}
Let $f = [x_{1}x_{2}x_{3}x_{4}x_{5}]$ and $g = [x_{5}x_{1}uvw]$ be two adjacent $5$-faces. If $d(x_{1}) \geq 3$ and $d(x_{5}) \geq 3$, then $f$ and $g$ are normally adjacent, and neither $x_{2}x_{3}$ nor $x_{3}x_{4}$ is adjacent to a $3$-face. Moreover, if $x_{1}x_{2}$ is incident with a $3$-face, then $x_{1}$ is a $3$-vertex and the $3$-face is $[x_{1}x_{2}u]$. 
\end{lemma}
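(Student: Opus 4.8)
The plan is to use the hypothesis $G\in\mathcal{G}_{4,9}$ as a double prohibition --- no $4$-cycle and no $9$-cycle --- and to obtain every assertion by exhibiting a forbidden cycle of one of these two lengths whenever a conclusion fails. Throughout I use that $G$ is $2$-connected (\autoref{TWOCONNECTED}), so every face is bounded by a cycle; in particular $f$ is bounded by $x_{1}x_{2}x_{3}x_{4}x_{5}$, $g$ by $x_{5}x_{1}uvw$, these share the edge $x_{1}x_{5}$, and $u,v,w\notin\{x_{1},x_{5}\}$.

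First I would show that $f$ and $g$ are normally adjacent, i.e.\ $V(f)\cap V(g)=\{x_{1},x_{5}\}$, since this underpins the later parts. One only has to check that no vertex of $\{x_{2},x_{3},x_{4}\}$ equals a vertex of $\{u,v,w\}$. The coincidences $x_{2}=u$ and $x_{4}=w$ are excluded by the degree hypotheses: since $f$ and $g$ are the two faces along $x_{1}x_{5}$ and $d(x_{1})\ge 3$, the edge after $x_{1}x_{5}$ in the rotation at $x_{1}$ on the $f$-side, which is $x_{1}x_{2}$, differs from the one on the $g$-side, which is $x_{1}u$, so $x_{2}\ne u$; symmetrically $d(x_{5})\ge 3$ forces $x_{4}\ne w$. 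Every remaining coincidence yields a chord of one of the two $5$-cycles and hence a $4$-cycle --- e.g.\ $x_{3}=u$ gives $x_{1}x_{3}x_{4}x_{5}$, $x_{2}=w$ gives $x_{2}x_{3}x_{4}x_{5}$, $x_{2}=v$ gives $x_{1}x_{2}wx_{5}$, and $x_{3}=v$, $x_{3}=w$, $x_{4}=u$, $x_{4}=v$ are symmetric. Hence $f$ and $g$ are normally adjacent, so the eight vertices $x_{1},\dots,x_{5},u,v,w$ are pairwise distinct.

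Next I would treat the edges $x_{2}x_{3}$ and $x_{3}x_{4}$; by the reflection $x_{1}\leftrightarrow x_{5}$, $x_{2}\leftrightarrow x_{4}$, $u\leftrightarrow w$, which fixes both $f$ and $g$, it suffices to treat $x_{2}x_{3}$. If $x_{2}x_{3}$ were on a $3$-face $[x_{2}x_{3}z]$, then walking around the outside of the two $5$-cycles and closing through the triangle gives the closed walk $x_{3}x_{4}x_{5}wvu\,x_{1}x_{2}zx_{3}$; its eight vertices other than $z$ are distinct by normal adjacency, so if $z\notin\{x_{1},\dots,x_{5},u,v,w\}$ this is a genuine $9$-cycle, a contradiction; and if $z$ is one of those eight, then, using $z\sim x_{2}$ and $z\sim x_{3}$, each case forces a $4$-cycle ($z=x_{4}$: $x_{1}x_{2}x_{4}x_{5}$; $z=x_{5}$: $x_{2}x_{3}x_{4}x_{5}$; $z=x_{1}$: $x_{1}x_{3}x_{4}x_{5}$; $z=w$: $x_{3}x_{4}x_{5}w$; $z=u$: $x_{1}x_{2}x_{3}u$; $z=v$: $ux_{1}x_{2}v$). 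So neither $x_{2}x_{3}$ nor $x_{3}x_{4}$ is adjacent to a $3$-face.

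Finally, if $x_{1}x_{2}$ is incident with a $3$-face $[x_{1}x_{2}t]$, the analogous closed walk $tx_{1}uvwx_{5}x_{4}x_{3}x_{2}t$ is a $9$-cycle unless $t\in\{x_{1},\dots,x_{5},u,v,w\}$, and checking the alternatives (each forcing a $4$-cycle: $t=x_{3}$ or $x_{5}$ give $x_{1}x_{3}x_{4}x_{5}$ or $x_{2}x_{3}x_{4}x_{5}$; $t=x_{4}$ gives $x_{1}x_{2}x_{4}x_{5}$; $t=w$ gives $x_{2}wx_{5}x_{1}$; $t=v$ gives $vx_{1}x_{5}w$) leaves only $t=u$, so the $3$-face is $[x_{1}x_{2}u]$. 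To see $d(x_{1})=3$: the edges $x_{1}x_{5}$, $x_{1}x_{2}$, $x_{1}u$ at $x_{1}$ have the three faces $f$, $[x_{1}x_{2}u]$, $g$ in the three corners between the consecutive pairs $(x_{1}x_{5},x_{1}x_{2})$, $(x_{1}x_{2},x_{1}u)$, $(x_{1}u,x_{1}x_{5})$, so these three edges already close the rotation at $x_{1}$, and any further edge there would lie inside one of $f$, $[x_{1}x_{2}u]$, $g$ and split it, which is impossible; hence $d(x_{1})=3$. The only delicate points are the completeness of the two finite case lists --- verifying that every degenerate position of $z$ (resp.\ $t$) lies in a $4$-cycle --- together with the short rotation argument pinning down $d(x_{1})=3$; nothing here is deep, but the case analysis must be exhaustive.
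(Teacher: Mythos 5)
Your proposal is correct and follows essentially the same route as the paper: the degree hypotheses rule out $x_{2}=u$ and $x_{4}=w$, the absence of $4$-cycles forces all eight vertices to be distinct, and each remaining claim is obtained by exhibiting a $9$-cycle through $f$, $g$ and the putative triangle. Your extra details (the exhaustive placement checks for $z$ and $t$, and the rotation argument for $d(x_{1})=3$) only spell out steps the paper leaves implicit.
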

\begin{proof}
Since $d(x_{1}) \geq 3$ and $d(x_{5}) \geq 3$, we have that $ x_{2}\ne u$ and $x_{4}\ne w$. Since $G$ has no $4$-cycle, $x_{1}, x_{2}, \dots, x_{5}, u, v, w$ are distinct. Therefore, $f$ and $g$ are normally adjacent. 

By the symmetry of $x_{2}x_{3}$ and $x_{3}x_{4}$, suppose that $x_{2}x_{3}$ is incident with a $3$-face $[x_{2}x_{3}x_{7}]$. Since there are no $4$-cycles in $G$, $x_{7}$ is not incident with $f$ or $g$. Thus, $[x_{5}x_{4}x_{3}x_{7}x_{2}x_{1}uvw]$ is a $9$-cycle, a contradiction. Hence, neither $x_{2}x_{3}$ nor $x_{3}x_{4}$ is incident with a $3$-face. 

Let $x_{1}x_{2}$ be incident with a $3$-face $[x_{1}x_{2}x_{6}]$. Since $f$ has no chord, $x_{6} \notin \{x_{3}, x_{4}, x_{5}, v, w\}$. If $x_{6} \neq u$, then $[x_{5}x_{4}x_{3}x_{2}x_{6}x_{1}uvw]$ is a $9$-cycle, a contradiction. Thus $x_{6} = u$ and $x_{1}$ is a $3$-vertex. 
\end{proof}

\begin{lemma}\label{SIXFACE}
Let $f = [x_{1}x_{2}x_{3}x_{4}x_{5}]$ and $g = [x_{5}x_{1}upqw]$ be two adjacent faces. If $d(x_{1}) \geq 3$ and $d(x_{5}) \geq 3$, then $\{u, w\} \cap \{x_{1}, \dots, x_{5}\} = \emptyset$, while $\{p, q\} \cap \{x_{2}, x_{3}, x_{4}\} = \{p\} = \{x_{2}\}$ or $\{p, q\} \cap \{x_{2}, x_{3}, x_{4}\} = \{q\} = \{x_{4}\}$. 
\end{lemma}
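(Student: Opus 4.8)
The plan is to mimic, with the obvious numerical change, the proof of \autoref{FIVEFACE}: since $g=[x_5x_1upqw]$ has one more vertex than a $5$-face, the ``bad'' cycle that a coincidence would create becomes a $9$-cycle rather than an $8$-cycle, and this is exactly where $G\in\mathcal{G}_{4,9}$ bites. Throughout I would use that $G$ is $2$-connected (\autoref{TWOCONNECTED}), so every face boundary is a cycle; in particular $x_1,\dots,x_5$ are distinct and $x_5,x_1,u,p,q,w$ are distinct, and $g$ is genuinely a $6$-face. Write $P=x_1x_2x_3x_4x_5$ for the $x_1$--$x_5$ path along the boundary of $f$ that avoids the edge $x_1x_5$, and $Q=x_1upqwx_5$ for the corresponding path along the boundary of $g$. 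First I would observe: if $P$ and $Q$ were internally vertex-disjoint then $P\cup Q$ would be a cycle of length $4+5=9$, contradicting $G\in\mathcal{G}_{4,9}$. Hence $P$ and $Q$ share an internal vertex, i.e.\ $\{x_2,x_3,x_4\}\cap\{u,p,q,w\}\neq\emptyset$.

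Next I would locate this common vertex, starting by proving $\{u,w\}\cap\{x_1,\dots,x_5\}=\emptyset$. Since $g$ is a cycle, $u,w\notin\{x_1,x_5\}$. The hypothesis $d(x_1)\ge 3$ gives $u\neq x_2$: the edge $x_1x_5$ has two distinct neighbours in the rotation at $x_1$, namely the edge following $x_1x_5$ inside $f$ (which is $x_1x_2$) and the edge following $x_1x_5$ inside $g$ (which is $x_1u$), so $x_1x_2\neq x_1u$. Symmetrically $d(x_5)\ge 3$ gives $w\neq x_4$. The remaining coincidences are killed by $4$-cycle-freeness: $u=x_3$ produces the $4$-cycle $x_1x_3x_4x_5$ and $u=x_4$ produces $x_1x_2x_3x_4$, while $w=x_3$ produces $x_5x_3x_2x_1$ and $w=x_2$ produces $x_5x_2x_3x_4$. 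This establishes the first assertion of the lemma and, combined with the previous paragraph, forces the common internal vertex to lie in $\{p,q\}$.

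Finally I would pin down $p$ and $q$, again via $4$-cycles. If $p\in\{x_2,x_3,x_4\}$ then $p=x_2$: using $u\notin\{x_1,\dots,x_5\}$ from the previous step, $p=x_3$ yields the $4$-cycle $x_1ux_3x_2$ and $p=x_4$ yields $x_1ux_4x_5$. Symmetrically, $q\in\{x_2,x_3,x_4\}$ forces $q=x_4$. Moreover $p=x_2$ and $q=x_4$ cannot both hold, for then $pq=x_2x_4$ would be an edge and $x_1x_2x_4x_5$ a $4$-cycle. Since at least one of $p,q$ lies in $\{x_2,x_3,x_4\}$, exactly one of ``$p=x_2$'', ``$q=x_4$'' occurs, and then the other of $p,q$ avoids $\{x_2,x_3,x_4\}$ entirely; this is precisely the stated dichotomy $\{p,q\}\cap\{x_2,x_3,x_4\}=\{p\}=\{x_2\}$ or $\{p,q\}\cap\{x_2,x_3,x_4\}=\{q\}=\{x_4\}$.

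I expect the only non-mechanical point to be the use of $d(x_1),d(x_5)\ge 3$ to exclude $u=x_2$ and $w=x_4$: this is the one place where the planar embedding, rather than merely the cycle structure of $G$, is used, via the fact that the two faces bordering an edge determine the two rotation-neighbours of that edge, which coincide only at a $2$-vertex. Everything else is bookkeeping: listing the short cycle produced by each forbidden coincidence, and invoking the single forbidden $9$-cycle $P\cup Q$. A minor care point is extracting the final ``exactly one'' dichotomy cleanly from the three ingredients (nonempty intersection; the only permitted coincidences are $p=x_2$ or $q=x_4$; not both), but this is immediate once those three are in hand.
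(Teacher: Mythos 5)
Your proposal is correct and follows essentially the same route as the paper: non-empty intersection of the two boundary paths forced by the absence of $9$-cycles, exclusion of $u=x_2$ and $w=x_4$ via $d(x_1),d(x_5)\ge 3$, and elimination of all other coincidences by $4$-cycle-freeness, yielding the stated dichotomy. Your extra check that $p=x_2$ and $q=x_4$ cannot hold simultaneously (via the $4$-cycle $x_1x_2x_4x_5$) is a small completeness point the paper leaves implicit, not a different argument.
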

\begin{proof}
Since $G$ has no $9$-cycle, $\{x_2, x_3, x_4\}\cap\{u, p, q, w\}\ne\emptyset$. For $d(x_{1}) \geq 3$ and $d(x_{5}) \geq 3$, we have that $x_{2}\ne u$ and $x_{4}\ne w$. Note that there are no $4$-cycles, it follows that $\{x_2, x_3, x_4\}\cap\{u, w\}=\emptyset$, $x_{3} \notin \{p, q\}$, $x_4\ne p$ and $x_2\ne q$. Therefore, $\{p, q\} \cap \{x_{2}, x_{4}\} = \{p\} = \{x_{2}\}$ or $\{p, q\} \cap \{x_{2}, x_{4}\} = \{q\} = \{x_{4}\}$. 
\end{proof}

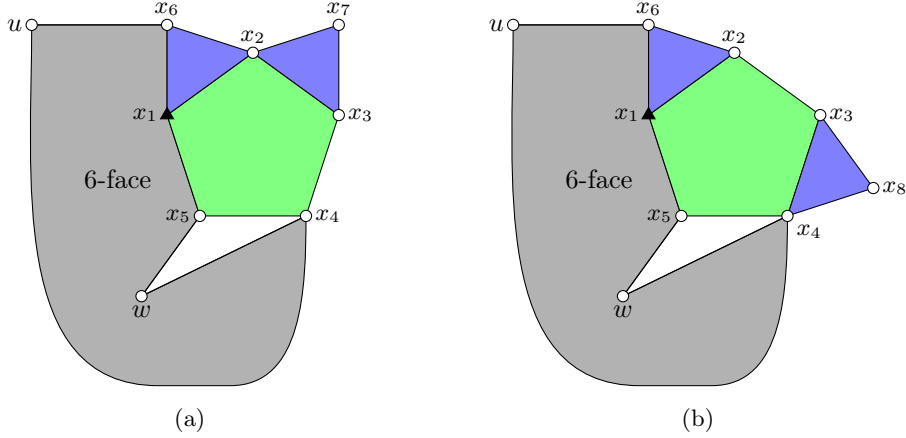
\begin{figure}
\centering
\subcaptionbox{\label{N49FIVE1}}[0.4\linewidth]{\begin{tikzpicture}
\def\s{1.2}
\foreach \ang in {1, 2, 3, 4, 5}
{
\def\pointname{v\ang}
\coordinate (\pointname) at ($(\ang*360/5-54:\s)$);}
\coordinate (NE) at ($(v1)+(v2)$);
\coordinate (NW) at ($(v2)+(v3)$);
\coordinate (u) at ($(NW)+(-1.5*\s,0)$);
\coordinate (w) at ($2.1*(v4)$);
\draw[fill = gray!60] (u) to [out=-90, in = 180] ($(u)+(1.4*\s, -4*\s)$) to ($(u)+(2.2*\s, -4*\s)$) to [out=0, in= -90] (v5)--(w)--(v4)--(v3)--(NW)--(u)--cycle;
\draw[fill = green!50] (v1) node[right]{\small$x_{3}$}--(v2)node[above]{\small$x_{2}$}--(v3)node[left]{\small$x_{1}$}--(v4)node[left]{\small$x_{5}$}--(v5)node[right]{\small$x_{4}$}--cycle;
\draw[fill = blue!50] (v1)--(NE)node[above]{\small$x_{7}$}--(v2)--cycle;
\draw[fill = blue!50] (v2)--(NW)node[above]{\small$x_{6}$}--(v3)--cycle;
\draw (NW)--(u)node[left]{$u$};
\draw (v4)--(w)node[below]{$w$};
\draw (w)--(v5);
\node[circle, inner sep = 1.5, fill = white, draw] () at (v1) {};
\node[circle, inner sep = 1.5, fill = white, draw] () at (v2) {};
\node[regular polygon, regular polygon sides=3, inner sep = 1, fill, draw] () at (v3) {};
\node[circle, inner sep = 1.5, fill = white, draw] () at (v4) {};
\node[circle, inner sep = 1.5, fill = white, draw] () at (v5) {};
\node[circle, inner sep = 1.5, fill = white, draw] () at (NE) {};
\node[circle, inner sep = 1.5, fill = white, draw] () at (NW) {};
\node[circle, inner sep = 1.5, fill = white, draw] () at (u) {};
\node[circle, inner sep = 1.5, fill = white, draw] () at (w) {};
\node () at (-1.5*\s, -0.4*\s) {$6$-face};
\end{tikzpicture}}
\subcaptionbox{\label{N49FIVE2}}[0.4\linewidth]{\begin{tikzpicture}
\def\s{1.2}
\foreach \ang in {1, 2, 3, 4, 5}
{
\def\pointname{v\ang}
\coordinate (\pointname) at ($(\ang*360/5-54:\s)$);}
\coordinate (SE) at ($(v1)+(v5)$);
\coordinate (NW) at ($(v2)+(v3)$);
\coordinate (u) at ($(NW)+(-1.5*\s,0)$);
\coordinate (w) at ($2.1*(v4)$);
\draw[fill = gray!60] (u) to [out=-90, in = 180] ($(u)+(1.4*\s, -4*\s)$) to ($(u)+(2.2*\s, -4*\s)$) to [out=0, in= -90] (v5)--(w)--(v4)--(v3)--(NW)--(u)--cycle;
\draw[fill = green!50] (v1) node[right]{\small$x_{3}$}--(v2)node[above]{\small$x_{2}$}--(v3)node[left]{\small$x_{1}$}--(v4)node[left]{\small$x_{5}$}--(v5)node[below right]{\small$x_{4}$}--cycle;
\draw[fill = blue!50] (v1)--(SE)node[right]{\small$x_{8}$}--(v5)--cycle;
\draw[fill = blue!50] (v2)--(NW)node[above]{\small$x_{6}$}--(v3)--cycle;
\draw (NW)--(u)node[left]{$u$};
\draw (v4)--(w)node[below]{$w$};
\draw (w)--(v5);
\node[circle, inner sep = 1.5, fill = white, draw] () at (v1) {};
\node[circle, inner sep = 1.5, fill = white, draw] () at (v2) {};
\node[regular polygon, regular polygon sides=3, inner sep = 1, fill, draw] () at (v3) {};
\node[circle, inner sep = 1.5, fill = white, draw] () at (v4) {};
\node[circle, inner sep = 1.5, fill = white, draw] () at (v5) {};
\node[circle, inner sep = 1.5, fill = white, draw] () at (SE) {};
\node[circle, inner sep = 1.5, fill = white, draw] () at (NW) {};
\node[circle, inner sep = 1.5, fill = white, draw] () at (u) {};
\node[circle, inner sep = 1.5, fill = white, draw] () at (w) {};
\node () at (-1.5*\s, -0.4*\s) {$6$-face};
\end{tikzpicture}}
\caption{Some local structures around $5$-face.}
\label{}
\end{figure}

\begin{lemma}\label{SPECIAL6FACE1}
Let $f = [x_{1}x_{2}x_{3}x_{4}x_{5}]$ be a $5$-face adjacent to two $3$-faces, that are either $[x_{1}x_{2}x_{6}]$ and $[x_{2}x_{3}x_{7}]$, or $[x_{1}x_{2}x_{6}]$ and $[x_{3}x_{4}x_{8}]$ (see \autoref{N49FIVE1} and \autoref{N49FIVE2}). If $d(x_{1}) = 3$, $d(x_{5}) \geq 3$ and $d(x_{6}) \geq 3$, and $x_{5}x_{1}x_{6}$ is incident with a $6^{-}$-face $g$, then $g$ is a $6$-face $[x_{5}x_{1}x_{6}uvw]$, where $\{u, w\} \cap \{x_{1}, x_{2}, \dots, x_{8}\} = \emptyset$, $v = x_{4}$ and $d(x_{4}) \geq 4$ ($d(x_{4}) \geq 5$ for the case of \autoref{N49FIVE2}). 
\end{lemma}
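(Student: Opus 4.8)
The plan is to run a short case analysis on $d(g)\in\{3,4,5,6\}$, discarding the first three values and extracting the precise structure when $d(g)=6$ from \autoref{SIXFACE} together with the hypotheses $d(x_{5})\ge 3$ and $d(x_{6})\ge 3$. First I would pin down the local picture: since $d(x_{1})=3$, the edges at $x_{1}$ are exactly $x_{1}x_{2},x_{1}x_{5},x_{1}x_{6}$, and as $G$ is $2$-connected (\autoref{TWOCONNECTED}) the three faces at $x_{1}$ are pairwise distinct, each bounded by a cycle; these are $f$ (between $x_{1}x_{2}$ and $x_{1}x_{5}$), the $3$-face $[x_{1}x_{2}x_{6}]$ (between $x_{1}x_{2}$ and $x_{1}x_{6}$), and $g$ (between $x_{1}x_{5}$ and $x_{1}x_{6}$). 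Hence the boundary cycle of $g$ passes through $x_{5},x_{1},x_{6}$ consecutively, so $g=[x_{5}x_{1}x_{6}\dots]$ with $3\le d(g)\le 6$.

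Next I would rule out $d(g)\le 5$. If $d(g)=3$ then $g=[x_{5}x_{1}x_{6}]$ shares the edge $x_{1}x_{6}$ with the $3$-face $[x_{1}x_{2}x_{6}]$, i.e.\ $G$ has two adjacent triangles, impossible for $G\in\mathcal{G}_{4,9}$. If $d(g)=4$ then the boundary of $g$ is a $4$-cycle, again impossible. If $d(g)=5$, write $g=[x_{5}x_{1}x_{6}ab]$; then $f$ and $g$ are adjacent $5$-faces with $d(x_{1}),d(x_{5})\ge 3$, so \autoref{FIVEFACE} forbids a $3$-face on either $x_{2}x_{3}$ or $x_{3}x_{4}$ --- contradicting the existence of $[x_{2}x_{3}x_{7}]$ in the case of \autoref{N49FIVE1}, or of $[x_{3}x_{4}x_{8}]$ in the case of \autoref{N49FIVE2}. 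Therefore $d(g)=6$, and we may write $g=[x_{5}x_{1}x_{6}uvw]$; being a face boundary cycle, $u,v,w$ are distinct from $x_{1},x_{5},x_{6}$ and from each other.

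Now I would invoke \autoref{SIXFACE} for the adjacent faces $f$ and $g$, where the ``$u,p,q,w$'' of that lemma are our $x_{6},u,v,w$: it gives $w\notin\{x_{1},\dots,x_{5}\}$ and either $u=x_{2}$ with $v\notin\{x_{2},x_{3},x_{4}\}$, or $v=x_{4}$ with $u\notin\{x_{2},x_{3},x_{4}\}$. The first alternative is impossible: if $u=x_{2}$, then at $x_{6}$ both the $3$-face $[x_{1}x_{2}x_{6}]$ and $g$ lie between the edges $x_{6}x_{1}$ and $x_{6}x_{2}$, forcing $d(x_{6})=2$ and contradicting $d(x_{6})\ge 3$. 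Hence $v=x_{4}$, and then $u,w\notin\{x_{1},\dots,x_{6}\}$. It remains to exclude $u,w$ from $x_{7}$ (resp.\ $x_{8}$) and to bound $d(x_{4})$. In the case of \autoref{N49FIVE1}: $u=x_{7}$ makes $[x_{2}x_{6}x_{7}]$ a triangle adjacent to $[x_{1}x_{2}x_{6}]$, and $w=x_{7}$ makes $[x_{3}x_{4}x_{7}]$ a triangle adjacent to $[x_{2}x_{3}x_{7}]$, both forbidden; then $x_{3},x_{5},u,w$ are four distinct neighbours of $x_{4}$, so $d(x_{4})\ge 4$. In the case of \autoref{N49FIVE2}: $u=x_{8}$ produces the $4$-cycle $[x_{2}x_{6}x_{8}x_{3}]$ and $w=x_{8}$ the $4$-cycle $[x_{3}x_{8}x_{5}x_{4}]$, both forbidden, and $x_{8}\notin\{x_{3},x_{5}\}$ (the latter gives a chord of $f$); hence $x_{3},x_{5},u,w,x_{8}$ are five distinct neighbours of $x_{4}$ and $d(x_{4})\ge 5$. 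This establishes all the assertions.

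The bulk of the argument --- eliminating $d(g)\le 5$ and checking the vertex coincidences --- consists of one-line appeals to the absence of $4$-cycles, $9$-cycles and adjacent triangles, and to \autoref{FIVEFACE}. The one delicate point, and the place the hypothesis $d(x_{6})\ge 3$ is essential, is the $d(g)=6$ step: \autoref{SIXFACE} on its own still permits $u=x_{2}$, which must be excluded by the degree argument at $x_{6}$; after that, each potential coincidence of a vertex of $g$ with $x_{6},x_{7}$ or $x_{8}$ must be traced to a concrete forbidden small cycle or adjacent triangle pair.
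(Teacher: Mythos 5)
Your proof is correct, but it is organized differently from the paper's. The paper proves this lemma from scratch: it first uses $d(x_5),d(x_6)\ge 3$ and the absence of $4$-cycles to show $x_1,\dots,x_7,u,w$ are distinct, kills the $5$-face possibility with an explicit $9$-cycle $[x_5x_4x_3x_7x_2x_1x_6uw]$, and in the $6$-face case rules out $v\notin\{x_2,x_3,x_4\}$ by another explicit $9$-cycle and $v=x_2,x_3$ by explicit $4$-cycles, doing only the \autoref{N49FIVE1} configuration and leaving the rest (``quite similar'', the $d(x_4)$ bounds, the $x_7$/$x_8$ exclusions) implicit. You instead delegate the $9$-cycle constructions to the already-proved \autoref{FIVEFACE} and \autoref{SIXFACE}: \autoref{FIVEFACE} disposes of $d(g)=5$ because it forbids a $3$-face on $x_2x_3$ or $x_3x_4$, and \autoref{SIXFACE} pins down the $6$-face up to its two alternatives. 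The price of this reuse is that \autoref{SIXFACE} still allows the branch $u=x_2$, which you must kill separately; your rotation/corner argument at $x_6$ is exactly where $d(x_6)\ge 3$ enters, and it is the same planarity fact the paper uses when it asserts $x_1,x_2,x_6,u$ are distinct, just made explicit. Your route buys shorter, lemma-driven case analysis and an explicit treatment of the \autoref{N49FIVE2} configuration and of the conclusions $\{u,w\}\cap\{x_1,\dots,x_8\}=\emptyset$ and $d(x_4)\ge 4$ (resp.\ $\ge 5$), which the paper only states; the paper's direct construction buys independence from the earlier structural lemmas. The only bits you gloss over are routine distinctness checks (e.g.\ $x_3\ne x_6$, $x_2\ne x_8$ in the coincidence subcases), each a one-line $4$-cycle argument of the kind you use elsewhere and at the same level of detail the paper itself allows.
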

\begin{proof}
We only consider the case of \autoref{N49FIVE1} here, the case of \autoref{N49FIVE2} is quite similar. Suppose that $g = [x_{5}x_{1}x_{6}u\dots w]$. Since $d(x_{5}) \geq 3$ and $d(x_{6}) \geq 3$, $x_{1}, x_{2}, x_{6}, u$ are four distinct vertices, and $x_{1}, x_{4}, x_{5}, w$ are four distinct vertices. As there is no $4$-cycle in $G$, $x_{1}, x_{2}, \dots, x_{7}, u, w$ are distinct. It follows that $g$ must be a $5$- or $6$-face. If $g$ is a $5$-face, then $g = [x_{5}x_{1}x_{6}uw]$ and $[x_{5}x_{4}x_{3}x_{7}x_{2}x_{1}x_{6}uw]$ is a $9$-cycle, a contradiction. Let $g = [x_{5}x_{1}x_{6}uvw]$ be a $6$-face. If $v \notin \{x_{2}, x_{3}, x_{4}\}$, then $[uvwx_{5}x_{4}x_{3}x_{2}x_{1}x_{6}]$ is a $9$-cycle, a contradiction. If $v = x_{2}$, then $[ux_{6}x_{1}x_{2}]$ is a $4$-cycle, a contradiction. If $v = x_{3}$, then $[ux_{6}x_{2}x_{3}]$ is a $4$-cycle, a contradiction. Hence, $v = x_{4}$ and $[x_{4}x_{5}w]$ is a triangle. 
\end{proof}

\begin{lemma}\label{GOOD}
Let $f = [x_{1}x_{2}x_{3}x_{4}x_{5}]$ be a $5$-face adjacent to two $3$-faces $[x_{1}x_{2}x_{6}]$ and $[x_{3}x_{4}x_{8}]$. If $d(x_{2}) = 3$, $d(x_{3}) \geq 4$ and $d(x_{6}) \geq 3$, then $x_{3}x_{2}x_{6}$ is incident with a $7^{+}$-face.
\end{lemma}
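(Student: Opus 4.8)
The plan is to argue by contradiction. Write $g$ for the face incident with the path $x_3x_2x_6$: since $d(x_2)=3$, the edges at $x_2$ are exactly $x_1x_2$, $x_2x_3$, $x_2x_6$, so $g$ is the unique face of the corner between $x_2x_3$ and $x_2x_6$, and $g$ is distinct from $f$ and from $[x_1x_2x_6]$. By \autoref{TWOCONNECTED} the boundary of $g$ is a cycle, so $d(g)\ge 3$; I assume for contradiction that $d(g)\le 6$. Throughout I will use the routine fact (from the absence of $4$-cycles) that $x_1,\dots,x_6$ are pairwise distinct and $x_6\notin V(f)$, since otherwise the triangle $[x_1x_2x_6]$, together with $f$ itself or with a chord of $f$, produces a $4$-cycle.

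First I would dispose of the small values of $d(g)$. If $d(g)=3$, then $g=[x_3x_2x_6]$ is a triangle sharing the edge $x_2x_6$ with $[x_1x_2x_6]$, so $[x_1x_2x_3x_6]$ is a $4$-cycle, which is impossible. If $d(g)=4$, then $g$ itself is a $4$-cycle, impossible. If $d(g)=5$, then $f$ and $g$ are two adjacent $5$-faces whose common edge is $x_2x_3$; I would relabel the vertices of $f$ so that $x_2x_3$ is the common edge ``$x_5x_1$'' of \autoref{FIVEFACE} with $x_3$ in the role of ``$x_1$'' (hence $x_4$ in the role of ``$x_2$''). Since $d(x_3)\ge 4\ge 3$ and $d(x_2)=3\ge 3$, \autoref{FIVEFACE} applies, and because the edge $x_3x_4$ is incident with the $3$-face $[x_3x_4x_8]$, the lemma forces $x_3$ to be a $3$-vertex, contradicting $d(x_3)\ge 4$.

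The remaining case $d(g)=6$ is the heart of the argument. Here $g$ is a $6$-cycle adjacent to the $5$-face $f$ along $x_2x_3$, so \autoref{SIXFACE} applies, again with $f$ relabelled so that $x_2x_3$ is its ``$x_5x_1$'' and $x_3$ plays the role of ``$x_1$''. This yields $g=[x_2x_3\,u\,p\,q\,x_6]$ with $u\notin V(f)$, and exactly one of (A) $p=x_4$ or (B) $q=x_1$. In case (B), $g=[x_2x_3\,u\,p\,x_1x_6]$, so both $x_1x_6$ and $x_2x_6$ lie on $\partial g$ and on $\partial[x_1x_2x_6]$; hence these are the only two edges at $x_6$, i.e.\ $d(x_6)=2$, contradicting $d(x_6)\ge 3$. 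In case (A), $x_3ux_4$ is a subpath of $\partial g$ and $x_3x_4\in E(f)$, so $[x_3ux_4]$ is a triangle; if $u\ne x_8$ then $[x_3ux_4]$ and $[x_3x_4x_8]$ are distinct triangles on the edge $x_3x_4$, giving the $4$-cycle $[x_3ux_4x_8]$, impossible; and if $u=x_8$ then $f$, $[x_3x_4x_8]$ and $g$ are three faces meeting pairwise along $x_2x_3$, $x_3x_4$, $x_3x_8$, which exhausts the rotation at $x_3$, so $d(x_3)=3$, again contradicting $d(x_3)\ge 4$. Hence $d(g)\ge 7$, as claimed.

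The main obstacles I anticipate are (i) choosing the relabellings in \autoref{FIVEFACE} and \autoref{SIXFACE} so that the two prescribed $3$-faces land exactly in the positions where those lemmas are informative, and (ii) the two ``pinched'' subcases of $d(g)=6$, where a small face wraps tightly around a $3$-face and one must notice that this pins down the degree of $x_6$ (to $2$) or of $x_3$ (to $3$) and so contradicts a degree hypothesis; everything else is bookkeeping with the no-$4$-cycle condition.
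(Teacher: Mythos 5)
Your proof is correct. It shares the paper's skeleton---take the unique face $g$ at the corner $x_{3}x_{2}x_{6}$ (well defined since $d(x_{2})=3$) and rule out $d(g)\in\{3,4,5,6\}$---but the execution is routed differently: the paper argues inline, first proving that $x_{1},\dots,x_{6},x_{8}$ and the two outer boundary vertices of $g$ are pairwise distinct (this is exactly where $d(x_{3})\geq 4$ and $d(x_{6})\geq 3$ are used) and then exhibiting an explicit $9$-cycle when $d(g)=5$ and a $9$- or $4$-cycle for each position of the far vertex when $d(g)=6$, whereas you delegate that work to \autoref{FIVEFACE} and \autoref{SIXFACE} (which is where the no-$9$-cycle hypothesis enters your argument, implicitly) and close each case with a degree contradiction: $d(x_{3})=3$ against $d(x_{3})\geq 4$ in the $5$-face case and in the subcase $u=x_{8}$, and $d(x_{6})=2$ against $d(x_{6})\geq 3$ when $q=x_{1}$. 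Your rotation arguments (two distinct faces sharing two edges that meet at a vertex pin down its degree) are sound and are precisely the facts the paper packs into its opening distinctness claim, and your $4$-cycle $[x_{3}ux_{4}x_{8}]$ is the paper's $[wx_{3}x_{8}x_{4}]$ in different labels. What your route buys is economy: \autoref{GOOD} is reduced to the two adjacency lemmas plus local bookkeeping, avoiding a re-run of the forbidden-cycle analysis, at the cost of the careful relabelling needed to apply those lemmas, which you carry out correctly; the paper's proof is self-contained but repeats reasoning already present in \autoref{FIVEFACE} and \autoref{SIXFACE}.
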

\begin{proof}
Suppose that $x_{3}x_{2}x_{6}$ is incident with a face $g = [x_{3}x_{2}x_{6}u\dots w]$. Since $d(x_{3}) \geq 4$ and $d(x_{6}) \geq 3$, we have that $x_{2}, x_{3}, x_{4}, x_{8}, w$ are five distinct vertices, and $x_{1}, x_{2}, x_{6}, u$ are four distinct vertices. Since there are no $4$-cycles, we have that $x_{1}, x_{2}, \dots, x_{6}, x_{8}, u, w$ are distinct. It follows that $g$ must be a $5^{+}$-face. If $g$ is a $5$-face, then $g = [x_{3}x_{2}x_{6}uw]$ and $[x_{3}x_{8}x_{4}x_{5}x_{1}x_{2}x_{6}uw]$ is a $9$-cycle, a contradiction. Let $g$ be a $6$-face $[x_{3}x_{2}x_{6}uvw]$. If $v \notin \{x_{1}, x_{4}, x_{5}\}$, then $[uvwx_{3}x_{4}x_{5}x_{1}x_{2}x_{6}]$ is a $9$-cycle, a contradiction. If $v = x_{1}$, then $[ux_{6}x_{2}x_{1}]$ is a $4$-cycle, a contradiction. If $v = x_{4}$, then $[wx_{3}x_{8}x_{4}]$ is a $4$-cycle, a contradiction. If $v = x_{5}$, then $[ux_{6}x_{1}x_{5}]$ is a $4$-cycle, a contradiction. Therefore, $x_{3}x_{2}x_{6}$ is incident with a $7^{+}$-face. 
\end{proof}

\begin{lemma}\label{GOODFACE}
Let $f = [x_{1}x_{2}x_{3}\dots]$ be a $7^{+}$-face. If $x_{2}$ is a normal $3$-vertex, then at most one of $x_{1}x_{2}$ and $x_{2}x_{3}$ is incident with a good $5$-face. 
\end{lemma}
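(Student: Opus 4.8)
The plan is to argue by contradiction. Suppose both $x_1x_2$ and $x_2x_3$ were incident with good $5$-faces. Since the $7^+$-face $f$ already lies on one side of each of these edges, the good $5$-faces lie on the other side; call them $g_1$ (at $x_1x_2$) and $g_2$ (at $x_2x_3$). As $x_2$ is a normal $3$-vertex, let $z$ be its third neighbour. Then $g_1\ne g_2$, for otherwise this common face would contain both $x_1x_2$ and $x_2x_3$ and so equal $f$; and since $d(x_2)=3$, both $g_1$ and $g_2$ are incident with the edge $x_2z$. Write $g_1=[x_1,x_2,z,p_1,p_2]$ and $g_2=[x_3,x_2,z,q_1,q_2]$ for the boundary cycles.

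The next step is to locate the internal $3$-face witnessing that $g_1$ is good. Its common edge with $g_1$ is not $x_1x_2$ (which borders $f$, a $7^+$-face) nor $x_2z$ (which borders $g_2$, a $5$-face), so it is one of $zp_1$, $p_1p_2$, $p_2x_1$. Assuming $d(z)\ge 3$, apply \autoref{FIVEFACE} to the adjacent pair $(g_1,g_2)$, whose common edge $x_2z$ has both endpoints of degree at least $3$: it rules out a $3$-face at $p_1p_2$ and at $p_2x_1$, and, by the evident symmetry of its ``moreover'' clause applied at the endpoint $z$, it forces the $3$-face to sit at $zp_1$, with $d(z)=3$ and apex the neighbour of $z$ on $g_2$; that is, the $3$-face is the triangle $T^{*}=[z,p_1,q_1]$ wedged between $g_1$ and $g_2$ at $z$. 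By the symmetric argument $g_2$ is also witnessed by $T^{*}$, so $d(x_2)=d(z)=3$ and the three faces at $z$ are precisely $g_1,T^{*},g_2$. (If instead $d(z)=2$, then $z\in\{x,y\}$ and $g_1,g_2$ must share the entire $2$-path joining $x_2$ to the other neighbour of $z$; a short direct inspection then yields a $4$-cycle, so this degenerate case is dispatched at the outset.)

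With this local picture fixed, the goal is to produce a cycle of length exactly $9$ and invoke $G\in\mathcal{G}_{4,9}$. The third edge $p_1q_1$ of $T^{*}$ borders a face $h^{*}\notin\{f,g_1,g_2,T^{*}\}$; since $G$ has no $4$-cycle, $h^{*}$ is neither a $3$-face (no two adjacent triangles) nor a $4$-face, hence a $5^+$-face, and \autoref{FIVEFACE} likewise forces the faces across $p_1p_2,p_2x_1$ (of $g_1$) and $q_1q_2,q_2x_3$ (of $g_2$) to be $5^+$-faces. One first checks, using the absence of $4$-cycles repeatedly, that $x_1,x_2,x_3,q_2,q_1,p_1,p_2$ are pairwise distinct, so the boundary walk of $g_1\cup T^{*}\cup g_2$ is a genuine $7$-cycle. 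Rerouting it through $h^{*}$ then gives the contradiction: when $h^{*}$ is a $5$-face $[p_1,q_1,c_1,c_2,c_3]$, the closed walk $z\,p_1\,c_3\,c_2\,c_1\,q_1\,q_2\,x_3\,x_2\,z$ is a $9$-cycle; when $|h^{*}|\ge 6$, one uses \autoref{SIXFACE} (together with the surrounding $5^+$-faces and, where needed, the fact that $d(f)\ge 7$ permits going the long way round $f$) to force $h^{*}$ to overlap $g_1$ or $g_2$ in a longer arc, again collapsing the configuration to a short cycle forbidden by $G\in\mathcal{G}_{4,9}$.

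The main obstacle is exactly this last step: extracting a cycle of length \emph{precisely} $9$ rather than $\ge 10$, and disposing of the degenerate coincidences ($h^{*}=f$, or a vertex of $h^{*}$ coinciding with one of $p_2,x_1,q_2,x_3$, etc.), each of which I expect to reduce either to a forbidden $4$-cycle or to a forbidden $9$-cycle after one further application of \autoref{FIVEFACE} or \autoref{SIXFACE}. The structural reduction in the first two paragraphs, by contrast, is routine once \autoref{FIVEFACE} is available.
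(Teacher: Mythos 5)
Your first two paragraphs reproduce the paper's structural analysis correctly: applying \autoref{FIVEFACE} to the pair $g_{1},g_{2}$ sharing the edge $x_{2}z$, you rightly conclude that the witnessing internal $3$-face is $T^{*}=[zp_{1}q_{1}]$, that $d(z)=3$, and that the three faces incident with $z$ are exactly $g_{1},T^{*},g_{2}$, all internal. But the way you then try to finish is a genuine gap, and in fact the wrong kind of argument. At the point you have reached, the paper concludes in one line: since every face incident with $z$ is internal, $z$ does not lie on the outer face, so $z\notin\{x,y\}$; hence $x_{2}z$ is an edge joining two adjacent normal $3$-vertices, contradicting \autoref{a3}. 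You never invoke \autoref{a3}, and instead attempt to extract a forbidden $9$-cycle through the face $h^{*}$ across $p_{1}q_{1}$ — a step you yourself flag as ``the main obstacle'' and do not carry out.

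This last step cannot be repaired by cycle-exclusion alone. The cycles contained in $g_{1}\cup T^{*}\cup g_{2}$ have lengths $3,5,6,7,8$, and a $9$-cycle appears only if $h^{*}$ happens to be a $5$-face; nothing forces that, and if $h^{*}$ and the other surrounding faces are, say, $7$- or $10^{+}$-faces, the configuration is perfectly compatible with the absence of $4$- and $9$-cycles. The contradiction in this lemma is a reducibility contradiction — \autoref{a3} depends on $G$ being a minimum counterexample — not a girth-type contradiction, so the missing ingredient is precisely that lemma rather than more applications of \autoref{FIVEFACE} or \autoref{SIXFACE}. (A minor additional point: in the degenerate case $d(z)=2$ you claim a $4$-cycle, which does not follow; that case is excluded because then $z\in\{x,y\}$ and the boundary edge $xy$ would have to lie on one of the internal faces $g_{1},g_{2}$, whereas it lies on $f_{0}$ — this is essentially how the paper rules out $z$ being a $2$-vertex via \autoref{delta}.)
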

\begin{proof}
Suppose to the contrary that $x_{1}x_{2}$ is incident with a good $5$-face $g_{1} = [x_{1}x_{2}v_{3}v_{4}v_{5}]$ and $x_{2}x_{3}$ is incident with a good $5$-face $g_{2} = [x_{3}x_{2}v_{3}u_{4}u_{5}]$. Note that $g_{1}$ and $g_{2}$ are all internal faces. By \autoref{delta}, $v_{3}$ cannot be a $2$-vertex. By \autoref{FIVEFACE}, $g_{1}$ and $g_{2}$ are normally adjacent. Moreover, $v_{3}$ is a $3$-vertex, and $g_{3} = [v_{3}v_{4}u_{4}]$ is an internal $3$-face. It is observed that $g_{1}, g_{2}$ and $g_{3}$ are all internal faces. It follows that $v_{3}$ does not belong to $\{x, y\}$, but this contradicts \autoref{a3}. 
\end{proof}

Let $\tau(\rightarrow f)$ be the number of charges that $f$ receives from other elements. 
\begin{claim}\label{1F2V1M}
If $f$ is an internal $5$-face and $s_f=1$, then $\tau(\rightarrow f) \geq \frac{1}{3}$.
\end{claim}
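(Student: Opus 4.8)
Write $f = [x_{1}x_{2}x_{3}x_{4}x_{5}]$ and let $T = [x_{1}x_{2}x_{6}]$ be the unique internal $3$-face adjacent to $f$; so, up to relabelling, $T$ shares the edge $x_{1}x_{2}$ with $f$ and none of $x_{2}x_{3},x_{3}x_{4},x_{4}x_{5},x_{5}x_{1}$ lies on an internal $3$-face. Since $G$ has no $4$-cycle it has no $4$-face, so every face adjacent to $f$ is an internal $3$-, $5$-, $6$-, or $7^{+}$-face, or the outer face $f_{0}$.

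The plan is to dispose of a few easy situations and then reduce to a purely structural statement. First, if $f$ shares an edge with $f_{0}$ then $f$ receives $\frac{1}{3}$ by \ref{R6}, and if $f$ is adjacent to an internal $7^{+}$-face then $f$ is a good $5$-face and receives $\frac{1}{3}$ by \ref{RRR7}; so I may assume every face adjacent to $f$ is an internal $3$-, $5$-, or $6$-face. Next, the other face incident with the edge $x_{1}x_{2}$ is $f$ itself, so if $x_{6}\in\{x,y\}$ then \ref{R5} sends $\frac{1}{3}$ to $f$, and if $d(x_{6})\ge 6$ then \ref{R4} sends $\frac{1}{3}$ to $f$; likewise if some $x_{i}\in\{x,y\}$ with $1\le i\le 5$ then \ref{R5} sends $\frac{1}{3}$ from $x_{i}$ to $f$. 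Hence I may assume $x_{1},\dots,x_{6}$ are normal with degree at least $3$ and $d(x_{6})\le 5$. A short bookkeeping check then shows that in this situation the only charge $f$ can receive comes from $5^{+}$-vertices among $x_{1},\dots,x_{6}$: each such vertex of degree exactly $5$ contributes precisely $\frac{1}{6}$ to $f$ (for $x_{i}$ with $i\le 5$ this is the $4^{+}$-face clause of \ref{R3}, and such a vertex is never ``sandwiched'' with $f$ in the middle because at most one of the two faces of $f$ at $x_{i}$ is a $3$-face; for $x_{6}$ it is the ``other face'' clause of \ref{R3}), while a vertex of degree at least $6$ among $x_{1},\dots,x_{5}$ contributes $\frac{1}{3}$ by \ref{R4}. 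Therefore it suffices to prove that either some $x_{i}$ with $1\le i\le 5$ has degree at least $6$, or at least two of $x_{1},\dots,x_{6}$ have degree at least $5$.

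To establish this I would first pin down the faces around $f$. By \autoref{FIVEFACE}, if a $5$-face is adjacent to $f$ then the two edges of $f$ at the vertex opposite the shared edge avoid $3$-faces; since $x_{1}x_{2}$ lies on $T$, a $5$-face cannot be the face across $x_{3}x_{4}$ or across $x_{4}x_{5}$, so these two faces are $6$-faces. Moreover, again by \autoref{FIVEFACE}, if the face across $x_{5}x_{1}$ (resp. across $x_{2}x_{3}$) is a $5$-face then $x_{1}$ (resp. $x_{2}$) is a $3$-vertex; as $T$ contains no two adjacent $3$-vertices by \autoref{a3}, at least one of these two faces is also a $6$-face. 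I would then apply \autoref{SIXFACE} to each $6$-face incident with $f$: such a face, sharing one edge of $f$, must pass through exactly one further, prescribed vertex of $f$, and this forces that vertex to acquire new neighbours. Feeding these forced incidences into \autoref{a3} applied to $T$ (at most one of $x_{1},x_{2},x_{6}$ is a $3$-vertex), \autoref{TC1} (if $T$ is a minor $(3,4,4)$-triangle then every $3$-vertex adjacent to a vertex of $T$ lies in $\{x,y\}$, so $x_{3}$ and $x_{5}$ are then $4^{+}$-vertices), \autoref{TRIANGULAREDGE}, and the absence of $4$- and $9$-cycles to rule out degenerate identifications (for instance, a vertex of an adjacent $6$-face coinciding with $x_{6}$ would create a $4$-cycle or force $d(x_{6})=2$), one checks that in each admissible assignment of degrees the required two $5^{+}$-vertices (or single $6^{+}$-vertex) must appear, giving $\tau(\rightarrow f)\ge 2\times\frac{1}{6}=\frac{1}{3}$.

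The main obstacle is this last step: organising the case analysis over the degrees of $x_{1},\dots,x_{6}$ and over which of the faces across $x_{5}x_{1}$ and $x_{2}x_{3}$ are $5$- or $6$-faces, keeping careful track of which vertex of $f$ each adjacent $6$-face must meet and excluding the degenerate coincidences, while being scrupulous that a $5$-vertex of $f$ contributes exactly $\frac{1}{6}$ and not more. I expect a workable organisation is to split first on whether $x_{6}$ is the (at most one) $5$-vertex, and then within each branch on the position of the extra shared vertex guaranteed by \autoref{SIXFACE} for the $6$-faces across $x_{3}x_{4}$ and $x_{4}x_{5}$.
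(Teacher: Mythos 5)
Your opening reductions are sound and parallel the paper's (handle adjacency to $f_{0}$ or to an internal $7^{+}$-face via \ref{R6}/\ref{RRR7}, handle $\{x,y\}$ and $6^{+}$-vertices via \ref{R5}/\ref{R4}, and use \autoref{FIVEFACE} to force the faces across $x_{3}x_{4}$ and $x_{4}x_{5}$ to be $6$-faces). But the crux is missing: you reduce the claim to ``some $x_{i}$ with $i\le 5$ has degree at least $6$, or two of $x_{1},\dots,x_{6}$ have degree at least $5$,'' and then only assert that a case analysis based on \autoref{SIXFACE}, \autoref{a3}, \autoref{TC1}, \autoref{TRIANGULAREDGE} and the absence of $4$- and $9$-cycles ``checks'' this. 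That step is not carried out, and it cannot be completed by those means: none of the cited lemmas forces any $5^{+}$-vertex (if all of $x_{1},\dots,x_{6}$ are $4$-vertices, the triangle $[x_{1}x_{2}x_{6}]$ is not even minor, so \autoref{TC1} and \autoref{TC2} say nothing), so there is no mechanism producing the charge you are counting on. The statement you reduced to is only ``vacuously'' true because the residual configuration is impossible, and proving that impossibility is exactly the content you left open.

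What actually closes the argument in the paper is a face argument at the edge $x_{2}x_{3}$ (or symmetrically $x_{5}x_{1}$), not a vertex-degree argument. Applying \autoref{SIXFACE} to the $6$-face across $x_{3}x_{4}$ shows it has the form $[x_{3}x_{4}upx_{2}w]$: it wraps around through $x_{2}$, and its vertex $w$ is adjacent to both $x_{2}$ and $x_{3}$. Then the face across $x_{2}x_{3}$ cannot be an internal $3$-face (since $s_{f}=1$), cannot be a $4$-face, cannot be a $5$-face (the ``moreover'' clause of \autoref{FIVEFACE} would force $d(x_{2})=3$, incompatible with the extra neighbour $w$), and cannot be a $6$-face (together with the path $x_{2}x_{1}x_{5}x_{4}x_{3}$ along $f$ it would yield a $9$-cycle); so either $[x_{2}x_{3}w]$ is the outer face (with $d(w)=2$) or $x_{2}x_{3}$ lies on an internal $7^{+}$-face, and in both cases $f$ receives $\tfrac13$ by \ref{R6} or \ref{RRR7}. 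In other words, the decisive step is to show that one of the edges $x_{3}x_{4}$, $x_{4}x_{5}$, $x_{2}x_{3}$ must lie on a special face, contradicting the very assumption (``no adjacent $7^{+}$-face or outer face'') under which you are hunting for $5^{+}$-vertices; your proposed final case analysis aims at the wrong target and, as written, leaves a genuine gap.
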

\begin{proof}
Let $f = [v_{1}v_{2}v_{3}v_{4}v_{5}]$ be an internal $5$-face, and let $[v_{1}v_{2}v_{6}]$ be an internal $3$-face. Since $f$ has no chord, $v_{1}, v_{2}, \dots, v_{6}$ are six distinct vertices. If $v_{i} \in \{x, y\}$ for any $1 \leq i \leq 6$, then $v_{i}$ sends $\frac{1}{3}$ to $f$ by \ref{R5}, we are done. Assume $\{v_{1}, v_{2}, \dots, v_{6}\} \cap \{x, y\} = \emptyset$. By \autoref{delta}, $d(v_{i}) \geq 3$ for $1 \leq i \leq 6$. 

Next, we show that $f$ is adjacent to a special face. By the hypothesis, neither $v_{3}v_{4}$ nor $v_{4}v_{5}$ is incident with an internal $4^{-}$-face. By \autoref{FIVEFACE}, neither $v_{3}v_{4}$ nor $v_{4}v_{5}$ is incident with a $5$-face. If $v_{3}v_{4}$ or $v_{4}v_{5}$ is incident with an internal $7^{+}$-face or $f_0$, we are done. So we may assume that each of $v_{3}v_{4}$ and $v_{4}v_{5}$ is incident with an internal $6$-face. By \autoref{SIXFACE}, $v_{3}v_{4}$ is incident with a $6$-face $[v_{3}v_{4}upv_{2}w]$. If $[v_{2}v_{3}w]$ bounds a $3$-face, then $d(w) = 2$ and $v_{2}v_{3}$ is incident with the outer face $[v_{2}v_{3}w]$, we are done. Hence, we can assume that $v_{2}v_{3}$ is not incident with a $3$-face. By \autoref{FIVEFACE}, $v_{2}v_{3}$ cannot be incident with a $5$-face. Since there are no $9$-cycles, $v_{2}v_{3}$ cannot be incident with a $6$-face. Hence, $v_{2}v_{3}$ is incident with a $7^{+}$-face. Therefore, $f$ is adjacent to at least one special face in any case. By \ref{R6} and \ref{RRR7}, $f$ receives $\frac{1}{3}$ from each adjacent special face, thus $\tau(\rightarrow f) \geq \frac{1}{3}$. 
\end{proof}

\begin{claim}\label{2F1M}
Let $f$ be an internal $5$-face and $s_f=2$. If $f$ is incident with one minor $3$-vertex, then $\tau(\rightarrow f) \geq \frac{1}{3}$.
\end{claim}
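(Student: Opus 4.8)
The plan is to reduce \autoref{2F1M} to a short list of local pictures around $f$, disposing of the ``cheap'' cases first. Write $f=[v_1v_2v_3v_4v_5]$ and let $T_1$, $T_2$ be the two internal $3$-faces adjacent to $f$. If a vertex of $f$, or an apex of $T_1$ or $T_2$, lies in $\{x,y\}$, then by \ref{R5} that vertex sends $\tfrac13$ to $f$ --- for an apex $t$ of a $3$-face whose base edge lies on $f$ this is the ``other face incident with $uw$'' clause of \ref{R5} --- and we are done. Also $f$ is a good $5$-face since $s_f\ge1$, so if $f$ is adjacent to $f_0$ or to an internal $7^+$-face then \ref{R6} or \ref{RRR7} already supplies $\tfrac13$. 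So from now on I assume none of these vertices is in $\{x,y\}$ (hence all have degree $\ge 3$ by \autoref{delta}) and every face adjacent to $f$ is a $3$-, $5$- or $6$-face (there are no $4$-faces because $G\in\mathcal{G}_{4,9}$).

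Next I would locate the minor $3$-vertex $z$ of $f$. As $G$ has no adjacent triangles, $z$ lies on a unique internal $3$-face, which --- $f$ having no chord --- must share an edge with $f$, hence is $T_1$ or $T_2$; say $z\in V(T_1)$ with $T_1=[v_1v_2v_6]$ and $z\in\{v_1,v_2\}$. Since $z$ is the only minor $3$-vertex, while any degree-$3$ vertex of $f$ lying on $T_1$ or $T_2$ would also be minor, all other vertices of $T_1$ and $T_2$ (and the two vertices of $f$ on $T_2$) have degree $\ge 4$; by \autoref{a3} the two non-$z$ vertices of $T_1$ do too. Up to the reflection fixing the edge of $T_1$, there are exactly two positions for $T_2$: it shares a vertex of $f$ with $T_1$ (the picture of \autoref{N49FIVE1}) or it is one edge away (that of \autoref{N49FIVE2}). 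The accounting is: among $v_1,\dots,v_5$, $v_6$ and the apex of $T_2$, each $5$-vertex sends at least $\tfrac16$ to $f$ by \ref{R3} (directly if it lies on $f$, through the $3$-face clause if it is an apex of $T_1$ or $T_2$), and each $6^+$-vertex sends $\tfrac13$ by \ref{R4}; so it is enough to find one $6^+$-vertex of this kind, or two $5^+$-vertices of this kind, or an adjacent special face.

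The two workhorses are \autoref{GOOD} and \autoref{SPECIAL6FACE1}. If $z=v_2$ and $T_2$ is one edge away with $v_3\in V(T_2)$, then $d(v_2)=3$, $d(v_3)\ge 4$, $d(v_6)\ge 3$, so \autoref{GOOD} forces the face across $v_2v_3$ from $f$ to be a $7^+$-face --- contradicting the reduction, hence $f$ already gets $\tfrac13$. If $z=v_1$, I would apply \autoref{SPECIAL6FACE1} to the face $g$ across $v_1v_5$ from $f$: either $g$ is a $7^+$-face (again $f$ gets $\tfrac13$), or $g=[v_5v_1v_6uv_4w]$ is a $6$-face with $d(v_4)\ge 4$, and indeed $d(v_4)\ge 5$ in the picture of \autoref{N49FIVE2}. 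One then studies the face $h$ across $v_4v_5$ from $f$: it is not a $3$-face (since $s_f=2$), \autoref{FIVEFACE} forbids it from being a $5$-face (the configuration would put a $3$-face on an edge of $f$ that \autoref{FIVEFACE} forbids, or force $d(v_4)=3$), and if $h$ is a $6$-face then \autoref{SIXFACE} makes $h$ contain $v_4$ together with a neighbour $u'$ of $v_4$ spanning a triangle on the edge of $f$ carrying $T_2$; since $G$ has no $4$-cycle that triangle equals $T_2$, which forces $h=T_2$, a contradiction. So $g$ is a $7^+$-face here too. The remaining branches ($z=v_1$ with $T_2$ one edge away; $z=v_2$ with $T_2$ sharing a vertex) run on the same two lemmas plus the same ``collapse a triangle sitting on a shared edge via the absence of $4$-cycles'' move, iterated once more on the face across $v_3v_4$ if only a single nearby $5^+$-vertex has turned up, with \autoref{TC1} and \autoref{TC2} excluding stray $3$-vertices along the way.

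The main obstacle is the bookkeeping rather than any one step. The delicate points are: (a) splitting correctly on which of $T_1,T_2$ carries $z$ and on the position of $T_2$, since \autoref{GOOD} and \autoref{SPECIAL6FACE1} apply to different pictures; (b) checking in every branch that the faces invoked (across $v_1v_5$, $v_2v_3$, $v_4v_5$, $v_3v_4$) are distinct from $T_1,T_2$ and from one another --- this is where $s_f=2$ and the absence of $4$- and $9$-cycles get used over and over; and (c) making sure each branch finishes with $f$ receiving a full $\tfrac13$, not merely $\tfrac16$, which in the tightest branches needs a second pass of the collapse step. Each individual verification is routine; the real risk is in missing a case.
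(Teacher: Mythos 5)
Your skeleton is the paper's: clear the cases where $x$ or $y$ lies on $f$ or on $T_1,T_2$ via \ref{R5}, then show $f$ is adjacent to a special face, using \autoref{GOOD} when the minor $3$-vertex is next to the second triangle and \autoref{SPECIAL6FACE1} otherwise. The genuine gap is in the decisive step where, after \autoref{SPECIAL6FACE1} forces $g=[v_5v_1v_6uv_4w]$, you exclude that the face $h$ across $v_4v_5$ is a $6$-face. \autoref{SIXFACE} does not place the wrap triangle on ``the edge of $f$ carrying $T_2$'': it only says $h$ meets $f$ in one further vertex, which may be $v_3$ (giving a triangle on $v_3v_4$) or $v_1$ (giving a triangle on $v_1v_5$). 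In the configuration where $T_1$ and $T_2$ share the vertex $v_2$ (the paper's Case~1), neither $v_3v_4$ nor $v_1v_5$ carries $T_2$, so your ``that triangle equals $T_2$'' move is simply unavailable there; and even when $T_2$ sits on $v_3v_4$ and the wrap is at $v_3$, identifying the new triangle with $T_2$ only identifies its apex, i.e.\ $h$ would be a $6$-face sharing the two edges of $T_2$ at that apex --- concluding ``which forces $h=T_2$'' is a non sequitur. What actually finishes these cases is the extra structure \autoref{SPECIAL6FACE1} provides: $[v_4v_5w]$ is a triangle of $G$ that bounds no face (a face there would be a third $3$-face adjacent to $f$, impossible since $s_f=2$ and $v_4,v_5\notin\{x,y\}$), so $h$ lies inside this triangle while $v_1$ and $v_3$ lie strictly outside it (alternatively, the rotation at $v_4$ has no room for $u$ and $w$), which contradicts the conclusion of \autoref{SIXFACE}; this is exactly the paper's terse ``$[x_4x_5w]$ is a triangle but it does not bound a $3$-face, thus $x_4x_5$ is incident with a $7^{+}$-face.'' You also never address the wrap-at-$v_1$ alternative of \autoref{SIXFACE} at all (it dies because $d(v_1)=3$, so the only faces at $v_1$ are $f$, $T_1$ and $g$).

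Two smaller points. Your blanket claim that all vertices of $T_1,T_2$ other than $z$ have degree at least $4$ is not justified for the apex of $T_2$ (a $3$-vertex there is not incident with $f$, so it is not excluded by the hypothesis); fortunately you never need it. And the ``remaining branches'' are only gestured at, with an iteration onto the face across $v_3v_4$ and appeals to \autoref{TC1}/\autoref{TC2} that the paper's argument for this claim does not require; as written this is a recipe, not a proof. So the approach matches the paper, but the $6$-face exclusion as you argue it is wrong, and the uncovered branches leave the proof incomplete.
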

\begin{proof}
Assume that $f = [x_{1}x_{2}x_{3}x_{4}x_{5}]$. If $x$ or $y$ is incident with $f$ or one of the adjacent $3$-faces, then it sends at least $\frac{1}{3}$ to $f$ by \ref{R5}. So we may assume that neither $x$ nor $y$ is incident with $f$ or the adjacent $3$-faces. Now we show that $f$ is adjacent to at least one $7^+$-face sending $\frac{1}{3}$ to $f$ by \ref{R6} and \ref{RRR7}.

\textbf{Case 1.} Let $[x_{1}x_{2}x_{6}]$ and $[x_{2}x_{3}x_{7}]$ be internal $3$-faces, and let $x_{1}$ be a minor $3$-vertex. By \autoref{delta} and \autoref{a3}, $d(x_{5}) \geq 4$ and $d(x_{6}) \geq 4$. By \autoref{SPECIAL6FACE1}, if $x_{5}x_{1}x_{6}$ is incident with a $6^{-}$-face, then $[x_{4}x_{5}w]$ is a triangle but it does not bound a $3$-face, thus $x_{4}x_{5}$ is incident with a $7^{+}$-face. Hence, either $x_{5}x_{1}x_{6}$ or $x_{4}x_{5}$ is incident with a $7^{+}$-face. 

\textbf{Case 2.} Let $[x_{1}x_{2}x_{6}]$ and $[x_{3}x_{4}x_{8}]$ be internal $3$-faces, and let $x_{1}$ be a minor $3$-vertex. By \autoref{delta}, \autoref{a3} and \autoref{SPECIAL6FACE1}, we also get that either $x_{5}x_{1}x_{6}$ or $x_{4}x_{5}$ is incident with a $7^{+}$-face. 

\textbf{Case 3.} Let $[x_{1}x_{2}x_{6}]$ and $[x_{3}x_{4}x_{8}]$ be internal $3$-faces, and let $x_{2}$ be a minor $3$-vertex. By \autoref{delta} and \autoref{a3}, $d(x_{3}) \geq 4$ and $d(x_{6}) \geq 4$. By \autoref{GOOD}, $x_{2}x_{3}$ is incident with a $7^{+}$-face. 
\end{proof}

\begin{claim}\label{2F2M}
Let $f$ be an internal $5$-face and $s_f\ge2$. If $f$ is incident with two minor $3$-vertices, then $\tau(\rightarrow f) \geq 1$.
\end{claim}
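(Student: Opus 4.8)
The plan is a structural case analysis on $f=[x_{1}x_{2}x_{3}x_{4}x_{5}]$: I want to show that in every situation $f$ is incident with at least three edges lying on a \emph{special} face (an internal $7^{+}$-face or $f_{0}$), so that $f$ collects $3\times\frac{1}{3}=1$ from \ref{R6} and \ref{RRR7}; the few degenerate positions of $x$ or $y$ are handled separately, where each of $x,y$ already contributes $\frac{1}{3}$ by \ref{R5}.

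First I would pin down the configuration. The two minor $3$-vertices are normal $3$-vertices, so by \autoref{a3} they are non-adjacent on the $5$-cycle $f$ and may be relabelled $x_{1},x_{3}$; by \autoref{cor-1} they are the only normal $3$-vertices of $f$. Since each of $x_{1},x_{3}$ has degree $3$ and is incident with an internal $4^{-}$-face, which in $\mathcal{G}_{4,9}$ can only be a $3$-face, and this $3$-face shares an $f$-edge incident to that vertex, $f$ is adjacent to a $3$-face ``at $x_{1}$'' and a $3$-face ``at $x_{3}$'', and these are distinct (otherwise $x_{1},x_{3}$ would be adjacent normal $3$-vertices lying on one $3$-face, contradicting \autoref{a3}). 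Up to the reflection of $f$ fixing $\{x_{1},x_{3}\}$ there are three arrangements of these two $3$-faces, matching \autoref{N49FIVE1}, \autoref{N49FIVE2} (and one further arrangement of the same type as the latter): namely $\{[x_{1}x_{2}x_{6}],[x_{2}x_{3}x_{7}]\}$, $\{[x_{1}x_{2}x_{6}],[x_{3}x_{4}x_{8}]\}$, and $\{[x_{5}x_{1}x_{6}],[x_{3}x_{4}x_{8}]\}$. In each, the vertex of $f$ between $x_{1}$ and $x_{3}$ (namely $x_{2}$) is forced to have degree at least $4$, for otherwise it would be a third normal $3$-vertex of $f$, violating $t_{f}\le 2$.

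The core step is to locate three special faces around $f$ in each arrangement. Near each of $x_{1}$ and $x_{3}$ I would apply \autoref{SPECIAL6FACE1} (after relabelling so the local picture is that of \autoref{N49FIVE1} or \autoref{N49FIVE2}, and using \autoref{GOOD} where it is more convenient) to the face incident with the ``third'' corner at that vertex: it is either an internal $7^{+}$-face, which then shares an edge with $f$, or the exceptional $6$-face of \autoref{SPECIAL6FACE1}. The key point is that the exceptional $6$-face cannot occur here: in the \autoref{N49FIVE2}-type arrangements it would force a neighbouring vertex to have degree at least $5$, clashing with that vertex being the degree-$3$ vertex $x_{1}$ or $x_{3}$; and in the remaining arrangement it would wrap $f$, that $6$-face, and the triangle it creates around $x_{5}$, forcing $d(x_{5})=3$ and hence making $x_{5}$ a third minor $3$-vertex, impossible since $t_{f}\le 2$ (here one uses $G\in\mathcal{G}_{4,9}$ to see that the created $3$-cycle actually bounds a face). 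Thus both corner faces at $x_{1}$ and $x_{3}$ are internal $7^{+}$-faces, each sending $\frac{1}{3}$ to $f$. For the one remaining edge of $f$ I would exclude an adjacent $5$-face using \autoref{FIVEFACE} and an adjacent $6$-face using \autoref{SIXFACE} — each would force one of the already-identified edges to be triangular, contradicting its face being a $7^{+}$-face — so that edge also lies on an internal $7^{+}$-face or on $f_{0}$. This supplies the third special face, whence $\tau(\rightarrow f)\ge 1$; any contribution of $x$ or $y$ via \ref{R5} is then a bonus.

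I expect the main obstacle to be carrying out this core step uniformly over the three arrangements while checking the degree hypotheses of \autoref{FIVEFACE}, \autoref{SIXFACE}, \autoref{SPECIAL6FACE1}, and \autoref{GOOD}, and separating out the borderline cases in which some named vertex is a $2$-vertex in $\{x,y\}$, where a structural lemma fails to apply but \ref{R5} supplies the missing charge directly. A secondary subtlety is the assertion that the exceptional $6$-face really forces a third minor $3$-vertex: this requires the $3$-cycle it produces to be a genuine face rather than a non-facial $3$-cycle, which is precisely where the hypotheses $G\in\mathcal{G}_{4,9}$ enter.
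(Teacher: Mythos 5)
Your plan---securing three edges of $f$ on special faces so that \ref{R6} and \ref{RRR7} alone deliver $3\times\frac{1}{3}=1$---does not survive the configurations this claim must cover, and the exclusion arguments it rests on are flawed. First, in the arrangement with $3$-faces $[x_{1}x_{2}x_{6}]$ and $[x_{2}x_{3}x_{7}]$ (your first arrangement, the paper's Case~1), \autoref{SPECIAL6FACE1} in its \autoref{N49FIVE1} form only gives $d(x_{4})\ge 4$, and your reason for ruling out the exceptional $6$-face---that the created $3$-cycle $[x_{4}x_{5}w]$ must bound a face, whence $d(x_{5})=3$---is unjustified: nothing in $\mathcal{G}_{4,9}$ forces a $3$-cycle to be facial, its interior may carry further structure, and so $d(x_{5})$ is not forced to be $3$. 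Second, in the arrangement $[x_{1}x_{2}x_{6}]$, $[x_{3}x_{4}x_{8}]$ with minor vertices $x_{1},x_{3}$ (the paper's Case~3), the exceptional $6$-face at the corner $x_{5}x_{1}x_{6}$ forces $d(x_{4})\ge 5$, and $x_{4}$ is not one of your $3$-vertices, so there is no clash at all; the paper keeps this possibility alive and instead cashes $x_{4}$ in as a special vertex via \ref{R3}--\ref{R5}. Third, your treatment of the ``remaining edge'' only excludes an adjacent $5$-face and $6$-face; it cannot exclude a third internal $3$-face there, and the claim is explicitly invoked in the discharging with $s_f=3$ (the case $d=5$, $s_f=3$, $t_f=2$), where three of the five edges of $f$ lie on internal $3$-faces, so three special faces simply cannot exist.

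In exactly these situations the paper completes the count $\tau(\rightarrow f)\ge 1$ by adding charge from special \emph{vertices} located through \autoref{TC1} and \autoref{TC2} (a $5^{+}$-vertex or a vertex of $\{x,y\}$ sending at least $\frac{1}{6}$ by \ref{R3}--\ref{R5}), on top of only two special faces. Your proposal treats vertex contributions as an optional bonus and has no mechanism for producing them, so in the scenarios above your budget tops out at $\frac{2}{3}$. Repairing the argument would require reinstating the vertex-charge component, which in effect reproduces the paper's case analysis rather than replacing it with a purely face-based count.
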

\begin{proof}
Assume $f = [x_{1}x_{2}x_{3}x_{4}x_{5}]$. If $x_{i}$ is a $2$-vertex, then $x_{i} \in \{x, y\}$ and $x_{i-1}x_{i}x_{i+1}$ is incident with the outer face $f_{0}$. By \ref{R5}, $f$ receives $\frac{1}{3}$ from each of $x$ and $y$. By \ref{R6}, $f$ receives $\frac{1}{3}$ via each of $x_{i-1}x_{i}$ and $x_{i}x_{i+1}$. Thus, $\tau(\rightarrow f) \geq 2\times \frac{1}{3} + 2 \times \frac{1}{3} > 1$. So we may assume that $d(x_{i}) \geq 3$ for any $1 \leq i \leq 5$. Denote the adjacent face incident with $x_ix_{i+1}$ by $g_i$ for $i\in\{1, 2, 3, 4, 5\}$.

\textbf{Case 1.} Let $[x_{1}x_{2}x_{6}]$ and $[x_{2}x_{3}x_{7}]$ be internal $3$-faces,  and let $x_{1}$ and $x_{3}$ be minor $3$-vertices. Suppose that $x_{6}$ is a $2$-vertex. It follows that $\{x_{2}, x_{6}\} = \{x, y\}$ and $g_5=f_{0}$. By \ref{R5}, $f$ receives $\frac{1}{3}$ from each of $x_{2}$ and $x_{6}$. By \ref{R6}, $f$ receives at least $\frac{1}{3}$ from the outer face $f_{0}$. Thus, $\tau(\rightarrow f) \geq 3 \times \frac{1}{3} = 1$. 

So we may assume that $d(x_{6}) \geq 3$, and by symmetry, $d(x_{7}) \geq 3$. Firstly, we claim that $f$ receives at least $\frac{1}{3}$ from $\{x_{2}, x_{6}, x_{7}\}$. If $x_{2}$ is a special vertex, then $f$ receives $\frac{1}{3}$ from $x_{2}$ by \ref{R3}, \ref{R4} and \ref{R5}. So we may assume that $x_{2}$ is a normal $4$-vertex. It follows from \autoref{TC1} that both $x_{6}$ and $x_{7}$ are special vertices. By \ref{R3}, \ref{R4} and \ref{R5}, $f$ receives at least $\frac{1}{6}\times2=\frac{1}{3}$ from $x_{6}$ and $x_{7}$.

Next, we show that $f$ is adjacent to at least two special faces. Since $f$ receives at least $2 \times \frac{1}{3}=\frac{2}{3}$ from adjacent special faces by \ref{R6} and \ref{RRR7}, we are done. By \autoref{SPECIAL6FACE1}, we get that both $g_3$ and $g_5$ are $6^+$-faces, and $g_3$, $g_5$ cannot be $6$-face simultaneously. If both $g_3$ and $g_5$ are $7^+$-faces, then we are done. By symmetry, assume that $g_{5}$ is a $6$-face and $g_{3}$ is a $7^+$-face. It follows that $g_4$ is the outer $3$-face or a $7^+$-face. That is, $g_3$ and $g_4$ are the special faces, we are done.

\textbf{Case 2.} Let $[x_{1}x_{2}x_{6}]$ and $[x_{3}x_{4}x_{8}]$ be internal $3$-faces,  and let $x_{1}$ and $x_{4}$ be minor $3$-vertices. Similar to Case~1, we may assume that $d(x_{6}) \geq 3$ and $d(x_{8}) \geq 3$. Note that $x_{1}$ and $x_{4}$ are $3$-vertices. Since there are no $4$-cycles, neither $g_{4}$ nor $g_{5}$ is a $4^{-}$-face. By \autoref{FIVEFACE}, neither $g_{4}$ nor $g_{5}$ is a $5$-face. By \autoref{SIXFACE} and \autoref{SPECIAL6FACE1}, neither $g_{4}$ nor $g_{5}$ is a $6$-face. So both $g_{4}$ and $g_{5}$ are $7^{+}$-faces. Thus, $f$ receives at least $\frac{1}{3}\times2=\frac{2}{3}$ from these $7^{+}$-faces. Next we show that $f$ will receive at least $\frac{1}{3}$ from others.

If $g_{2}$ is a $7^{+}$-face, then we are done. By \autoref{SIXFACE}, $g_{2}$ cannot be a $6$-face. Assume $g_{2}$ is a $5$-face. By \autoref{FIVEFACE}, $d(x_{2}) = d(x_{3}) = 3$. By \autoref{a3}, we have that $\{x_{2}, x_{3}\} = \{x, y\}$. By \ref{R5}, $f$ receives $\frac{1}{3}$ from each of $x_{2}$ and $x_{3}$, we are done. It is clear that $g_{2}$ cannot be a $4$-face. Suppose that $g_{2}$ is a $3$-face $[x_{2}x_{3}x_{7}]$. If there is one special vertex in $\{x_{2}, x_{3}\}$, then we are done by \ref{R3}, \ref{R4} and \ref{R5}. So we may assume that both $x_{2}$ and $x_{3}$ are normal $4$-vertices. By \autoref{TC1} and \autoref{TC2}, at least two of $x_{6}, x_{7}$ and $x_{8}$ are special vertices, thus $f$ receives at least $2 \times \frac{1}{6} = \frac{1}{3}$ from these special vertices, we are done.

\textbf{Case 3.} Let $[x_{1}x_{2}x_{6}]$ and $[x_{3}x_{4}x_{8}]$ be internal $3$-faces, and let $x_{1}$ and $x_{3}$ be minor $3$-vertices. Similar to Case~1, assume $d(x_{6}) \geq 3$ and $d(x_{8}) \geq 3$. By \autoref{TC1}, one of $\{x_{2}, x_{6}\}$ is a special vertex. By \ref{R3}, \ref{R4} and \ref{R5}, $f$ receives at least $\frac{1}{6}$ from $\{x_{2}, x_{6}\}$.

Since there are no $4$-cycles, we have that $g_{2}$ cannot be a $4^{-}$-face. Suppose that $g_{2}$ is a $5$-face. By \autoref{FIVEFACE}, we have that $d(x_{2}) = d(x_{3}) = 3$. By \autoref{a3}, $x_{2}$ belongs to $\{x, y\}$. As a consequence, $\{x_{2}, x_{6}\} = \{x, y\}$ and $g_{2}$ is the outer face $f_{0}$. By \ref{R5} and \ref{R6}, $\tau(\rightarrow f) \geq 2 \times \frac{1}{3} + \frac{1}{3} = 1$, we are done. By \autoref{SIXFACE}, $g_{2}$ cannot be a $6$-face. Thus, we may assume that $g_{2}$ is a $7^{+}$-face. By \ref{RRR7}, $f$ receives $\frac{1}{3}$ from $g_2$. 

Next we show that $f$ receives at least $\frac{1}{2}$ from others. By \autoref{FIVEFACE} and \autoref{SIXFACE}, $g_{4}$ cannot be a $5$- or $6$-face. Thus, $g_{4}$ is a $3$- or $7^{+}$-face. Suppose that $g_{4}$ is a $3$-face $[x_{4}x_{5}x_{9}]$. If $x_{9}$ is a $2$-vertex, then $\{x, y\} \subset \{x_{4}, x_{5}, x_{9}\}$, and then $f$ receives at least $2 \times \frac{1}{3}>\frac{ 1}{2}$ from $x$ and $y$ by \ref{R5}. So we may assume that $d(x_{9}) \geq 3$. By \autoref{FIVEFACE} and \autoref{SIXFACE}, $g_{5}$ is a $7^{+}$-face sending $\frac{1}{3}$ to $f$. By \autoref{TC1}, there is a special vertex in $\{x_{4}, x_{5}, x_{8}, x_{9}\}$ sending at least $\frac{1}{6}$ to $f$. Thus, $f$ receives at least $\frac{1}{6} +\frac{1}{3} =\frac{ 1}{2}$ from $g_5$ and the special vertex. Suppose that $g_{4}$ is a $7^{+}$-face. If $g_{5}$ is also a $7^{+}$-face, then $f$ receives at least $2 \times \frac{1}{3}>\frac{ 1}{2}$ from $g_4$ and $g_5$, we are done. So we may assume that $g_{5}$ is a $6^{-}$-face. By \autoref{SPECIAL6FACE1}, $d(x_{4}) \geq 5$. By \ref{R3}, \ref{R4} and \ref{R5}, $f$ receives at least $\frac{1}{6}$ from $x_{4}$. Therefore, $f$ still receives at least $\frac{1}{6} +\frac{1}{3} =\frac{ 1}{2}$ from $g_4$ and $x_4$. 
\end{proof}

\begin{claim}\label{2F2V1M}
Let $f$ be an internal $5$-face and $s_f=2$. If $t_f=2$, and exactly one of the two normal $3$-vertices is minor, then $\tau(\rightarrow f) \geq \frac{1}{2}$.
\end{claim}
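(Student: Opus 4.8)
The plan is to show that $f$ gathers $\frac{1}{3}$ from a special face incident with one of its edges, together with a further $\frac{1}{6}$ coming either from a second special face or from a special vertex. Since $f$ sends out $\frac{2}{3}$ to its two adjacent internal $3$-faces by \ref{R1}, $\frac{1}{2}$ to the minor $3$-vertex and $\frac{1}{3}$ to the non-minor $3$-vertex by \ref{R2}, receiving $\frac{1}{2}$ in total is exactly what $\tau(\rightarrow f)\ge\frac{1}{2}$ asserts. First I would deal with $x$ and $y$: if either lies on $f$ or is the apex of one of the two adjacent $3$-faces, then by \ref{R5} it sends $\frac{1}{3}$ to $f$; if both do, then $\tau(\rightarrow f)\ge\frac{2}{3}$ and we are done, and if exactly one does, we already have $\frac{1}{3}$ in hand and the argument below will supply the remaining $\frac{1}{6}$. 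So assume no vertex of $f$ or of the two adjacent $3$-faces lies in $\{x,y\}$; by \autoref{delta} each of them has degree at least $3$.

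Next I would classify the configuration. Writing $f=[x_1x_2x_3x_4x_5]$, up to symmetry the two adjacent internal $3$-faces are either $[x_1x_2x_6]$ and $[x_2x_3x_7]$ (Case~A, sharing the vertex $x_2$) or $[x_1x_2x_6]$ and $[x_3x_4x_8]$ (Case~B). Using that a degree-$3$ vertex incident with an internal $3$-face is minor, that $G$ has no two edge-sharing triangles and no chord of $f$, and \autoref{a3}, one checks that in Case~A the minor $3$-vertex may be taken to be $x_1$ and the other $3$-vertex to be $x_4$, forcing $d(x_2),d(x_3),d(x_5),d(x_6)\ge4$; and in Case~B the minor $3$-vertex may be taken to be $x_2$, the other being $x_5$, forcing $d(x_1),d(x_3),d(x_4),d(x_6)\ge4$ while $d(x_5)=3$. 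In both cases, since $G$ has no $4$-cycle and no two edge-sharing triangles, each of the three edges of $f$ not lying on a $3$-face borders an internal $5^{+}$-face or $f_0$.

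To locate the first special face, in Case~A I would apply \autoref{SPECIAL6FACE1}: since $d(x_4)=3$, its conclusion forbids the face on the path $x_5x_1x_6$ from being a $6^{-}$-face, so that face is special; in Case~B, \autoref{GOOD} gives directly that the face incident with the edge $x_2x_3$ is a $7^{+}$-face. Either way $f$ receives $\frac{1}{3}$ by \ref{R6} or \ref{RRR7}. For the missing $\frac{1}{6}$, observe that if some vertex of $f$, or an apex of one of the two adjacent $3$-faces, is a $5^{+}$-vertex, then it sends at least $\frac{1}{6}$ to $f$ by \ref{R3} and \ref{R4} (for an apex, through the clause of those rules governing incident $3$-faces), and we are done; so assume that every such vertex has degree at most $4$, so that together with the bounds above every relevant vertex has degree $3$ or $4$. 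It then remains to examine the at most two edges of $f$ still unaccounted for: each borders an internal $5^{+}$-face or $f_0$, and if one of them borders $f_0$ or an internal $7^{+}$-face then $f$ gains a further $\frac{1}{3}$ and we finish; otherwise these faces are internal $5$- or $6$-faces, and I would derive a contradiction. A bordering $5$-face is impossible by \autoref{FIVEFACE}, since that lemma would force the two edges of $f$ opposite to the shared edge to bound no $3$-face, whereas in our configuration one of them does. A bordering $6$-face must, by \autoref{SIXFACE}, wrap around and touch another vertex of $f$; chasing its boundary edges, and using repeatedly that $G$ has no $4$-cycle, no $9$-cycle and no two edge-sharing triangles, together with the fact that the only face incident with two edges of a triangle is that triangle, one sees that every branch ends in a forbidden configuration. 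Hence $\tau(\rightarrow f)\ge\frac{1}{2}$.

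The step I expect to be the main obstacle is this last one, the elimination of the ``$6$-face wrapping around'' subconfiguration, especially in Case~B: it requires careful bookkeeping of the cyclic rotation at the vertices shared by $f$, the two adjacent $3$-faces and the candidate $6$-faces, and the repeated extraction of a short forbidden cycle or a pair of edge-sharing triangles to dispose of each of the several branches.
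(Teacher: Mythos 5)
Your setup, case split, and the first $\frac{1}{3}$ agree with the paper: in your Case~A the paper also uses \autoref{SPECIAL6FACE1} (via $d(x_4)=3$) to get a $7^{+}$-face or $f_0$ at $x_1x_5$, and in your Case~B it also uses \autoref{GOOD} to get a $7^{+}$-face at $x_2x_3$. The genuine gap is in how you obtain the remaining $\frac{1}{6}$. The paper gets it in one line from the reducibility lemma \autoref{TC1} (together with \autoref{TC2}/\autoref{a3} where needed): if all of the relevant vertices were normal $4^{-}$-vertices, one of the two $3$-faces would be a minor triangle (or would intersect a triangle chain) having a normal $3$-vertex of $f$ at distance one, contradicting \autoref{TC1}; hence some vertex on $f$ or on the adjacent $3$-faces is special and sends at least $\frac{1}{6}$ by \ref{R3}--\ref{R5}. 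You instead try to avoid this and argue purely structurally that the two edges of $f$ at the non-minor $3$-vertex cannot both border internal $5$-/$6$-faces, but this is exactly the step you leave as an assertion (``chasing its boundary edges \dots every branch ends in a forbidden configuration''), and you yourself flag it as the main obstacle. Ruling out the two wrapped $6$-faces of \autoref{SIXFACE} requires a nontrivial enumeration of how their middle vertices can coincide with $x_1,\dots,x_8$, with separate extraction of a $4$-cycle, a $9$-cycle, a degree violation, or an edge lying on three faces in each branch; none of this is carried out, so the heart of your version of the claim is missing. This is not a cosmetic omission: it is precisely the point where the paper's proof leans on the minimal-counterexample lemma \autoref{TC1}, which your sketch does not use.

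A second, concrete hole is your treatment of the case where exactly one of $x,y$ lies on $f$ or is an apex of an adjacent $3$-face. You say the $\frac{1}{3}$ from \ref{R5} plus ``the argument below'' finishes, but the argument below is developed under the assumption that no such vertex is in $\{x,y\}$, and its hypotheses can fail otherwise: in Case~B with $x_3\in\{x,y\}$ and $d(x_3)=3$, \autoref{GOOD} is inapplicable (it needs $d(x_3)\geq 4$), so your ``first special face'' disappears and you are left needing $\frac{1}{6}$ from the unproven structural step. The paper handles exactly this subcase explicitly: by \autoref{a3} a $3$-vertex $x_3$ must lie in $\{x,y\}$ and then sends $\frac{1}{3}$ by \ref{R5}, while \autoref{TC1} supplies $\frac{1}{6}$ from $\{x_1,x_6\}$. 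To repair your write-up you would either have to carry out the full face-chasing elimination (including the mixed $\{x,y\}$ configurations and the degenerate $2$-vertex situations the paper disposes of at the start of each case), or simply import the paper's TC1-based special-vertex argument, which is what the discharging here is designed around.
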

\begin{proof}
Assume $f= [x_{1}x_{2}x_{3}x_{4}x_{5}]$. By the definition of normal $3$-vertex and minor $3$-vertex, we only need to consider two cases.

\textbf{Case 1.} Let $[x_{1}x_{2}x_{6}]$ and $[x_{2}x_{3}x_{7}]$ be internal $3$-faces, and let $x_{1}$ and $x_{4}$ be normal $3$-vertices. If $x_{5}$ or $x_{6}$ is a $2$-vertex, then $x_{5}$ or $x_{6}$ belongs to $\{x, y\}$. It follows that $x_{1}x_{5}$ is incident with the outer face $f_{0}$. By \ref{R5}, $f$ receives at least $\frac{1}{3}$ from $\{x, y\}$. By \ref{R6}, $f$ receives $\frac{1}{3}$ from the outer face $f_{0}$. Thus, $\tau(\rightarrow f) \geq 2 \times \frac{1}{3} > \frac{1}{2}$. So we may assume that $d(x_{5}) \geq 3$ and $d(x_{6}) \geq 3$. Note that $x_{4}$ is a $3$-vertex. By \autoref{SPECIAL6FACE1}, $x_{1}x_{5}$ cannot be incident with a $6^-$-face. That is, $x_{1}x_{5}$ is incident with a $7^{+}$-face which sends $\frac{1}{3}$ to $f$. On the other hand, by \autoref{TC1}, one vertex in $\{x_{2}, x_{3}, x_{6}, x_{7}\}$ is a special vertex which sends at least $\frac{1}{6}$ to $f$. Thus, $\tau(\rightarrow f) \geq \frac{1}{3} + \frac{1}{6} = \frac{1}{2}$. 

\textbf{Case 2.} Let $[x_{1}x_{2}x_{6}]$ and $[x_{3}x_{4}x_{8}]$ be internal $3$-faces, and let $x_{2}$ and $x_{5}$ be normal $3$-vertices. If $x_{6}$ is a $2$-vertex, then $x_{6} \in \{x, y\}$. Since $x_{2}$ is a normal vertex, $\{x, y\} = \{x_{1}, x_{6}\}$. Thus, $f$ receives $\frac{1}{3}$ from each of $x_{1}$ and $x_{6}$ by \ref{R5}, and thus $\tau(\rightarrow f) \geq \frac{1}{3} + \frac{1}{3} \geq \frac{1}{2}$. Assume $d(x_{6}) \geq 3$. By \autoref{TC1}, at least one of $x_{1}$ and $x_{6}$ is a special vertex. By \ref{R3}, \ref{R4} and \ref{R5}, $f$ receives at least $\frac{1}{6}$ from these special vertices. If $x_{3}$ is a $3$-vertex, then $x_{3} \in \{x, y\}$ by \autoref{a3}. By \ref{R5}, $f$ receives $\frac{1}{3}$ from $x_{3}$. Thus, $\tau(\rightarrow f) \geq \frac{1}{6} + \frac{1}{3} = \frac{1}{2}$. So we may assume that $d(x_{3}) \geq 4$. By \autoref{GOOD}, $x_{2}x_{3}$ is incident with a $7^{+}$-face. By \ref{RRR7}, $f$ receives $\frac{1}{3}$ from each adjacent $7^{+}$-face. Thus, $\tau(\rightarrow f) \geq \frac{1}{6} + \frac{1}{3} = \frac{1}{2}$. 
\end{proof}

\begin{claim}\label{3F}
If $f$ is an internal $5$-face and $s_f=3$, then $\tau(\rightarrow f) \geq \frac{2}{3}$.
\end{claim}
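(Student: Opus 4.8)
The plan is to fix an internal $5$-face $f=[x_{1}x_{2}x_{3}x_{4}x_{5}]$ with $s_f=3$ and prove $\tau(\rightarrow f)\ge\frac23$ by a structural case analysis in the spirit of \autoref{2F2M} and \autoref{2F2V1M}. First I would dispose of the boundary vertices: if some vertex of $f$ or of one of the three adjacent $3$-faces lies in $\{x,y\}$, then by \ref{R5} that vertex sends $\frac13$ to $f$; and if $f$ is incident with a $2$-vertex $x_i$, then $x_i\in\{x,y\}$ and both $x_{i-1}x_i$ and $x_ix_{i+1}$ bound $f_{0}$, so $f$ receives $\frac23$ from $f_{0}$ by \ref{R6} alone (and more from \ref{R5}). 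After handling these situations I may assume every vertex incident with $f$ or with one of the three adjacent $3$-faces has degree at least $3$ (by \autoref{delta}) and avoids $\{x,y\}$, so that \autoref{a3}, \autoref{TC1} and \autoref{TC2} apply freely to them.

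Next I would fix the local picture. Since $G$ has no $4$-cycle, $f$ has no chord and the three triangles are normally adjacent to $f$; hence, up to relabelling, the three triangular edges of $f$ are either $(\mathrm I)$ the three consecutive edges $x_{1}x_{2},x_{2}x_{3},x_{3}x_{4}$, or $(\mathrm{II})$ the edges $x_{1}x_{2},x_{2}x_{3},x_{4}x_{5}$. In $(\mathrm I)$ each of $x_{2},x_{3}$ is incident with two of the adjacent $3$-faces, and since no two triangles share an edge this forces $d(x_{2}),d(x_{3})\ge4$; thus every normal $3$-vertex of $f$ lies in $\{x_{1},x_{4},x_{5}\}$, and a similar restriction holds in $(\mathrm{II})$. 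In both configurations exactly two edges of $f$ carry no triangle -- the \emph{free} edges, namely $x_{4}x_{5},x_{5}x_{1}$ in $(\mathrm I)$ and $x_{3}x_{4},x_{5}x_{1}$ in $(\mathrm{II})$ -- and I write $g_i$ for the face sharing $x_ix_{i+1}$ with $f$.

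The heart of the argument is to analyse the face on each free edge. Such a $g_i$ is not a $3$-face (since $s_f=3$), not a $4$-face (no $4$-cycle), and not a $5$-face: a $5$-face on a free edge would, by \autoref{FIVEFACE}, force $f$ to carry no triangle on the two edges at graph-distance two from that free edge, contradicting the position of the triangular edges in $(\mathrm I)$ or $(\mathrm{II})$. If $g_i$ is an internal $6$-face, \autoref{SIXFACE} pins down its shape; combining this with the absence of $4$-cycles and of two triangles sharing an edge, one checks that the ``middle'' vertex forced onto $g_i$ must be a $4^{+}$-vertex of $f$, so that if either endpoint of the free edge is a $3$-vertex (or an incident $3$-vertex of $f$ is adjacent to the relevant triangle) then, also using \autoref{SPECIAL6FACE1}, $g_i$ cannot be a $6$-face at all and is $7^{+}$ or $f_{0}$. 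Hence, whenever both free edges bound a $7^{+}$-face or $f_{0}$, rules \ref{R6} and \ref{RRR7} already give $\tau(\rightarrow f)\ge2\times\frac13=\frac23$.

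It then remains to treat the case in which a free edge genuinely bounds an internal $6$-face (so its endpoints are $4^{+}$-vertices and $f$ gets nothing across it); the missing charge must come from vertices. Here I use the triangle apices: whenever $f$ carries a $3$-vertex adjacent to one of the three triangles, \autoref{TC1} (together with \autoref{TC2} when two triangles are in play) forces two of the apices to be special, each sending $\frac16$ to $f$ by \ref{R3}--\ref{R5}; otherwise the degree-$\ge4$ vertices $x_{2},x_{3}$ together with a $5^{+}$-vertex forced by \autoref{TC1} again supply $\frac13$, and combined with the $\frac13$ from the one remaining special face this yields $\tau(\rightarrow f)\ge\frac23$. (The sub-case where both free edges bound internal $6$-faces forces two consecutive vertices of $f$ to have degree $\ge4$, limiting the normal $3$-vertices and making the vertex bookkeeping close; the sub-case of two minor $3$-vertices on $f$ is already covered by \autoref{2F2M}, which even gives $\tau(\rightarrow f)\ge1$.) The main obstacle is precisely the analysis of the possible internal $6$-face on a free edge: one must chain \autoref{SIXFACE} and \autoref{SPECIAL6FACE1} with care, checking that the $9$-cycle obstruction is genuinely available in each sub-configuration, and then verify that the $\frac16$ and $\frac13$ contributions always add to $\frac23$ -- the same kind of bookkeeping as in \autoref{2F2M} and \autoref{2F2V1M}, but with three triangles around $f$ it splits into several sub-cases, each routine once the forbidden $4$- and $9$-cycles and the absence of adjacent triangles have pinned down the faces surrounding $f$.
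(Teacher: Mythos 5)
Your reduction to the two configurations, the exclusion of $3$-, $4$- and $5$-faces on the free edges via \autoref{FIVEFACE}, and the observation that two special faces on the free edges give $2\times\frac13$ all match the paper's proof. The genuine gap is what you do when a free edge might bound an internal $6$-face: you concede that this case can occur and try to recover the missing charge from vertices, but in fact this case never occurs, and your vertex-charging patch does not work. By \autoref{SIXFACE}, a $6$-face $g$ on a free edge must pass through $x_2$ or $x_4$, which creates a $3$-cycle through one of the two edges of $f$ adjacent to that free edge. If that adjacent edge is one of the three triangular edges, the absence of $4$-cycles forces the new $3$-cycle's apex to be the apex of the corresponding $3$-face, so $g$ and that $3$-face share \emph{both} edges at the apex, forcing the apex to be a $2$-vertex -- excluded once the degenerate $2$-vertex case is handled via \ref{R5}. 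If the adjacent edge is not triangular (the situation in the consecutive-triangles configuration), the new $3$-cycle would make it a fourth triangular edge of the $5$-cycle, contradicting \autoref{TRIANGULAREDGE}. This is exactly how the paper concludes that both free edges are incident with $7^{+}$-faces or $f_0$, giving $\frac23$ by \ref{R6} and \ref{RRR7} with no vertex contribution needed. Your fallback, by contrast, asserts that \autoref{TC1} ``forces two of the apices to be special'': \autoref{TC1} only yields one special vertex in a prescribed small set per application, and yields nothing at all when no $3$-vertex is present, so when $t_f=0$ (which the claim must cover, since it is stated for all internal $5$-faces with $s_f=3$) your argument delivers only $\frac13$.

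A secondary problem is your opening reduction. A single vertex of $f$ or of an adjacent $3$-face lying in $\{x,y\}$ contributes only $\frac13$ by \ref{R5}, which is not the required $\frac23$, yet you then ``assume'' all relevant vertices avoid $\{x,y\}$; that assumption is not earned. What the paper actually needs (and proves) is only the $2$-vertex case: if an apex such as $x_6$ (or the vertex $x_5$) has degree $2$, then it lies in $\{x,y\}$ by \autoref{delta}, and since $xy$ is an edge the other of $x,y$ is one of its neighbours on $f$ or on the $3$-face, so \emph{both} $x$ and $y$ send $\frac13$ to $f$ by \ref{R5}, giving $\frac23$; in all remaining situations the face-based argument above applies regardless of whether $x$ or $y$ happens to lie on $f$.
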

\begin{proof}
Assume $f = [x_{1}x_{2}x_{3}x_{4}x_{5}]$. According to symmetry, we only need to consider two cases.

\textbf{Case 1.} Let $[x_{1}x_{2}x_{6}]$, $[x_{2}x_{3}x_{7}]$ and $[x_{4}x_{5}x_{9}]$ be internal $3$-faces. Assume $d(x_{6}) = 2$. By \autoref{a3}, $\{x, y\} = \{x_{1}, x_{6}\}$ or $\{x, y\} = \{x_{2}, x_{6}\}$. By \ref{R5}, $f$ receives $\frac{1}{3}$ from each of $x$ and $y$, thus $\tau(\rightarrow f) \geq 2 \times \frac{1}{3} = \frac{2}{3}$. So we may assume that $d(x_{6}) \geq 3$. Similarly, we can assume that $d(x_{7}) \geq 3$ and $d(x_{9}) \geq 3$. It is clear that neither $x_{1}x_{5}$ nor $x_{3}x_{4}$ is incident with a $4^-$-face. By \autoref{FIVEFACE}, neither $x_{1}x_{5}$ nor $x_{3}x_{4}$ is incident with a $5$-face. By \autoref{SIXFACE}, neither $x_{1}x_{5}$ nor $x_{3}x_{4}$ is incident with a $6$-face. Hence, $f$ is adjacent to two $7^{+}$-faces. By \ref{R6} and \ref{RRR7}, $\tau(\rightarrow f) \geq 2 \times \frac{1}{3} = \frac{2}{3}$. 

\textbf{Case 2.} Let $[x_{1}x_{2}x_{6}]$, $[x_{2}x_{3}x_{7}]$ and $[x_{3}x_{4}x_{8}]$ be internal $3$-faces. If $d(x_{i}) = 2$ for $i\in\{5, 6, 7, 8\}$, then $x_{i}\in\{x, y\}$ by \autoref{delta}. Since $x$ and $y$ are adjacent, we have that $\{x, y\} \subset \{x_{1}, x_{2}, \dots, x_{8}\}$. By \ref{R5}, $f$ receives $\frac{1}{3}$ from each of $x$ and $y$, thus $\tau(\rightarrow f) \geq 2 \times \frac{1}{3} = \frac{2}{3}$. So we may assume that $x_{5}, x_{6}, x_{7}$ and $x_{8}$ are all $3^{+}$-vertices. It is clear that neither $x_{4}x_{5}$ nor $x_{1}x_{5}$ is contained in a $4^-$-face. By \autoref{FIVEFACE}, neither $x_{1}x_{5}$ nor $x_{4}x_{5}$ is incident with a $5$-face. Recall that $x_{6}$ is a $3^{+}$-vertex and $x_{4}x_{5}$ is not contained in a triangle. By \autoref{SIXFACE}, $x_{1}x_{5}$ cannot be incident with a $6$-face. Hence, $x_{1}x_{5}$ is incident with a $7^{+}$-face. By symmetry, $x_{4}x_{5}$ is also incident with a $7^{+}$-face. By \ref{R6} and \ref{RRR7}, $\tau(\rightarrow f) \geq 2 \times \frac{1}{3} = \frac{2}{3}$. 
\end{proof}

Now we calculate the final charge of internal $5^+$-faces. Let $f = [v_{1}v_{2} \dots v_{d}]$ be an internal $d$-face for $d\ge5$. By \autoref{TWOCONNECTED}, every face in $G$ is bounded by a cycle. Since there are no $9$-cycles, $d\ne9$. 

If $v_{i}$ is a $2$-vertex, then $v_{i} \in \{x, y\}$ and $v_{i-1}v_{i}v_{i+1}$ is incident with the outer face $f_{0}$. Thus, $f$ is adjacent to at most $d - 2$ internal faces. By \autoref{cor-1}, $t_f\le \frac{d}{2}$. By \ref{R1} and \ref{RRR7}, $f$ sends at most $\frac{1}{3}$ to each adjacent internal face. By \ref{R2}, $f$ sends at most $\frac{1}{2}$ to each incident normal $3$-vertex. By \ref{R5}, $f$ receives $\frac{1}{3}$ from each of $x$ and $y$. By \ref{R6}, $f$ receives $\frac{1}{3}$ via each of $v_{i-1}v_{i}$ and $v_{i}v_{i+1}$. Hence, 
$\mu'(f)\geq d - 4 + 4 \times \frac{1}{3} - (d - 2) \times \frac{1}{3} -\frac{d}{2} \times \frac{1}{2} > 0$.

So we may assume that there is no $2$-vertex incident with $f$.

$\bullet$ $\bm{d=5}$. 

By \autoref{cor-1} and \autoref{TRIANGULAREDGE}, $t_f\le2$ and $s_f\le3$. If $s_f=0$, then $\mu'(f) \geq 5 - 4 - 2 \times \frac{1}{3} > 0$ by \ref{R2}. 

If $s_f=1$, then $f$ is incident with at most one minor $3$-vertex. By \autoref{1F2V1M}, \ref{R1} and \ref{R2}, $\mu'(f) \geq 5 - 4 + \frac{1}{3} - \frac{1}{3} - \left(\frac{1}{2} + \frac{1}{3}\right) > 0$.

Assume $s_f=2$. If $t_f=0$, then $\mu'(f) \geq 5 - 4 - 2 \times \frac{1}{3} > 0$ by \ref{R1}. Let $t_f=1$. If the normal $3$-vertex is not minor, then $\mu'(f) \geq 5 - 4 - 2 \times \frac{1}{3} - \frac{1}{3} = 0$ by \ref{R1} and \ref{R2}. If the normal $3$-vertex is minor, then $\mu'(f) \geq 5 - 4 + \frac{1}{3} - 2 \times \frac{1}{3} - \frac{1}{2} > 0$ by \autoref{2F1M}, \ref{R1} and \ref{R2}. Let $t_f=2$. It is observed that $f$ is incident with at least one minor $3$-vertex. If $f$ is incident with exactly one minor $3$-vertex, then $\mu'(f) \geq 5 - 4 + \frac{1}{2} - 2 \times \frac{1}{3} - (\frac{1}{2} + \frac{1}{3}) = 0$ by \autoref{2F2V1M}, \ref{R1} and \ref{R2}. The other situation, $f$ is incident with exactly two minor $3$-vertices. Thus, $\mu'(f) \geq 5 - 4 + 1 - 2 \times \frac{1}{3} - 2 \times \frac{1}{2} > 0$ by \autoref{2F2M}, \ref{R1} and \ref{R2}. 

Assume $s_f=3$. If $t_f=0$, then $\mu'(f) \geq 5 - 4 - 3 \times \frac{1}{3} = 0$ by \ref{R1}. If $t_f=1$, then $\mu'(f)\geq 5 - 4 + \frac{2}{3} - 3 \times \frac{1}{3} - \frac{1}{2} > 0$ by \autoref{3F}, \ref{R1} and \ref{R2}. If $t_f=2$, then it is incident with two minor $3$-vertices, and then $\mu'(f) \geq 5 - 4 + 1 - 3 \times \frac{1}{3} - 2 \times \frac{1}{2} = 0$ by \autoref{2F2M}, \ref{R1} and \ref{R2}. 

$\bullet$ $\bm{d=6}$. 

Note that there are no $4$-cycle in $G$. If $f$ is adjacent to a $3$-face, then it must be normally adjacent to the $3$-face. Since there are no $9$-cycles in $G$, $f$ is adjacent to at most two $3$-faces. It follows that $f$ is incident with at most two minor $3$-vertices. By \ref{R1} and \ref{R2}, $\mu'(f) \geq 6 - 4 - 2 \times \frac{1}{3} - (2 \times \frac{1}{2} + \frac{1}{3}) = 0$. 

$\bullet$ $\bm{d=7}$. 

If $f$ is adjacent to a $3$-face, then it must be normally adjacent to the $3$-face. Otherwise, there is a $4$-cycle in $G$. Since there are no $9$-cycles in $G$, $f$ is adjacent to at most one $3$-face. It follows that $f$ is incident with at most one minor $3$-vertex. By \autoref{cor-1}, $t_f\le3$. If $t_f=3$, then $f$ is adjacent to at most four good $5$-faces by \autoref{a3} and \autoref{GOODFACE}, and then $\mu'(f) \geq 7 - 4 - (1 + 4) \times \frac{1}{3} - (\frac{1}{2} + 2 \times \frac{1}{3}) > 0$ by \ref{R1}, \ref{R2} and \ref{RRR7}. If $t_f=2$, then $f$ is adjacent to at most five good $5$-faces by \autoref{a3} and \autoref{GOODFACE}, and then $\mu'(f) \geq 7 - 4 - (1 + 5) \times \frac{1}{3} - (\frac{1}{2} + \frac{1}{3}) > 0$ by \ref{R1}, \ref{R2} and \ref{RRR7}. If $t_f=1$, then $f$ is adjacent to at most six good $5$-faces by \autoref{a3} and \autoref{GOODFACE}, and then $\mu'(f) \geq 7 - 4 - (1 + 6) \times \frac{1}{3} - \frac{1}{2} > 0$ by \ref{R1}, \ref{R2} and \ref{RRR7}. If $t_f=0$, then $\mu'(f) \geq 7 - 4 - 7 \times \frac{1}{3} > 0$ by \ref{R1} and \ref{RRR7}. 

$\bullet$ $\bm{d=8}$. 

Similar to the above cases, if $f$ is adjacent to a $3$-face, then it must be normally adjacent to the $3$-face. Since there are no $9$-cycles, $f$ is not adjacent to any $3$-face. Thus, $f$ is not incident with any minor $3$-vertex. By \ref{R2} and \ref{RRR7}, $\mu'(f) \geq 8 - 4 - 8 \times \frac{1}{3} - 4 \times \frac{1}{3} = 0$. 

$\bullet$ $\bm{d\ge10}$. 

By \ref{R1} and \ref{RRR7}, $f$ sends at most $\frac{1}{3}$ via each incident edge. It follows that $\mu'(f) \geq d - 4 - d \times \frac{1}{3} - \frac{d}{2} \times \frac{1}{2} > 0$. 

This completes the proof of \autoref{HP}.

\end{document}